\documentclass[a4paper, 12pt]{amsart}
\setlength{\textheight}{23cm}\setlength{\textwidth}{16cm}\setlength{\oddsidemargin}{0cm}\setlength{\evensidemargin}{0cm}\setlength{\topmargin}{0cm}

\usepackage{amssymb, amsmath, amsthm, color}
\usepackage[colorlinks=true, breaklinks=true, linkcolor=black, citecolor=blue, urlcolor=red]{hyperref} 
\usepackage[english]{babel}
\usepackage{tikz-cd}

\numberwithin{equation}{section}
\setcounter{tocdepth}{2}
\setlength\parindent{0pt}

\newtheorem{lettertheorem}{Theorem}

\newtheorem{lettercorollary}[lettertheorem]{Corollary}

\newtheorem{letterprob}[lettertheorem]{Problem}

\newtheorem{theorem}{Theorem}[section]

\newtheorem{corollary}[theorem]{Corollary}
\newtheorem{proposition}[theorem]{Proposition}
\newtheorem{observation}[theorem]{Observation}
\newtheorem{definition}[theorem]{Definition}
\newtheorem{notation}[theorem]{Notation}
\newtheorem{remark}[theorem]{Remark}
\newtheorem{example}[theorem]{Example}

\newcommand{\act}{\curvearrowright}
\DeclareMathOperator{\ad}{ad}
\newcommand{\al}{\alpha}
\DeclareMathOperator{\Aut}{Aut}
\newcommand{\cC}{\mathcal C}
\newcommand{\C}{\mathbf C}
\newcommand{\fC}{\mathfrak C}
\newcommand{\cF}{\mathcal F}
\DeclareMathOperator{\Frac}{Frac}
\DeclareMathOperator{\FC}{ForC}
\newcommand{\cG}{\mathcal G}
\DeclareMathOperator{\Gr}{Gr}
\newcommand{\Ga}{\Gamma}
\newcommand{\cH}{\mathcal H}
\DeclareMathOperator{\id}{id}
\newcommand{\into}{\hookrightarrow}
\newcommand{\La}{\Lambda}
\newcommand{\la}{\langle}
\DeclareMathOperator{\Leaf}{Leaf}

\DeclareMathOperator{\mcm}{mcm}
\DeclareMathOperator{\Mon}{Mon}
\newcommand{\N}{\mathbf{N}}
\newcommand{\ot}{\otimes}
\newcommand{\Q}{\mathbf{Q}}
\newcommand{\R}{\mathbf{R}}
\newcommand{\ra}{\rangle}
\DeclareMathOperator{\Root}{Root}
\DeclareMathOperator{\Sp}{Spine}
\DeclareMathOperator{\supp}{supp}
\newcommand{\ti}{\tilde}
\newcommand{\cUF}{\mathcal{UF}}
\DeclareMathOperator{\Ver}{Ver}

\newcommand{\Z}{\mathbf{Z}}

\newcommand{\onto}{\twoheadrightarrow}

\setcounter{page}{1}

\begin{document}

\thanks{
AB is supported by the Australian Research Council Grant DP200100067 and a University of New South Wales Sydney Starting Grant.}
\author{Arnaud Brothier}
\address{Arnaud Brothier\\ School of Mathematics and Statistics, University of New South Wales, Sydney NSW 2052, Australia}
\email{arnaud.brothier@gmail.com\endgraf
\url{https://sites.google.com/site/arnaudbrothier/}}

\title[Forest-skein groups II]{Forest-skein groups II: construction from homogeneously presented monoids}

\begin{abstract}
Inspired by the reconstruction program of conformal field theories of Vaughan Jones we recently introduced a vast class of so called forest-skein groups. 
They are built from a skein presentation: a set of colours and a set of pairs of coloured trees.
Each nice skein presentation produces four groups similar to Richard Thompson's group $F,T,V$ and the braided version $BV$ of Brin and Dehornoy.

In this article, we consider forest-skein groups obtained from one-dimensional skein presentations; the data of a homogeneous monoid presentation.
We decompose these groups as wreath products.
This permits to classify them up to isomorphisms. 
Moreover, we prove that a number of properties of the fraction group of the monoid pass through the forest-skein groups such as the Haagerup property, homological and topological finiteness properties, and orderability.
\end{abstract}

\maketitle

\section*{Introduction}
Vaughan Jones has unexpectedly unravelled new connections between subfactors, Richard Thompson's group, conformal field theory, and knot theory in \cite{Jones17}, see also the two surveys \cite{Jones19-survey, Brothier20-survey}.
Inspired by these beautiful links we have initiated a program in the vein of Jones's vision but where the Thompson group is replaced by a collection of so-called {\it forest-skein groups} (in short FS groups) that are constructed from so-called forest-skein (FS) categories \cite{Brothier22-FS}.
These FS categories are in spirit skeletons or discretised versions of certain tensor categories (more precisely Jones' planar algebras) appearing in subfactor theory \cite{Jones99}. 
Moreover, FS categories provide a graphical calculus using trees that resembles the one introduced by Brown to study the Thompson groups \cite{Brown87}. (Note that unpublished manuscripts of Thompson suggests that he was aware of such a description using trees.)
A FS category is the set of binary rooted forests over a vertex-colouring set $S$ that we mod out by some {\it skein relations}. 
These skein relations are expressed by a set $R$ of pairs of coloured trees $(t,s)$ with same number of leaves.
This provides a {\it skein presentation} like the following with two colours $a$ and $b$ and one skein relation
\[\includegraphics{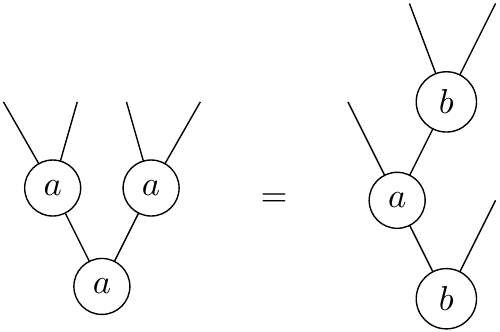}\]
If a FS category $\cF$ admits a {\it calculus of fractions} (is left-cancellative and admits common right-multiples), then we can form a FS group $G_\cF:=\Frac(\cF,1)$ which is the {\it fraction group} of $\cF$. 
Elements of $G_\cF$ are formally of the form $t\circ s^{-1}$ with $t,s$ trees with same number of leaves and where $s^{-1}$ is a formal inverse of $s$.
When such a calculus of fractions exists we say that $\cF$ is a {\it Ore FS category}.
The group $G_\cF$ is similar to the celebrated Richard Thompson group $F$. By adding permutations or braids on top of trees we may form three additional FS groups $G_\cF^T,G_\cF^V,G_\cF^{BV}$ that are analogous to the larger Thompson groups $T,V$ and the braided Thompson group $BV$ (the later was independently discovered by Brin and Dehornoy \cite{Brin-BV1,Dehornoy06}). 
Note that the classical Thompson groups and the braided version are obtained via the skein presentation with one colour and no relations, see \cite{Brown87} or \cite{Cannon-Floyd-Parry96}.

{\bf Properties of FS groups.}
We have proved and observed a number of properties and behaviours of FS groups in \cite{Brothier22-FS}.
For instance, if $X$ is either of the groups $F,T,V,BV$, then $X$ embeds in $G^X_\cF$. In particular, all FS groups are not elementary amenable, have exponential growth, and have infinite geometric dimension (i.e.~all classifying space are infinite dimensional). Moreover, if $X\neq F$, then they all contain a copy of the free group of rank two. 
From a (finite) skein presentation of $\cF$ we can deduce a (finite) group presentation of $G_\cF$.
All FS groups have trivial first $\ell^2$-Betti numbers and an important family of them satisfies the topological finiteness property $F_\infty$ (i.e.~for every $n\geq 1$ there exists a classifying space with finite $n$-skeleton).
Although, we are still far from understanding which kind of groups we may obtain, how much they differ from the Thompson groups, and how much they remember their skein presentation. 
For instance, are they simple groups (or contain {\it obvious} subgroups that are simple), what kind of analytic and geometric properties do they satisfy (do they all have the Haagerup property), what kind of topological properties do they have (are they all either not finitely presented or of type $F_\infty$), do they all have nice dynamical properties like having faithful actions on totally ordered sets?

{\bf Purpose and main result.}
This article has for purpose to construct explicit examples from which we may witness easily which properties they satisfy and have a classification; thus answering some of the question of above. 
We consider skein presentations that are one-dimensional, i.e.~presentations $(S,R)$ where $R$ is a set of pairs of trees $(t,s)$ where $t,s$ are left-vines like the following: $S=\{a,b\}$ and a single skein relation given by the two trees:
\[\includegraphics{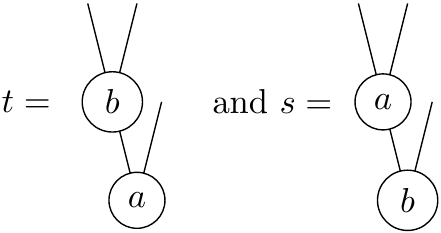}\]
It is not hard to see that these skein presentations are in bijection with {\it homogeneously presented (HP) monoids} (the monoid relations are pairs of words of same length).
The example of above corresponds to the monoid presentation $\Mon\la a,b| ab=ba\ra$: the free abelian monoid in two generators.
We obtain a functor $M\to \cF_M$ from HP monoids to presented FS categories.
Moreover, we observe that $M$ admits a calculus of fractions (we say that $M$ is a {\it Ore monoid}) if and only if $\cF_M$ does. 
This allows us to easily construct a huge class of FS groups using classical monoids.
The main theorem of the paper is an explicit decomposition of the FS group of $\cF_M$ in terms of the fraction group of $M$.
Before stating it we introduce some notations.

Let $V\act \Q_2$ be the usual action of Thompson's group $V$ on the set of dyadic rational in the unit torus $\Q_2:=\Z[1/2]/\Z.$
Extend this action to $BV$ via the canonical map $Q:BV\onto V$.
Elements of $V$ admits right-derivatives that we denote by $v'(q)$ for $v\in V, q\in \Q_2.$
If $w\in BV$, then we write $w'(q)$ for $Q(w)'(q)$.
Finally, $\log_2$  denotes the base-$2$ logarithm, i.e.~$\log_2(2^n)=n$.

\begin{lettertheorem}\label{lettertheo:decomposition}(Theorem \ref{theo:FCWP})
Consider a Ore HP monoid $M=\Mon\la \sigma|\rho\ra$ with fraction group $\Frac(M)$, a generator $a\in \sigma$, the associated FS category $\cF$ and FS groups $G^X$ with $X=F,T,V,BV$.
Write $K_M\lhd \Frac(M)$ for the kernel of the word-length map $\Frac(M)\to\Z.$

There is a group isomorphism
$$\Frac(\cF,1)\simeq K_M\wr_{\Q_2,\tau}X$$
where $K_M\wr_{\Q_2,\tau}X:=\oplus_{\Q_2}K_M\rtimes X$ is the twisted permutational restricted wreath product described by the following formula:
$$(v\cdot k)(q):= a^{-\log_2(v'(v^{-1}q))}\cdot k(v^{-1} q) \cdot a^{\log_2(v'(v^{-1}q))}$$
for $v\in X, k\in \oplus_{\Q_2}K_M, q\in \Q_2.$

In particular, if the generator $a\in\sigma$ is central in $\Frac(M)$, then the twist is trivial.
\end{lettertheorem}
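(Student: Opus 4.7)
The plan is to realise $G^X$ as a semidirect product $\ker\phi \rtimes \iota(X)$ via an explicit forgetful map $\phi$ and section $\iota$, identify $\ker\phi$ with $\oplus_{\Q_2} K_M$, and compute the conjugation action of $X$ to recover the twisting cocycle. The argument runs uniformly for $X\in\{F,T,V,BV\}$ once established for $F$, since the permutation and braid symmetries commute with the colour bookkeeping.

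Since a one-dimensional skein presentation pairs left-vines with the same number of leaves, the colour-forgetting functor $\cF \to \FC$ is well-defined and induces a surjection $\phi\colon G^X \onto X$. For a splitting, consider the $a$-monochromatic coloured trees (every internal vertex decorated by the distinguished generator $a\in\sigma$): skein moves preserve underlying shapes, and no relation of $\rho$ is of the form $a^n=a^n$ unless trivial, so the $a$-monochromatic trees form a sub-FS category of $\cF$ isomorphic to $\FC$. Passing to fractions and adjoining permutations or braids yields an injective homomorphism $\iota\colon X \to G^X$ with $\phi\circ\iota = \id_X$, whence $G^X = \ker\phi\rtimes\iota(X)$.

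The heart of the proof is the identification of $\ker\phi$ with $\oplus_{\Q_2} K_M$. Given $g\in\ker\phi$, choose a representative $[t,s]$ with $|t|=|s|=T$ a common refinement. Each leaf $i$ of $T$ carries words $w_i(t),w_i(s)\in M$ whose common length equals the depth of $i$, so by homogeneity $w_i(t)w_i(s)^{-1}\in K_M$. Each leaf corresponds to a standard dyadic interval of $[0,1]$ and hence (via e.g.\ its midpoint) to an element of $\Q_2$, giving the indexing. The assignment $g\mapsto (q\mapsto w_{i_q}(t)w_{i_q}(s)^{-1})$ must be shown well-defined modulo simultaneous refinement and independent skein moves on $t,s$; this is the main technical step and requires a normal-form theorem for coloured trees of fixed shape modulo $\rho$, exploiting the Ore property of $M$ to pin down canonical per-leaf representatives. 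Surjectivity then follows by building $[t,s]$ from any finitely supported datum, and injectivity from the same normal form.

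For the twist, write $\iota(v)=[t_v,s_v]$ with both factors $a$-monochromatic. The leaf-word of $t_v$ at leaf $q$ is $a^{\mathrm{depth}_{t_v}(q)}$, and similarly $a^{\mathrm{depth}_{s_v}(v^{-1}q)}$ on the source side; the standard formula for derivatives in $F$ (lifted to $BV$) gives $\log_2 v'(v^{-1}q)=\mathrm{depth}_{s_v}(v^{-1}q)-\mathrm{depth}_{t_v}(q)$. Computing $\iota(v)\cdot k\cdot\iota(v)^{-1}$ for $k$ supported at $v^{-1}q$ by composing the tree pairs picks up left and right multiplication by the corresponding powers of $a$ on the leaf-datum, yielding exactly the formula for $\tau$; centrality of $a$ in $\Frac(M)$ kills these conjugations and recovers the untwisted wreath product. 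The principal obstacle is the well-definedness of the kernel map: the naive per-leaf invariant $w_i(t)w_i(s)^{-1}$ can shift under skein moves applied to $t$ or $s$ individually (as one already sees on the three-leaf vine using $ab=ba$, where the relation redistributes colours between leaves of equal depth), so the normal-form theorem mentioned above is essential before any coordinate can be read off from arbitrary representatives.
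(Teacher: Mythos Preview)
Your overall architecture matches the paper: split $G^X$ via the decolouring surjection and the $a$-colouring section, identify the kernel with $\oplus_{\Q_2}K_M$, then compute the conjugation action. The difficulty you isolate is exactly the right one, but your proposed resolution (a normal-form theorem for coloured trees of fixed shape) is neither stated nor proved, and the paper does \emph{not} proceed that way.

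The paper's key idea, which you are missing, is to replace the full root-to-leaf word by the word read along the \emph{maximal left-geodesic} ending at each leaf $\ell$. Since the skein relations of $\cF_M$ come from pairs of left-vines, every skein move rewrites colours along a single left-geodesic and hence changes the mlg-word only by a relation of $M$; as an element of $M$ it is therefore invariant. By contrast, your full root-to-leaf word passes through vertices that lie on many different left-geodesics, so a single skein move alters it in a way not governed by any relation of $M$ (your own $ab=ba$ example shows this: the right leaf's full word jumps from $a$ to $b$). No normal-form argument can repair this, because the quantity you are reading is simply not constant on the equivalence class.

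Two further choices in the paper make everything fit together. First, leaves are indexed by the \emph{left endpoint} of their standard dyadic interval, not the midpoint: adding a caret at $\ell$ produces a left child $\ell_0$ with the same left endpoint and an extended mlg (so $m_{\ell_0}n_{\ell_0}^{-1}=m_\ell n_\ell^{-1}$), and a right child $\ell_1$ with trivial mlg (so it contributes $e$). Your midpoint indexing breaks this, since neither child inherits the parent's midpoint. Second, the paper conjugates the mlg-ratio by $a^{R_\ell}$, where $R_\ell$ is the distance from the root to the origin of the mlg. This normalisation is what makes the conjugation action of $X$ come out as conjugation by $a^{-\log_2(v'(v^{-1}q))}$: when you compose with $[C_a(t),C_a(s)]$ the mlg either stays inside the growing forest (and $R_\ell$ shifts by the depth difference) or extends into the monochromatic part (and the extra letters are all $a$'s, absorbed into the same power). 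Without this conjugation you still get an isomorphism $\ker(D)\to\oplus_{\Q_2}K_M$, but the twist does not take the stated form.
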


The twist of the wreath product is the cocycle map
$$\tau:X\times \Q_2\to \Aut(K_M),\ (v,q)\mapsto \ad(a^{-1})^{\log_2(v'(v^{-1} q))}.$$
We will say that we are in the untwisted case when $\tau(v,q)=\id$ for all $(v,q)\in X\times \Q_2$, i.e.~the generator $a$ is central in $\Frac(M)$.
Note that in general $G^X$ is not isomorphic to the obvious candidate $\Frac(M)\wr_{\Q_2}X$. Indeed, we will prove that the wreath product $K_M\wr_{\Q_2,\tau}X$ completely remembers $K_M$.
Therefore, if $K_M\not\simeq \Frac(M)$, then $G^X\not\simeq \Frac(M)\wr_{\Q_2}X.$ This becomes clear in the proof of the theorem where we separate the copy of the Thompson groups as pairs of monochromatic trees and the copy of $\oplus_{\Q_2}K_M$ which corresponds to a single tree but with two different vertex-colouring. 
A convincing and elementary example is the Thompson group $F$ which is isomorphic to the fraction group of the FS category with one colour and no skein relations. The underlying monoid is $M=\N$ with fraction group $\Frac(M)=\Z$ and kernel $K_M=\{e\}$: the trivial group. The group $F$ is indeed isomorphic to $\{e\}\wr_{\Q_2}F$ but not isomorphic to $\Z\wr_{\Q_2}F.$

This specific group structure has been previously witnessed in our work on Jones' actions \cite{Brothier22}. 
Indeed, these groups appear when one considers covariant monoidal functors from the category of monochromatic forests into the category of groups and when a caret is the map $g\mapsto (\alpha(g),e)$ from a group to its square and where $\alpha$ is a group automorphism.
We previously observed that these groups are isomorphic to some of the groups constructed by Tanushevski and by Witzel-Zaremsky using pure cloning systems \cite{Tanushevski16,Tanushevski17,Witzel-Zaremsky18,Zaremsky18-clone}.
We refer the reader to the appendix of \cite{Brothier21} for a detailed discussion and comparison of these three constructions.

From this structural theorem we deduce a number of corollaries. 
\begin{lettercorollary}\label{cor:B}
Consider two Ore HP monoids $M$ and $\ti M$ with associated FS groups $G^X$ and $\ti G^X$ with $X=F,T,V$.
In the general twisted case we have that
$$\text{ if } G^X\simeq \ti G^X \text{ for one } X\in\{F,T,V\} \text{, then } K_M\simeq K_{\ti M}.$$
The converse holds in the untwisted case.
\end{lettercorollary}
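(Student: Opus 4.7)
The converse in the untwisted setting is immediate from Theorem \ref{lettertheo:decomposition}. Indeed, assume both twists $\tau$ and $\tilde\tau$ are trivial and let $\alpha : K_M \to K_{\tilde M}$ be an isomorphism. The coordinate-wise extension of $\alpha$ is an $X$-equivariant isomorphism $\bigoplus_{\Q_2} K_M \to \bigoplus_{\Q_2} K_{\tilde M}$, which extends to an isomorphism of semidirect products $K_M \wr_{\Q_2} X \simeq K_{\tilde M} \wr_{\Q_2} X$, hence to $G^X \simeq \tilde G^X$.

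For the forward direction, by Theorem \ref{lettertheo:decomposition} we may replace $G^X$ and $\tilde G^X$ by the (possibly twisted) wreath products $W := K_M \wr_{\Q_2, \tau} X$ and $\tilde W := K_{\tilde M} \wr_{\Q_2, \tilde\tau} X$, with base subgroups $B := \bigoplus_{\Q_2} K_M \lhd W$ and $\tilde B \lhd \tilde W$. Given a group isomorphism $\phi : W \to \tilde W$, we want to produce an isomorphism $K_M \simeq K_{\tilde M}$ in two steps.

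\emph{Step 1 (canonical identification of the base).} Show that $B$ is an intrinsically characterized subgroup of $W$, so that $\phi(B) = \tilde B$. This uses the near-simplicity of the quotient $W/B \simeq X$: for $X = V$, which is simple, $B$ is the kernel of a canonical surjection $W \twoheadrightarrow V$ (and any maximal normal subgroup with simple Thompson-type quotient yields the same subgroup). For $X = T$ or $F$, a parallel argument uses the simplicity of $[T,T]$, $[F,F]$ together with the finitely generated abelian abelianizations of $T$ and $F$, identifying $B$ as a canonical term in a natural normal series of $W$. In each case $B$ is distinguished abstractly, so $\phi(B) = \tilde B$.

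\emph{Step 2 (recovering the local factor).} Once $\phi(B) = \tilde B$, the induced outer action of $X \simeq W/B$ on $B$ is transported to the outer action of $X \simeq \tilde W/\tilde B$ on $\tilde B$. Since $X$ acts transitively on $\Q_2$ and conjugation by $W$ permutes the summands $K_M^{(q)}$ transitively, it suffices to carve out a single such summand intrinsically. Fix $q \in \Q_2$ and pick a subgroup $H$ of the stabilizer $X_q$ whose action on $\Q_2 \setminus \{q\}$ has no finite orbit. The centralizer of $H$ in $B$ then consists exactly of elements supported at $q$, since any element supported away from $q$ would be forced by $H$-invariance to be constant along an infinite orbit, contradicting finite support. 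This identifies $K_M^{(q)} \simeq K_M$ intrinsically; transporting along $\phi$ yields $K_M \simeq K_{\tilde M}$.

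The main obstacle lies in Step 1: extracting $B$ as a canonical subgroup of the abstract twisted wreath product. Once this is achieved, Step 2 is a routine application of the well-understood dynamics of the Thompson-type actions on the dyadic rationals.
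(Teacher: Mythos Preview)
Your converse direction is correct and matches the paper.

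For the forward direction, your outline has the right shape but both steps contain real gaps, and the paper takes a different route.

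\emph{Step 1 is not actually carried out.} For $X=F$ and $X=T$ you only gesture at ``a parallel argument''; this is where the work lies. In fact, for $X=F$ the paper does \emph{not} show that $B$ is characteristic. Instead it exploits the fixed point $0$ of $F\curvearrowright\Q_2$: the wreath product splits as a direct product $\Ga_0\times(\Ga\wr_U F)$ with $U=\Q_2\setminus\{0\}$, and a ``maximal decomposition pair'' argument (using that $F$ has trivial centre and is directly indecomposable) shows $\Ga_0\simeq\Ga$ itself is characteristic---so no Step~2 is needed at all. For $X=T$ the paper does prove $B$ characteristic, but via an explicit intrinsic description: $B$ is the largest normal subgroup $H$ admitting a decomposition $H=A\times B$ with $NC(A)=NC(B)=H$, and simplicity of $T$ rules out any competitor not contained in $B$.

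\emph{Step 2 is not routine.} Your centralizer argument implicitly uses the canonical section $X\hookrightarrow W$: ``the centralizer of $H$ in $B$'' means the $H$-fixed points for the semidirect-product action. But $\phi$ need not carry the canonical copy of $X\subset W$ to the canonical copy in $\tilde W$; it only sends $h$ to some $(c_h,\bar\phi(h))$ with a cocycle $c:H\to\tilde B$. When $\tilde B$ is nonabelian, conjugation by $(c_h,\bar\phi(h))$ on $\tilde B$ involves $c_h$, so there is no reason that $\phi(K_M^{(q)})$ equals a single coordinate factor $K_{\tilde M}^{(q')}$. The paper's $T$-case handles exactly this obstacle by invoking Brin's theorem that every automorphism of $T$ is spatial (conjugation by a circle homeomorphism), forcing $\supp(\phi(k))=\varphi(\supp(k))$; this is not a routine dynamical fact and is precisely what you are missing.

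In short: the strategy you sketch is plausible but the two hard points are left open, and the paper's proofs (the fixed-point/direct-product trick for $F$, the decomposability characterisation plus Brin's $\Aut(T)$ for $T$) are concretely different from what you propose.
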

The case $X=V$ is easily deduced from \cite[Theorem 4.12]{Brothier22} (in section 5 of this later article we even described precisely all isomorphisms between two such groups in the untwisted case). 
We provide two independent short proofs for the $F$ and $T$-cases that are based on two different tricks, see Theorem \ref{theo:classifWP}.
Our proofs are heavily using the simplicity of $V,T$ and the derived subgroups of $F$. It is not clear how to adapt our approach to the group $BV$ which has a much richer normal subgroup structure, see for instance \cite{Zaremsky18-BV}.

Note that each FS group $G$ constructed from a Ore HP monoid is again the fraction group of a larger HP monoid $\cF_\infty$ obtained by considering infinite but finitely supported lists of trees, see Section \ref{sec:FS-categories}. We may iterate this construction and take the FS group obtained from the monoid $\cF_\infty$.
Hence, one single Ore HP monoids permits to construct infinitely many new ones with associated fraction groups that we may show (in certain cases) to be pairwsie non-isomorphic using the theorem of above.

We then investigate properties of different flavour: the analytic or geometric property of Haagerup (equivalent to Gromov's a-T-menability), homological and topological finitness properties, and orderability.
We refer to Sections \ref{sec:Haagerup}, \ref{sec:finiteness}, and \ref{sec:orderability} for definitions, references, and details on these properties. 

\begin{lettercorollary}\label{lettercor}
Consider a Ore HP monoid $M$ with fraction group $\Frac(M)$ and associated FS groups $G^X$ with $X=F,T,V,BV.$
The following assertions are true.
\begin{enumerate}
\item If $X\neq BV$, then $\Frac(M)$ has the Haagerup property if and only if $G^X$ does;
\item For all $X$ and $m\in\N^*\cup\{\infty\}$, if we are in the untwisted case and $\Frac(M)$ satisfies the homological finiteness property $FP_m$ (resp.~the topological finiteness property $F_m$), then so does $G^X$;
\item For $X=F,T,V$ and $m\in\N^*\cup\{\infty\}$, if $\Frac(M)$ satisfies the topological finiteness property $F_m$, then so does $G^X$;
\item We have that $\Frac(M)$ is left-orderable if and only if $G$ or $G^{BV}$ is.
\item We have that $\Frac(M)$ is bi-orderable if and only $G$ is.
\end{enumerate}
\end{lettercorollary}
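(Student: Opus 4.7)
The overall strategy is to reduce each assertion to a known preservation result for (twisted) permutational restricted wreath products via the isomorphism $G^X \simeq K_M \wr_{\Q_2, \tau} X$ of Theorem~A. A useful preliminary observation is that any generator $a \in \sigma$ splits the length map $\Frac(M) \onto \Z$, giving a semi-direct decomposition $\Frac(M) \simeq K_M \rtimes \Z$. In particular $K_M$ and $\Frac(M)$ share every group-theoretic property that is closed under taking subgroups and under extensions by $\Z$; this covers the Haagerup property, the finiteness properties $FP_m$ and $F_m$, and left/bi-orderability.

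For (1), I would combine three ingredients: the Haagerup property of the classical Thompson groups $F, T, V$ (due to Farley, via proper actions on CAT(0) cube complexes); the Cornulier--Stalder--Valette theorem that a permutational restricted wreath product $A \wr_Y B$ inherits the Haagerup property from $A$ and $B$ provided the $B$-action on $Y$ has amenable stabilisers; and the fact that the Thompson-type action on $\Q_2$ satisfies such a hypothesis, as already exploited in earlier work on Jones' actions. The twist $\tau(v,q) = \ad(a)^{-\log_2 v'(v^{-1}q)}$ only acts by inner automorphisms of $K_M$ on each coordinate, so it does not obstruct the construction of a proper conditionally negative definite function. Verifying this compatibility with the twist is the most delicate analytic point.

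For (2) and (3), the main tool is preservation of $F_m$ and $FP_m$ under permutational restricted wreath products, as developed by Witzel--Zaremsky in the cloning-system framework and more broadly by Bartholdi--Cornulier--Kochloukova. The $V$-action on $\Q_2$ is transitive with trivial point stabilisers, supplying the cocompactness hypothesis required by those theorems; in the untwisted case this directly yields (2). For (3), where $X \in \{F, T, V\}$, I would exploit that the classical Thompson groups are themselves of type $F_\infty$ and that the proof of Theorem~A passes through an explicit semi-direct product structure; this lets one run the finiteness argument without having to compensate for the twist, which is the principal extra difficulty here.

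For (4) and (5), I would use the standard fact that a permutational restricted wreath product of (left-)orderable groups is again (left-)orderable, by ordering $\bigoplus_{\Q_2} K_M$ lexicographically along a fixed left-order of $\Q_2$. Conversely, each FS group contains $\Frac(M)$ as a subgroup (monochromatic left-vines), giving the easy direction. Among the relevant Thompson groups, $F$ and $BV$ are left-orderable while $T$ and $V$ are not, and only $F$ is bi-orderable, which explains the restrictions in the two items. The main obstacle for (5) is to verify that $\ad(a)$ preserves a chosen bi-order on $K_M$; this is automatic, because any bi-order on $\Frac(M) \simeq K_M \rtimes \Z$ restricts to an $\ad(a)$-invariant bi-order on $K_M$, after which the lexicographic construction and standard bi-orderability of $F$ combine to produce a bi-order on $G$.
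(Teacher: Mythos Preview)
Your treatment of items (4) and (5) is essentially the paper's, and in one respect more careful: you correctly isolate that a bi-order on $K_M$ must be $\ad(a)$-invariant and explain why restricting a bi-order on $\Frac(M)\simeq K_M\rtimes\Z$ provides one. The paper instead appeals to a blanket claim that split extensions preserve bi-orderability, which is false without this compatibility (the Klein-bottle group $\Z\rtimes_{-1}\Z$ is a counterexample). A minor slip: $\Frac(M)$ embeds in $G$ via left-vines coloured by \emph{arbitrary} words of $M$ (through $\Frac(\cF_\infty)\simeq G$), not monochromatic ones; alternatively one argues, as the paper does, via the chain $K_M\hookrightarrow G$ and $\Frac(M)\simeq K_M\rtimes\Z$.

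There is a genuine gap in your approach to item~(1). First, the hypothesis you attribute to Cornulier--Stalder--Valette (amenable point stabilisers) would be unverifiable here: the stabiliser of a point of $\Q_2$ under $F$ is $F$ itself or $F\times F$, and its amenability is a famous open problem. Second, and more fundamentally, CSV is a statement about \emph{untwisted} permutational wreath products; you acknowledge that the twist is ``the most delicate analytic point'' but do not actually dispatch it, and the fact that each $\tau(v,q)$ is inner does not by itself reduce the twisted wreath product to an untwisted one. The paper does not use CSV at all: it cites the author's earlier result \cite{Brothier19WP}, an analytic (positive-definite-function) argument built from Jones' technology and designed precisely for the twisted products $\Gamma\wr_{\Q_2,\tau}V$. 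That gives the $V$-case directly, and the $F$ and $T$-cases follow from the inclusions $K_M\subset G\subset G^T\subset G^V$ together with closure of the Haagerup property under subgroups.

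Your sketch of item~(2) contains a factual error: the $V$-action on $\Q_2$ does \emph{not} have trivial point stabilisers. Stabilisers of $j$-tuples under $F$ or $T$ are finite products of copies of $F$; under $V$ or $BV$ they are more complicated Thompson-type groups. The Bartholdi--Cornulier--Kochloukova hypothesis $(P_m)$ requires these stabilisers to be of type $FP_{m-j}$, which is immediate for $X=F,T$ but not for $X=V,BV$. The paper handles the latter by a back-door: taking $M=\Mon\langle a,b\mid ab=ba\rangle$ gives an untwisted FS category with finite spine, hence $G^V,G^{BV}$ are of type $F_\infty$ by the Thumann/spine criterion; since $K_M\simeq\Z$ has infinite abelianisation, the \emph{converse} direction of BCK then forces $(P_\infty)$ for $V,BV\curvearrowright\Q_2$. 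For item~(3) your description is too schematic to be a proof; the paper invokes an explicit classifying-space construction (due to Tanushevski, outlined in \cite{Witzel-Zaremsky18}) and runs Brown's criterion on it, a route that works uniformly in the twisted case.
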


Note that the choice of investigating these specific properties is not at all random. These are properties that one expects for Thompson-like groups. Moreover, the extra structure of the underlying FS category is well-adapted for investigating each of these specific properties as we are about to explain.

The first statement was previously proven in greater generality for the $V$-case \cite{Brothier19WP}. 
Since the Haagerup property is closed under taking subgroup we immediately deduce the $F$ and $T$-cases.
It is still unknown today if all braid groups have the Haagerup property and thus unknown for the braided Thompson group $BV$ and even less for the forest-skein generalisations.

In \cite{Brothier19WP} we gave the first analytic but not geometric proof showing that certain wreath products had the Haagerup property. It permitted to include examples that were previously out of reach from other known geometrical approaches.
The proof extended non trivially some novel techniques for constructing positive definite maps with good asymptotic properties. Techniques which were introduced by Jones and the author in \cite{Brothier-Jones19}, see also \cite[Section 2.2.2]{Brothier20-survey} for the story.
We strongly believe that a similar technology can be applied to prove that many more FS groups have the Haagerup property.

The homological and topological statements in the untwisted case are direct corollaries of a theorem of Bartholdi, Cornulier, and Kochloukova on untwisted wreath products \cite{Bartholdi-Cornulier-Kochloukova15}. Their proof is purely homological and quasi-optimal: there is a converse for groups with infinite abelianisation, see Section \ref{sec:finiteness} for details.
The topological statement in the general twisted case is proved by constructing an explicit classifying space and following a rather standard method first used by Brown \cite{Brown87}.
We have adapted a clever construction of a classifying space due to Tanushevski (that is unpublished) which is sketched by Witzel and Zaremsky in \cite[Section 6]{Witzel-Zaremsky18}. We explain the construction and the main idea of this topological proof in Section \ref{sec:finiteness}.

Note that we have previously proved that a large class of FS groups are of type $F_\infty$ using a technique due to Thumann that is purely topological, see \cite{Brothier22-FS,Thumann17}. This last class is rather different from the one studied in this article. However, the two classes share a common infinite intersection. 

Most proofs on finiteness properties of Thompson-like groups are obtained by constructing a nice classifying space using some extra structure and then to apply the so-called Brown's criterion. It is less common to have a homological proof (thus using resolutions of $\Z[G]$-modules). Although, the wreath product structure allows to do so.
It is very unclear at the moment how to apply such homological techniques to other FS groups or Thompson-like groups. 
Nevertheless, we suggest natural candidates for which such an extension may hold, see the end of Section \ref{sec:finiteness}.

The statement on orderability follows easily from the wreath product structure since $F$ is bi-orderable and $BV$ is left-orderable (but not bi-orderable) by Burillo and Gonzalez-Meneses \cite{Burillo-Gonzalez-Meneses08}, see also Ishida \cite{Ishida18}. 

To any FS group $G$, we have previously introduced a canonical totally ordered set $(Q,\leq)$ on which $G$ acts in an order-preserving way, see Section \ref{sec:canonical} and \cite[Section 6]{Brothier22}.
We have initially thought that the action $G\act Q$ was always faithful which we prove in this article to be false.
The fact that some $G$ are not left-orderable shows in addition that $Q$ cannot be replaced by a more sophisticated totally ordered set on which $G$ acts faithfully (in that case $G$ would be left-orderable by pulling back the order of this space). 
In particular, it shows how different are our group that are constructed {\it diagrammatically} compare to a number of Thompson-like groups that are constructed {\it dynamically} and usually via an order-preserving action (like a piecewise affine one).
This also provides additional evidences that FS groups are rather different than Guba-Sapir's diagram groups. 
Indeed, these latter groups are all bi-orderable \cite{Guba-Sapir06}, see also \cite{Wiest03}.
In a future article of Ryan Seelig and the author we will prove that the assumption of having a faithful action $G\act (Q,\leq)$ is intimately linked with the simplicity of its derived subgroup and on a simplicity statement of its underlying FS category \cite{Brothier-Seelig23}.

Our Corollary \ref{lettercor} provides explicit examples of FS groups that are left-orderable but not bi-orderable and examples that are not even left-orderable. 
Although, the following problems remain unanswered.

\begin{letterprob}
\begin{enumerate}
\item Is there a FS group that does not have the Haagerup property?
\item Is there a finitely presented FS group that is not of type $F_\infty$?
\end{enumerate}
\end{letterprob}

Observe that the first problem is equivalent to ask if all $\Frac(M)$ (with $M$ an Ore HP monoid) has the Haagerup property. Indeed, the class of groups with the Haagerup property is closed under taking subgroups. Moreover, $K_M\subset\Frac(\cF_M,1)\subset \Frac((\cF_M)_\infty)$ where $K_M$ is the kernel of the length map of $\Frac(M)$, $\cF_M$ is the FS category deduced from $M$, and $(\cF_M)_\infty$ is the FS {\it monoid} constructed from the category $\cF_M$ which is itself a Ore HP monoid.
To answer positively the second problem it would be sufficient to produce a fraction group $\Frac(M)$ that is finitely presented but not of type $F_\infty$.

{\bf Plan.} 
Section \ref{sec:framework} introduces notations and the setting: homogeneously presented (HP) monoids, forest-skein (FS) categories and groups. 
In Section \ref{sec:shape} we introduce {\it shape-preserving} skein presentations: all skein relations are pairs of trees $(t,s)$ where $t,s$ only differ by their vertex-colouring (and thus have same {\it shape}).
Note that all FS categories obtained from monoids have a shape-preserving skein presentation. We provide a semi-direct product decomposition for their associated FS groups using certain colouring and decolouring maps.
In Section \ref{sec:decomposition} we decompose fraction groups of HP monoids and their associated FS groups. This gives Theorem \ref{lettertheo:decomposition}.
Section \ref{sec:corollaries} investigates consequences of Theorem \ref{lettertheo:decomposition}. 
We start by proving Corollary \ref{cor:B}.
We then introduce and discuss about the Haagerup property, homological and topological finiteness properties, orderability and prove Corollary \ref{lettercor}.
In Section \ref{sec:canonical} we recall the construction of a canonical action on a totally ordered space for each FS group. We prove that this action is rather trivial when the FS group is obtained from a monoid or more generally from a shape-preserving skein presentation.
Section \ref{sec:examples} presents examples of FS groups and provides a bank of counterexamples for various properties that we may or not expect for FS groups.

{\bf Acknowledgement:}
We warmly thank Matt Zaremsky for stimulating and pertinent exchanges regarding forest-skein groups.
We are also grateful to Mark Kambites and Robert Gray for making helpful comments regarding the first problem and to point out relevant references.
Finally, we are very grateful to Christian de Nicola Larsen and Ryan Seelig for several discussions and their careful readings which revealed imprecisions on a previous version of this article.

\section{General framework}\label{sec:framework}
We introduce notations and terminology for monoids and their presentation.
Then we briefly recall what forest-skein (FS) categories are. 
We explain how a homogeneously presented (HP) monoid permits to define a FS category. 
We then recall what a calculus of fractions is and explain when and how monoids and FS categories produce fraction groups.

In this article we assume that the reader has some basic knowledge on the three Richard Thompson's groups $F\subset T\subset V$ and their Brown's tree-diagrammatic description. For background on those we recommend the seminal paper of Brown and the expository article of Cannon, Floyd, and Parry \cite{Brown87, Cannon-Floyd-Parry96}.
We will also assume the reader knows what the braided Thompson group $BV$ is. We will only use very basic fact about it: its diagrammatic description and the usual map $BV\onto V$.
We recommend the two foundational articles of Brin on that subject \cite{Brin-BV1, Brin-BV2}.
We will recall what FS categories and groups are but for additional information we recommend our previous paper \cite{Brothier22-FS}.

\subsection{Homogeneous relations, presentations, and monoids}

{\bf Free monoid.}
If $\sigma$ is a {\it nonempty} set we write $\Mon\la\sigma\ra$ for the \textit{free monoid} over $\sigma$: elements of $\Mon\la\sigma\ra$ are finite sequences over $\sigma$ that we compose by concatenation from left to right. 
The empty sequence is allowed and denoted $e$. It is the neutral or trivial element of $\Mon\la\sigma\ra.$
If $\sigma=\{a_1,\cdots,a_n\}$, then we write $\Mon\la a_1,\cdots,a_n\ra$ for $\Mon\la\sigma\ra.$

{\bf Letter and word.}
An element of $\sigma$ may be called a \textit{letter, colour,} or a \textit{generator} of $M$.
A finite sequence over $\sigma$ is called a \textit{word} and usually denoted $b_1b_2\cdots b_n$ rather than $(b_1,\cdots,b_n)$ for $b_i\in\sigma.$

{\bf Length.}
Define the length $|w|=\lambda(w)$ of a word $w$ to be the number of letters composing it. Hence, $|e|=0$ and $|b_1\cdots b_n|=n$.
This defines a monoid morphism 
$$\lambda:\Mon\la\sigma\ra\to \N,\ w\mapsto |w|$$ where $\N=\{0,1,2\cdots\}$ is equipped with the usual addition $+$.

{\bf Homogeneous relation and presentation.}
An \textit{homoneous relation} over $\sigma$ is a pair of words $(u,v)$ such that $u$ and $v$ have the same length.
Let $\rho$ be a set of homogeneous relations over $\sigma$.
We say that $\pi:=(\sigma,\rho)$ is a \textit{homogeneous (monoid) presentation} (in short HP) and write 
$$\Mon\la\pi\ra \text{ or } \Mon\la\sigma|\rho\ra$$ for the associated monoid, i.e.~the quotient of $\Mon\la\sigma\ra$ by the equivalence relation generated by $\rho$.
We say that $\Mon\la\pi\ra$ is a \textit{homogeneously presented monoid} or in short a HP monoid.
In practice, we may write $u=v$ to express a relation $(u,v)$ and remove brackets for expressing $\sigma$ in the presentation of a monoid. For instance we may write
$$\Mon\langle a,b| ab=ba\rangle$$
for $\Mon\langle \{a,b\} | (ab,ba)\rangle.$
Most of our explicit monoids will have two generators and one or two relations but nothing prevent to have presentations of any cardinal.
\begin{remark}
Consider a homogeneous presentation $\pi=(\sigma,\rho)$ and its associated monoid $M$.
Since all relations have same length we can factorise the length morphism $\lambda:\Mon\la\sigma\ra\to\N$ into a monoid morphism:
$$\lambda_\pi:M\to \N.$$
Since $\sigma$ is assumed to be nonempty we have that $\lambda_\pi$ is surjective.
\end{remark}

\subsection{General forest-skein categories}\label{sec:FS-categories}

{\bf Tree, forest, free forest-skein category.}
We briefly recall what a forest-skein category is. We refer the reader to \cite{Brothier22-FS} for extensive details.
We call {\it tree} a finite rooted binary tree $t$ so that all vertices have zero or two children. We represent a tree as a graph embedded in the plane with the root on the bottom and leaves on top. Edges either go toward top left or top right and are called left-edges and right-edges.
The vertices without children are called leaves and are ordered from left to right and numbered starting at $1$. A coloured tree is a pair $(t,c)$ where $c:\Ver(t)\to S$ is a map from the interior vertices of $t$ (the vertices that are not leaves). Hence, $(t,c)$ is the data of a tree and a colouring of its (interior) vertices. We often write $t$ rather than $(t,c)$ and drop the word ``coloured".
Here is an example of a tree with three interior vertices coloured by $a$ and $b$ and four leaves:
\[\includegraphics{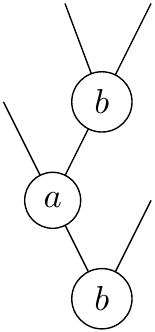}\]
A forest $f$ is a finite list of trees $(f_1,\cdots,f_n)$. We write $\Root(f),\Leaf(f)$ its sets of roots and leaves, respectively.
We represent them in the plane with $f_j$ placed to the left of $f_{j+1}$.
Vertical stacking of forests defines a partially defined associative binary operation. We write $f\circ g$ for the forest obtained by placing $g$ on top of $f$ when $f,g$ are forests satisfying $|\Leaf(f)|=|\Root(g)|.$

\begin{definition}
The collection of all such forests equipped with the composition forms a small category $\cUF\la S\ra$ that we call the \emph{free forest-skein category} over the set $S$.
When $S=\{x\}$ is a singleton, then we write $\cUF\la x\ra$ for $\cUF\la S\ra$ and call it is the \emph{monochromatic} free forest-skein category with colour $x.$
\end{definition}

If $f\ot g$ denotes the {\it horizontal} stacking consisting in placing $f$ to the left of $g$, then we obtain an associative binary operation that is compatible with the composition $\circ$.
Note that if $f=(f_1,\cdots,f_n)$ and $g=(g_1,\cdots,g_m)$ are written as two lists of trees, then 
$$f\ot g=(f_1,\cdots,f_n, g_1,\cdots,g_m)$$ is the concatenation of these lists.
We call the operation $\ot$ the tensor product of $\cUF\la S\ra$. 

{\bf Skein presentation, forest-skein category.}
A skein presentation is a pair $(S,R)$ where $S$ is a nonempty set called the {\it colour} set and $R$ is a set of pairs $(t,s)$ where $t,s$ are coloured trees with same number of leaves.
This defines a {\it forest-skein category} $\cF:=\FC\la S|R\ra$ which is the quotient of $\cUF\la S\ra$ by the equivalence relation $\sim$ generated by the skein relations.
That is $t\sim s$ for all $(t,s)\in R$ and $\sim$ is closed under taking composition and tensor product.
Note that the two operations $\circ,\ot$ both extend to $\cF$. If we add the empty diagram (in order to have a unit for $\ot$) we obtain a small tensor category.
We continue to call {\it forests} the elements of $\cF$.
Here is an example of a skein presentation with two colours $a,b$ and one skein relation
\[\includegraphics{t_as_b.pdf}\]

{\bf Forest-skein monoid.}
Given $(S,R)$ we also form a {\it forest-skein monoid} $\cF_\infty$ which consists in infinite sequences of trees $(f_1,f_2,\cdots)$ of $\cF$.
We require that $f_j=I$ is the trivial tree for all but finitely many $j\geq 1$. 
We extend the composition $\circ$ to $\cF_\infty$ obtaining a monoid.

{\bf Notation.}
We typically use the symbols $a,b$ for colours, $f,g,h$ for forests, and $t,s$ for trees.
A splitting is called a {\it caret} and if this caret is coloured by say $b$ we call it a $b$-caret.
We may write it $Y_b$ and identifying it with a tree with two leaves.
We write $I$ for the trivial tree: the tree with one root and one leaf (this leaf being equal to the root). It is always uncoloured.
We write $b_{j,n}$ for the so-called {\it elementary forest} that has $n$ roots, all its trees are trivial except the $j$-th one which is a single $b$-caret.
Using tensor product notations we obtain
$$b_{j,n}= I^{\ot j-1} \ot Y_b \ot I^{\ot n-j}.$$
Similarly, we write $b_j$ for the infinite forest of $\cF_\infty$ having a single caret at the $j$-th root that is coloured by $b$.

{\bf Terminology.} We will often use the acronym FS for ``forest-skein". Hence, we will write FS category and FS monoid for forest-skein category and forest-skein monoid. Similarly, we will later write FS group for forest-skein group.

\subsection{Forest-skein categories associated to monoids}

{\bf FS category and skein presentation.}
For any homogeneously presented monoid $M=\Mon\langle \sigma|\rho\rangle$ we associate a FS category $\cF_M$ and a skein presentation that we now describe.
Consider the free FS category with set of colours $S=\sigma$.
Given any word $u=a_{1}\cdots a_{n}\in M$ with $a_j\in \sigma$ we consider the tree $\tilde u$ with $n+1$ leaves with all its interior vertices between the root and its first leaf (the left leaf) that are of colour $a_{1},\cdots, a_{n}$  when we read them from the root to the leaf, 
$$\text{i.e.~the tree } \ti u=(a_1)_{1,1}\circ (a_2)_{1,2}\circ\cdots \circ (a_n)_{1,n},$$ 
i.e.~the first leaf of $\tilde u$ has for branch colour the word $u$. 
We may say that $\ti u$ is a {\it left-vine} with $n$ carets and of colour $u$.
For instance, if $\sigma=\{a,b\}$ and we consider the word $u=aab\in M$ we have that $\ti u$ is equal to the following tree:
\[\includegraphics{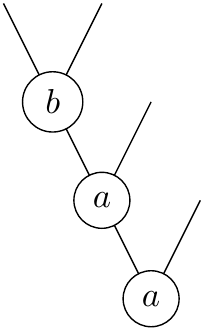}\]
This is a left-vine with three carets and of colour $aab.$
Given a relation $r:=(u,v)\in \rho$ we consider the associated pair of trees $\ti r:=(\ti u,\ti v).$
This defines a skein presentation $P:=(S, R)$ where $S=\sigma$ and $R$ is the family of relations $\ti r$.
We write $\cF_M$ for the associated FS category or simply $\cF$ if the context is clear.

\begin{example}
Consider the HP monoid $M=\Mon\la a,b| aba=bab\ra$.
The associated FS category $\cF_M$ has the following skein presentation with two colours and one skein relation
\[\includegraphics{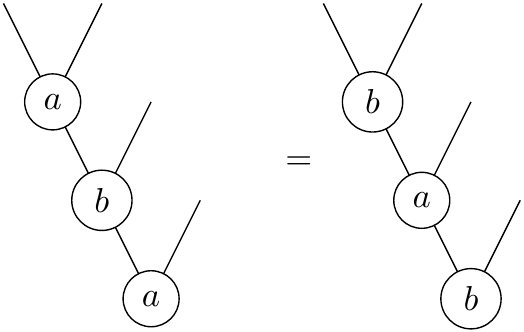}\]
\end{example}

{\bf Forest monoid and presentation.}
We write $\cF_\infty=(\cF_M)_\infty$ for the FS monoid associated to $\cF=\cF_M$.
The monoid $\cF_\infty$ admits a monoid-presentation deduced from the skein presentation of $\cF$ as observed and described in \cite[Section 4]{Brothier22-FS}.
We remind the reader what is this presentation in this specific case.
The monoid $\cF_\infty$ admits the set of generators 
$$S_\infty =( a_j:\ a\in S, j\geq 1)=S\times \N^*$$ 
where $a_j$ corresponds to the elementary forest having only trivial trees except the $j$-th one that has two leaves and one (interior) vertex coloured by $a$.
Consider a relation $r=(u,v)$ of $M$.
We have constructed a pair of trees $\ti r=(\ti u,\ti v)$ from it.
Now, consider the pair of forests $\ti r_j=(\ti u_j,\ti v_j)$ for $j\geq 1$ where $\ti u_j$ is the forest having only trivial trees except the $j$-th one that is equal to $\ti u$.
If $u=a_1\cdots a_\ell$, then $\ti u_j=(a_1)_j (a_2)_j\cdots (a_\ell)_j$ and this spelling is unique in letters of $S_\infty$. 
Hence, $\ti r_j:=(\ti u_j,\ti v_j)$ provides a relation on the free monoid generated by $S_\infty$.
Consider now the Thompson-like relations inherent to all forest-skein monoids:
$$x_q y_j = y_j x_{q+1} ,\ x,y\in S,\ 1\leq j<q.$$
If $R_\infty$ is the union of all these relations we obtain that $P_\infty:=(S_\infty,R_\infty)$ is a monoid-presentation of $\cF_\infty$. 
Note that this presentation is canonically deduced from the presentation $(\sigma,\rho)$ of $M$.

{\bf Notation.}
If $M$ is a HP monoid as above, then $\cF_M$ and $(\cF_M)_\infty$ stand for the associated FS category and FS monoid, respectively.

Here is a rather obvious observation.

\begin{observation}
If $M=\Mon\langle \sigma|\rho\rangle$ is a HP monoid, then so is the FS monoid $(\cF_M)_\infty$ with the associated presentation described above.
\end{observation}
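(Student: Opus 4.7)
The plan is simply to verify that every defining relation of $(\cF_M)_\infty$, as presented by $P_\infty=(S_\infty,R_\infty)$ above, equates two words of equal length in the free monoid $\Mon\la S_\infty\ra$. Since $S_\infty=S\times \N^*$ is nonempty (as $\sigma=S$ is nonempty), this is exactly what it means for $P_\infty$ to be a homogeneous presentation.

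There are two families of relations to inspect. First, consider the Thompson-like relations $x_q y_j = y_j x_{q+1}$ for $x,y\in S$ and $1\leq j<q$. Each side is a word of length $2$ in $S_\infty$, so these are homogeneous. Second, consider the relations $\ti r_j=(\ti u_j,\ti v_j)$ for $j\geq 1$ coming from $r=(u,v)\in\rho$. By hypothesis, $\pi=(\sigma,\rho)$ is homogeneous, so $|u|=|v|=:\ell$. Writing $u=a_1\cdots a_\ell$ and $v=b_1\cdots b_\ell$ in $\sigma$, the construction recalled above spells $\ti u_j$ as the word $(a_1)_j(a_2)_j\cdots(a_\ell)_j$ and $\ti v_j$ as $(b_1)_j(b_2)_j\cdots(b_\ell)_j$ in $S_\infty$, both of length $\ell$. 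Hence these relations are homogeneous too.

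Since every relation in $R_\infty$ is length-preserving, the presentation $P_\infty=(S_\infty,R_\infty)$ is homogeneous, and therefore $(\cF_M)_\infty=\Mon\la S_\infty\mid R_\infty\ra$ is a HP monoid. There is essentially no obstacle: the only thing one has to double-check is that the spelling of $\ti u_j$ and $\ti v_j$ in the generators $S_\infty$ really does have length equal to the word-length of $u$ and $v$ in $\sigma$, which is immediate from the description of $\ti u_j$ as a left-vine of $\ell$ carets placed at the $j$-th root.
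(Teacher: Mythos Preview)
Your proof is correct: you verify directly that every relation in $R_\infty$ equates two words of the same length over $S_\infty$, which is exactly the definition of a homogeneous presentation. The paper itself gives no proof of this observation, introducing it as ``a rather obvious observation,'' so your argument is precisely the natural verification the paper leaves to the reader.
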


\begin{example}
Consider again the HP monoid
$$M=\Mon\la a,b| aba=bab\ra.$$
Its associated FS monoid admits the following infinite presentation:
\begin{itemize}
\item with generators $a_1,b_1,a_2,b_2,\cdots$;
\item and relations
\begin{enumerate}
\item $a_jb_ja_j=b_ja_jb_j$ for all $j\geq 1$;
\item $x_q y_i = y_i x_{q+1}$ for all $1\leq i<q$ and $x,y\in \{a,b\}.$
\end{enumerate}
\end{itemize}

\end{example}

\subsection{Calculus of fractions}

We now explain how to construct groups and groupoids from certain monoids and categories.

\begin{definition}
A monoid $M$ 
\begin{enumerate}
\item is {\it left-cancellative} if $xy=xy'$ implies $y=y'$ for all $x,y,y'\in M$;\\
\item satisfies {\it Ore's property} if for all $x,y\in M$ there exists $u,v\in M$ so that $xu=yv$, i.e.~any pair of elements of $M$ admits a common right-multiple;\\
\item is a {\it Ore monoid} or admits a {\it (right-)calculus of fractions} if it is left-cancellative and satisfies Ore's property.
\end{enumerate}
\end{definition}

{\bf Convention.} We will exclusively consider {\it right}-calculus of fractions. 

{\bf Fraction group of a monoid.}
When $M$ is a Ore monoid we can form its {\it fraction group} $\Frac(M)$ as follows.
Write $\Frac(M)$ for the set $M\times M$ quotiented by the equivalence relation $\sim$ generated by
$$(x,y)\sim (xz,yz) \text{ for all } (x,y)\in M\times M, z\in M.$$
Write $[x,y]$ for the equivalence class of $(x,y)$ in the quotient $\Frac(M)$.
The set $\Frac(M)$ admits the following group structure with multiplication
$$[x,y]\cdot [x',y'] = [xz,y'z'] \text{ where } yz=x'z'$$
and inverse $[x,y]^{-1}=[y,x]$.

Note that $x\mapsto [x,e]$ is injective only when $M$ is {\it right}-cancellative as well (on top of being {\it left}-cancellative).
Nevertheless, it is very useful and convenient to interpret $[x,y]$ as $x\circ y^{-1}$.
In that description $\Frac(M)$ is generated by elements of $M$ and formal inverses of it.

We recall some basic facts on $\Frac(M)$.
\begin{remark}
Let $M$ be a Ore monoid with fraction group $\Frac(M)$.
\begin{enumerate}
\item If $\pi$ is a a monoid presentation of $M$, then it is a group presentation of $\Frac(M)$ as well. We may write $\Frac(M)=\Gr\la\pi\ra$ to indicate that we consider the {\it group} (and note the monoid) obtained from the presentation $\pi$.
\item If $M=\Mon\la\pi\ra$ is a Ore HP monoid, then the length map factorises into a surjective group morphism
$$\lambda_\pi:\Frac(M)\to\Z.$$
In particular, $\Frac(M)$ does not have Kazhdan property (T) and is never simple.
\item If $M$ is a HP monoid, then for all $x\in M, x\neq e$ and $n\geq 1$ we have $x^n\neq e$. This does not mean that its fraction group (when it exists) has no torsion.
Consider $$M=\Mon\langle a,b| ab=ba, aa=bb\rangle.$$
It satisfies Ore's property since it is abelian.
To show that $M$ is left-cancellative it is sufficient to prove that the two maps 
$$L_a:M\to M, x\mapsto ax \text{ and } L_b:M\to M, x\mapsto bx$$ are injective. 
By symmetry of the presentation it is sufficient to do it for $L_a$.
Any element of $M$ can be written uniquely as: $a^n$ or $a^nb$ with $n\geq 0$.
Hence, there is exactly two elements of $M$ of each length $n\geq 1$.
This implies that $L_a$ is injective since it restricts to a bijection from the set of words of length $n$ to the set of words of length $n+1$.
This proves that $M$ is left-cancellative.
Now, its fraction group $G$ has same presentation $\Gr\langle a,b| ab=ba, aa=bb\rangle.$
Observe that $ab^{-1}$ is an element of $G$ of order two.
\end{enumerate}
\end{remark}

\subsection{Forest-skein groups}
If $\cF$ is FS category (or more generally any small category) we can similarly define left-cancellativity, Ore's property, and Ore FS category. The only difference being that we only consider composable pairs in the definitions. For instance, $\cF$ is left-cancellative if given any triple of forests $(f,g,g')$ satisfying that $|\Leaf(f)|=|\Root(g)|=|\Root(g')|$ and $f\circ g=f\circ g'$ we have $g=g'$.

{\bf Fraction groupoid.}
We denote by $\Frac(\cF)$ the fraction groupoid of a Ore FS category $\cF$. 
Hence, $\Frac(\cF)$ is the set of $[f,g]=f\circ g^{-1}$ with $f,g$ forests with same number of leaves. 
Equivalently, $\Frac(\cF)$ is the set of classes $[f,g]$ of pairs $(f,g)$ under the equivalence relation $\sim$ generated by 
$$(f,g)\sim (f\circ p,g\circ p) \text{ for all composable forest } p.$$
We think of $[f,g]$ as the diagram of $f$ with on top of it the diagram of $g$ but drawn upside down so that the $j$-th leaves of $f$ and $g$ are linked.

{\bf Fraction groups.}
{\it The} fraction group of $\cF$, denoted $\Frac(\cF,1)$, is the isotropy subgroup of $\Frac(\cF)$ associated to the object $1$. 
As a set $$\Frac(\cF,1)=\{ [t,s]:\ t,s \text{ trees with same number of leaves} \}.$$ 
Recall, that $\cF_\infty$ is the monoid of forests with roots in bijection with $\N$ that are all finitely supported.
Hence, it admits a fraction group $\Frac(\cF_\infty)$ when $\cF_\infty$ is a Ore monoid (which is equivalent in having that $\cF$ is a Ore monoid).
The two groups $\Frac(\cF,1)$ and $\Frac(\cF_\infty)$ embed in each other but are different in general, see \cite[Proposition 3.14]{Brothier22-FS}.

\begin{definition}
The group $\Frac(\cF,1)$ is called the {\it fraction group} of $\cF$.
We will often denote it by $G_\cF$ or $G$.
We may also say that $G_\cF$ is the {\it forest-skein group} of $\cF$ or in short the FS group of $\cF$.
A FS group is any group isomorphic to a certain $G_\cF$ as above.
\end{definition}

\begin{example}
The monochromatic free FS category $\cUF\la x\ra$ is a Ore category and its FS group is isomorphic to Thompson group $F$.
Note that $\cUF\la S\ra$ does not satisfy Ore's property unless $S$ is a singleton. Similarly, a monochromatic FS category is not left-cancellative unless it is free (i.e.~there are no skein relations).
\end{example}

We have the following convenient proposition concerning calculi of fractions.

\begin{proposition}
Let $M$ be a HP monoid, $\cF$ its associated FS category, and $\cF_\infty$ the associated FS monoid.
Let $Q$ be the property of being left-cancellative or of satisfying Ore's property and let $X$ being either $M,\cF,$ or $\cF_\infty$.
If $X$ has property $Q$, then so does all three $M,\cF,\cF_\infty$.
\end{proposition}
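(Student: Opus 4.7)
The plan is to pass both properties through the canonical embeddings $M\hookrightarrow \cF\hookrightarrow\cF_\infty$. I would begin by verifying these are indeed embeddings: the map $u\mapsto\tilde u$ sends $M$ onto the sub-monoid of left-vines rooted at position~$1$ in $\cF_\infty$ (and similarly in $\cF$), preserving composition since $\tilde{uv}=\tilde u\circ\tilde v$; injectivity follows from the canonical presentation of $\cF_\infty$ recalled earlier in the section, because the only relations that can be applied to a single vine at position~$1$ are the skein relations $\ti r=(\ti u,\ti v)$, which project back to the defining relations $(u,v)$ of $M$. Padding by trivial trees then realises $\cF$ as a composition-preserving sub-category of $\cF_\infty$.

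From these embeddings the ``downward'' implications are essentially routine. If $\cF_\infty$ satisfies property $Q$ (left-cancellativity or Ore), then so does its sub-category $\cF$, and so does its sub-monoid $M$: an equation in the smaller object is an equation in the larger one, and for Ore's property one only needs to observe that any common right-multiple produced in $\cF_\infty$ is finitely supported and truncates back to $\cF$, and then restricts further to $M$ by selecting only the leftmost-vine part. The converse $\cF\Rightarrow\cF_\infty$ is also a finite-support argument: given $F,G,G'\in\cF_\infty$, pick an integer $N$ large enough to contain all relevant supports and read the whole equation in the finite-root sub-category with $N$ roots inside $\cF$.

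The delicate point is $M$ has $Q$ $\Rightarrow$ $\cF$ has $Q$. Here I would exploit that the skein presentation of $\cF_M$ is \emph{shape-preserving}: every relation $(\ti u,\ti v)$ has $|u|=|v|$, so the quotient $\cUF\la\sigma\ra\to\cF$ modifies the colouring of a tree but never its underlying uncoloured shape. Consequently an element of $\cF$ is encoded by its uncoloured shape in the monochromatic free FS category $\cUF\la x\ra$ together with a colouring class, and a left-cancellation equation $f\circ g=f\circ g'$ decouples into (i) an equation of uncoloured shapes, handled by left-cancellativity of $\cUF\la x\ra$ (classical, going back to Brown's analysis of $F$), and (ii) branch-wise equations on the colourings, each of which is a left-cancellativity question inside $M$. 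Ore's property for $\cF$ is obtained by the analogous two-stage argument: first equalise the uncoloured shapes via the monochromatic Ore property, then reconcile the induced colourings along each branch using Ore's property of $M$. The main obstacle I expect is making this branch-by-branch reconciliation coherent, since a common multiple chosen in $M$ along one branch may add carets that change the colour words appearing on neighbouring branches; this should be handled either by iterating Ore's property in $M$ finitely many times, or, more cleanly, by invoking the shape/colour decomposition developed in Section~\ref{sec:shape}, which packages the colour data as a single product indexed by the leaves of the shape.
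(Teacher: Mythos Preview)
Your proposal is correct and, for the one implication the paper actually writes out ($M$ Ore $\Rightarrow$ $\cF$ Ore), follows the same underlying idea: read the colour words $u_\ell$ along maximal left-geodesics and use Ore's property in $M$ to extend each branch by a left-vine so that the resulting colour words agree in $M$. The paper does this in one step (attach $\widetilde{p_\ell}$ at each leaf of $t$ and $\widetilde{q_\ell}$ at each leaf of $s$ with $u_\ell p_\ell = v_\ell q_\ell$ in $M$), while you insert a preliminary pass through the decolouring map, first equalising the uncoloured shapes via Ore in $\cUF\langle x\rangle$ before reconciling colours. Your extra step is a genuine clarification: without it the two growths $t\circ f$ and $s\circ g$ need not even have the same underlying shape (take for instance $t=Y_a$ and $s=\widetilde{aa}$), so the paper's sketch is tacitly relying on this reduction.

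Your worry about branch-by-branch coherence is not an obstacle. Once the shapes agree, attaching a left-vine at leaf $\ell$ only extends the maximal left-geodesic through $\ell$ and creates new leaves whose own maximal left-geodesics are trivial; it never touches the colour word of any other maximal left-geodesic. So a single application of Ore in $M$ per leaf suffices, and no iteration is needed. The remaining implications you outline (the finite-support passage $\cF\leftrightarrow\cF_\infty$ and the retraction $\cF_\infty\to M$ via the leftmost-vine colour word, which is well-defined because every relation of $\cF_\infty$ either acts away from position $1$ or rewrites that word by a relation of $M$) are precisely what the paper leaves to the reader.
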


\begin{proof}
Fix $M,\cF,\cF_\infty$ as above and as usual denote by $\sigma$ the generating set of $M$.
Let us prove that if $M$ satisfies Ore's property, then so does $\cF$. We leave the proofs of the other assertions to the reader. 
Hence, assume that $M$ satisfies Ore's property. Using tensor product decomposition it is sufficient to show that given two trees $t,s$ (in the free forest category over $\sigma$) there exist forests $f,g$ satisfying $t\circ f\sim s\circ g$.
For each leaf $\ell$ of $t$ consider the longest geodesic path in $t$ ending in $\ell$ and made exclusively of left-edges. 
Now read the colouring of each of the vertices that this geodesic path goes through. This provides an element $u_\ell:=c_1\cdots c_k$ in the free monoid $\Mon\la\sigma\ra$ where $c_i$ is the colour of the $i$-th vertex of this geodesic path.
Similarly, define $v_\ell$ for the coloured tree $s$.
By Ore's property of $M$ there exists for each $\ell$ two words $p_\ell$ and $q_\ell$ in $\sigma$ so that $u_\ell\circ p_\ell$ and $v_\ell\circ q_\ell$ defines the same element in the monoid $M$.
Define now $f$ the forest with roots in bijection with the leaves of $t$ so that the $\ell$-th tree of $f$ is the left-vine coloured by $p_\ell$. Similarly, define $g$ using $q_\ell$.
By definition of the skein presentation of $\cF$ we deduce that $t\circ f\sim s\circ g$.
\end{proof}

\subsection{Colouring generating property}

Recall from \cite[Section 3.4]{Brothier22-FS} that a FS category $\cF$ has the {\it colouring generating property} (in short CGP) at a colour $a$ of $\cF$ if given any pair of trees $(t,s)$ with same number of leaves we can find a forest $f$ composable with $t$ so that both $t\circ f$ and $s\circ f$ have all vertices on their right branch coloured by $a$. If we have a calculus of fractions we then have that $[t,s]=[t',s']$ where $t'=t\circ f$ and $s'=s\circ f$.
For instance the following $t'$ is in the desired form:
\[\includegraphics{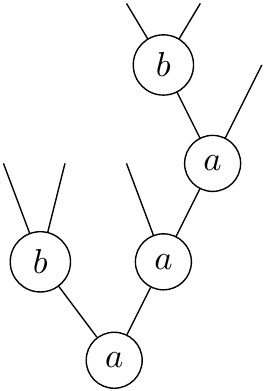}\]

The next proposition shows that all $\cF_M$ satisfy the CGP. This implies the remarkable fact that the fraction groups of $\cF_M$ and its FS monoid $(\cF_M)_\infty$ are isomorphic when they exist.

\begin{proposition}
Let $M$ be a HP monoid, $\cF$ its associated FS category, and $\cF_\infty$ the associated FS monoid.
If $M$ satisfies Ore's property, then $\cF$ has the CGP at any colour. 
In particular, if $M$ is a Ore monoid, then $\cF$ is a Ore category and the two fraction groups $\Frac(\cF,1)$ and $\Frac(\cF_\infty)$ are isomorphic.
\end{proposition}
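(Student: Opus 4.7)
Skein relations in $\cF$ substitute $\tilde u\leftrightarrow\tilde v$ for $u=v$ in $M$. Such a substitution applies at any interior vertex $v$ of a tree $T$ whose first $|u|$ carets on the leftward path from $v$ form a left-vine of colour $u$: in a decomposition $T=A\circ(\cdots\otimes \tilde u\otimes\cdots)\circ D$, the (possibly non-trivial) right-subtrees of vertices on this path get absorbed into $D$ and do not obstruct the substitution. For each interior vertex $v$ on the right-branch of $t$ (root included), define the \emph{maximal left-vine} $L_v$: start at $v$ and follow left-children until hitting a leaf $\ell_v$ of $t$. Its colouring $u_v\in\Mon\la\sigma\ra$ has $v$'s colour as its first letter. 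Maximal left-vines at distinct right-branch vertices are disjoint, their top-left leaves $\ell_v$ are pairwise distinct, and none of them equals the rightmost leaf of $t$.

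\textbf{CGP construction.} To recolour a right-branch vertex $v$ as $a$, I use Ore's property in $M$: pick $p_v,q_v$ with $u_v p_v=a q_v$ in $M$, which by homogeneity yields the skein relation $\tilde{u_v p_v}\sim\tilde{a q_v}$ in $\cF$. Setting $f_{\ell_v}=\tilde{p_v}$ extends $L_v$ in $t\circ f$ to a left-vine of colour $u_v p_v$ rooted at $v$; the skein substitution then recolours $v$ as $a$. Doing this at every right-branch vertex of $t$ simultaneously (possible since the $\ell_v$ are pairwise distinct) and setting $f_n$ to a long $a$-right-vine at the rightmost leaf $n$ makes $t\circ f$ skein-equivalent to a tree with $a$-coloured right-branch. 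The main obstacle is a leaf $\ell$ that is the top-left of max left-vines in both $t$ (colouring $u^t$) and $s$ (colouring $u^s$); I resolve this by iterating Ore in $M$: first find $p'$ with $u^t p'=a q'$, then $p''$ with $u^s p' p''=a q''$, and set $p_\ell=p' p''$, which satisfies both conditions since $u^t p_\ell=a(q' p'')$ also starts with $a$ in $M$. Setting $f_\ell=\tilde{p_\ell}$ at such leaves (and similarly at single-constraint leaves), $f_n$ to an $a$-right-vine, and $f_\ell=I$ otherwise yields the required $f$ giving CGP at $a$ for both $t$ and $s$.

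\textbf{Fraction group isomorphism.} The previous proposition gives that $\cF$ is Ore, so $\Frac(\cF,1)$ is a well-defined group. The isomorphism $\Frac(\cF,1)\simeq\Frac(\cF_\infty)$ then follows from CGP by the standard argument sketched in \cite[Section 3.4]{Brothier22-FS}: CGP lets one represent any class in $\Frac(\cF_\infty)$ by a pair of trees in $\cF$ (attaching $a$-coloured right-vines to absorb the infinite trivial tail into the right branch), showing that the canonical embedding $\Frac(\cF,1)\into\Frac(\cF_\infty)$ is surjective.
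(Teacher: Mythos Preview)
Your proof is correct and shares the same core idea as the paper: for each right-branch vertex $v$, extend its maximal left-vine using Ore's property in $M$ so that the resulting word begins with $a$, then recolour via the induced skein relation. Your observations that the maximal left-vines are pairwise disjoint and that none of the leaves $\ell_v$ is the rightmost leaf are exactly what makes the substitutions independent.

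The one genuine difference lies in how the \emph{pair} $(t,s)$ is handled. The paper proceeds sequentially: first construct $f$ (with trivial last tree) making $t\circ f$ have an $a$-coloured right branch, then construct $g$ (again with trivial last tree) making $s\circ f\circ g$ have an $a$-coloured right branch; the ``last tree trivial'' observation guarantees that composing with $g$ does not disturb the already-fixed right branch of $t\circ f$. You instead construct a single forest $f$ that works for both trees simultaneously, resolving the case where a leaf $\ell$ carries constraints from both $t$ and $s$ by iterating Ore's property in $M$ (first solve for $u^t$, then extend to also solve for $u^s$). Both arguments are valid; yours is slightly more direct and avoids the two-pass trick, while the paper's avoids the case analysis on shared leaves. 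Note that setting $f_n$ to an $a$-right-vine is harmless but unnecessary: since no $\ell_v$ equals $n$, taking $f_n=I$ already yields a right branch consisting only of the (now recoloured) right-branch vertices of $t$.

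Your final paragraph on the isomorphism $\Frac(\cF,1)\simeq\Frac(\cF_\infty)$ correctly defers to the cited result, just as the paper does.
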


\begin{proof}
Let $M$ be a HP monoid with generator set $\sigma=S$ satisfying the Ore property and let $\cF$ be its associated forest category.
Fix an element $a\in S$ (which is a colour of $\cF$ by definition).
Let $t$ be a tree of $\cF$.
Fix a representative $t_0$ of $t$ in the free forest category over $\cF$.
For each vertex $v$ on the right branch starting at the root of $t_0$ consider the left branch $b_v$ of $t_0$ starting from this vertex. 
Let $n_v$ be the natural number so that $b_v$ ends at the $n_v$-th leaf of $t_0$.
Let $w_v$ be the word in the alphabet $S$ obtained by reading all the vertices on the left branch $b_v$ (starting with the label of $v$).
We identify $w_v$ with an element of the monoid $M$.
Write $f\leq f'$ for forests when there exists $g$ so that $f'=f\circ g.$
Since $M$ satisfies Ore's property there exists $z\in M$ so that $a\leq z$ and $w_v\leq z$ (i.e.~$z$ is a common right-multiple of $a$ and $w_v$).
Hence, there exists $x,y\in M$ so that $ax=w_vy$.
Consider the tree $T_v$  equal to a left-vine coloured by the word $y$,
$$ \text{i.e.~if } y=s_1\cdots s_k \text{ with } s_i\in S, 1\leq i\leq k, \text{ then } T_v:= (s_1)_1 \circ (s_2)_1 \circ\cdots\circ (s_k)_1.$$
Attach now $T_v$ to the $n_v$-th leaf of $t_0$ and do that for each vertex $v$ on the right branch (starting from the root) of $t_0$.
We obtain a new tree $t_1$ satisfying $t_1\geq t_0$ and there exists a tree $\ti t_1$ equivalent to $t_1$ in $\cF$ whose right branch starting from the root has all its vertices coloured by $a$. 
Moreover, note the \textit{crucial fact} that $t_1=t_0\circ f$ where $f$ is a forest whose last tree is trivial, i.e.~$f=f'\ot I$ where $f'$ is a forest. 

Consider now a pair of trees $(t,s)$ in $\cF$ with the same number of leaves. 
Let $(t_0,s_0)$ be representatives of $(t,s)$ in the free forest category over $\cF$.
Construct $t_1$ as above so that $t_1=t_0\circ f$.
Now, perform a similar construction applied to $s_0\circ f$ obtaining a tree $s_0\circ f\circ g$ that is equivalent in $\cF$ to a tree having only $a$ on its right branch starting at the root.
Moreover, by construction we have that the tree $g$ decomposes as $g=g'\ot I$. 
This implies that the trees $t_1$ and $t_1\circ g$ have the same right branch (starting from the root).
We deduce that the pair $(t_0\circ f\circ g, s_0\circ f\circ g)$ is equivalent in $\cF\times \cF$ to a pair of trees having only $a$ on their right branch starting from the root, i.e.~$\cF$ has the CGP for $a$.
We have proven the first statement.

Assume now that $M$ is a Ore monoid. The previous proposition implies that $\cF$ is a Ore FS category. 
Now, using \cite[Section 3.4]{Brothier22-FS} we deduce that the two groups $\Frac(\cF,1)$ and $\Frac(\cF_\infty)$ are isomorphic.
\end{proof}

\subsection{Adding permutations and braids}
Consider a FS category $\cF$. Following \cite[Section 3.5]{Brothier22-FS} we can define three new categories $\cF^T, \cF^V$, and $\cF^{BV}.$
The symbols $F,T,V,BV$ stand for the usual three Richard Thompson groups and $BV$ for the Brin braided Thompson group.

Elements of $\cF$ are (classes of) coloured forests $(f,c)$ with $c:\Ver(f)\to S$ the vertex colouring.
Elements of $\cF^T$ are triples $(f,c,\pi)$ where $\pi$ is a cyclic permutation of the leaves of $f$ that we represent as a diagram in $\R^2$ on top of $f$ with $n$ strings going from $(\ell,0)$ to $(\pi(\ell),1)$ if all the leaves of $f$ have altitude $0$.
As usual we may remove the data of $c$ and simply write $(f,\pi)$.
Moreover, we think of $\pi$ as an element of $\cF^T$ so that $(f,\pi)=f\circ \pi$.

Similarly, we can define $\cF^V$ and $\cF^{BV}$. The first admits {\it all} permutations (rather than only cyclic ones) and the second braids. Hence, an element of $\cF^{BV}$ is of the form $f\circ \beta$ where $f$ is a forest with $n$ leaves and $\beta$ a braid with $n$ strands stacked on top of $f$.

When $\cF$ is a Ore category, then so are $\cF^T,\cF^V,$ and $\cF^{BV}$.
If $G:=\Frac(\cF,1)$ is the fraction group of $\cF$, then we obtain three additional groups that we denote by $G^T,G^V,$ and $G^{BV}$.
We may write $G^F$ for $G$.
We will often provide statement for the whole four groups of above. In that case we write $G^X$ and let $X$ be equal to either $F,T,V,$ or $BV$.
We say that $\cF^X$ and $G^X$ are the $X$-version of $\cF$ and $G$.

Note that we have canonical maps $\cF\into\cF^T\into\cF^V$ and $\cF^{BV}\onto\cF^V$.
They functorially extend into a chain of group embeddings $G\into G^T\into G^V$ and a quotient map $G^{BV}\onto G^V$.
In particular, if $\cF$ is the monochromatic free FS category, then $G^X\simeq X$ for $X=F,T,V,BV$.

We will often prove statement for $G$ that extends rather trivially to the other versions $T,V,$ and $BV$. In that case we will leave the proof of the extension to the reader.
If deducing the extension from the $F$-case is not trivial, then we will always include an additional argument.

\section{Shape-preserving skein presentation}\label{sec:shape}

We present FS categories admitting a skein presentation where only colours are changing (but not the underlying monochromatic trees). In that case we observe that there is a surjective morphism on the usual category of monochromatic forests. 
This permits to decompose into a semi-direct product the associated FS groups.
We do the whole analysis in the $F$-case for simplicity. We end the section by briefly explaining the general $X$-case with $X=T,V,$ or $BV$.

\subsection{Decolouring map and presentation}
Consider a nonempty set $S$ and the free FS category $\cUF\la S\ra$ over this set.
Let $\cUF\la x\ra$ be the monochromatic FS category with single colour $x$.

Given any forest $(f,c)$ of $\cUF\la S\ra$ with $c:\Ver(f)\to S$ we define 
$$D(f,c):=(f,\tilde c)\in \cUF\la x\ra$$
where $\tilde c:\Ver(f)\to \{x\}$ is the constant map.

Here is an example of a coloured tree $f$ and its decolouration:
\[\includegraphics{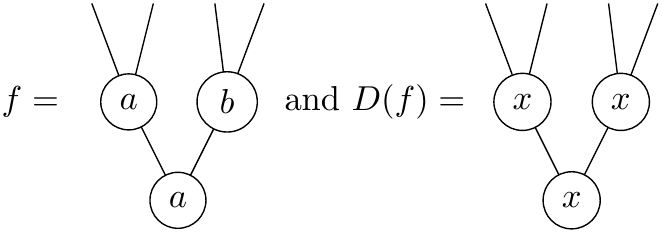}\]

\begin{definition}
The application $$D:\cUF\la S\ra\to \cUF\la x\ra$$ is called the \emph{decolouring} map or morphism.
It is a surjective morphism.
If $f,f'$ are two forests satisfying that $D(f)=D(f')$, then we say that $f$ and $f'$ have same {\it shape}.
\end{definition}

Consider now a skein presentation $(S,R)$ with associated category $\cF:=\FC\la S,R\ra.$ 

\begin{definition}
We say that $(S,R)$ is \emph{shape-preserving} when $D(t)=D(s)$ for all relation $(t,s)$ in $R$.
In that case we say that $\cF$ admits a shape-preserving skein presentation or is a shape-preserving FS category.
\end{definition}

Assume that $\cF$ admits a shape-preserving skein presentation.
Observe that all skein presentations of $\cF$ must be shape-preserving justifying the terminology.
Moreover, note that the morphism $D:\cUF\la S\ra\to\cUF\la x\ra$ factors through a (surjective) morphism 
$$D_\cF:\cF\to\cUF\la x\ra.$$
We will often write $D$ rather than $D_\cF$ in clear contexts.

\subsection{Colouring map}

Let $\cF$ be a FS category with colour set $S$.
Fix a colour $b\in S$ and a monochromatic forest $h\in \cUF\la x\ra$.
Define $$C_b^\cF(h)=(h,c_b)$$
where $c_b:\Ver(h)\to S$ is the constant map equal to $b$ everywhere.

\begin{definition}
The application $$C_b^\cF:\cUF\la x\ra\to \cF$$ is called the {\it $b$-colouring} map or morphism. 
\end{definition}
The map $C_b^\cF$ is indeed a morphism that is injective when $\cF$ is a Ore category, see \cite[Section 3.3]{Brothier22-FS}.

\begin{notation}
We will often suppress the superscript $\cF$ and will write $C_b$ for $C_b^\cF$.
Moreover, when $\cF$ is shape-preserving, then we may write $C_b$ for the composition $C_b\circ D$.
In that case, $C_b$ denotes an endomorphism of $\cF$ consisting of taking a coloured forest $(f,c)$ and recolouring all the vertices by $b$ that is 
$$(f,c)\mapsto (f,c_b) \text{ with } c_b:\Ver(f)\to S \text{ constant equal to } b.$$
\end{notation}

\subsection{Decomposition}

Assume $\cF:=\FC\la S|R\ra$ is a shape-preserving Ore FS category and fix a colour $a\in S$. 
Let $G:=\Frac(\cF,1)$ be the fraction group of $\cF$  and $F$ the one of $\cUF\la x\ra$ that we identify with the Thompson group.
The morphism $D:\cF\to\cUF\la x\ra$ extends uniquely (by functoriality) into a group morphism $G\to F$ via the formula:
$$[t,s]\mapsto [D(t),D(s)].$$
We denote this group morphism by $D$ as well. 
Similarly, $C_a$ defines a group morphism $F\to G$. 
We obtain the following decomposition of $G$.

\begin{proposition}
We have a short exact sequence:
$$1\to \ker(D)\to G\to F\to 1$$
which splits via the morphism $C_a:F\to G$.
Hence, the FS group $G$ is isomorphic to $\ker(D)\rtimes F$ for the action
$$g\cdot k:= C_a(g) k C_a(g)^{-1} \text{ for } g\in F, k\in \ker(D).$$
\end{proposition}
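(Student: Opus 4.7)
The plan is to reduce the proposition to the standard dictionary between split short exact sequences and semidirect product decompositions. The setup preceding the statement already tells us that $D$ and $C_a$ extend functorially to group morphisms between $G$ and $F$, so what remains is to verify (i) that $D:G\to F$ is surjective, (ii) that $D\circ C_a=\mathrm{id}_F$, and (iii) that the splitting yields the claimed conjugation action. Once these are in hand, the group-theoretic conclusion is a formal consequence.

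First I would double-check well-definedness of the two maps on fraction classes. For $D$, if $[t,s]=[t',s']$ in $G$ then, after expanding the generating equivalence $(t,s)\sim(t\circ p,s\circ p)$, functoriality of the category morphism $D_\cF:\cF\to\cUF\la x\ra$ (which exists precisely because $(S,R)$ is shape-preserving) gives $(D(t)\circ D(p),D(s)\circ D(p))$, hence $[D(t),D(s)]=[D(t'),D(s')]$ in $F$; the verification for $C_a$ is entirely analogous, using that $C_a:\cUF\la x\ra\to\cF$ is itself a category morphism and is injective when $\cF$ is Ore. Multiplicativity in both cases reduces to saying that the fraction product is defined by composition in the underlying (semi)groupoid, which is preserved by any functor.

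For (ii), observe that a monochromatic tree $t\in\cUF\la x\ra$ is first sent by $C_a$ to the same underlying tree with every vertex relabelled by $a$, and then $D$ relabels every vertex by $x$, returning $t$. Applied to a fraction $[t,s]\in F$ this gives $D(C_a([t,s]))=[t,s]$, so $D\circ C_a=\mathrm{id}_F$. This immediately implies that $D$ is surjective, so the sequence
\[1\to \ker(D)\to G\xrightarrow{D} F\to 1\]
is exact, and $C_a$ is a set-theoretic and group-theoretic section.

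Finally, (iii) is a textbook argument: any split short exact sequence $1\to K\to G\xrightarrow{\pi}Q\to 1$ with section $\iota:Q\to G$ produces a group isomorphism $K\rtimes Q\to G$, $(k,q)\mapsto k\,\iota(q)$, where the action of $Q$ on $K$ is $q\cdot k:=\iota(q)\,k\,\iota(q)^{-1}$. Specialising to $K=\ker(D)$, $Q=F$, $\iota=C_a$ gives the stated formula. The only potential obstacle is a psychological one: one must remember that although $D$ forgets all colour information and $C_a$ reinstates a single colour, the image $C_a(F)$ sits genuinely inside $G$ and normalises $\ker(D)$ by conjugation, which is automatic because $\ker(D)$ is normal. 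No hard computation is required.
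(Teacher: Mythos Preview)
Your proof is correct and follows essentially the same approach as the paper's: both verify that $D\circ C_a=\mathrm{id}_F$, conclude that the sequence is split exact, and then invoke the standard equivalence between split extensions and semidirect products. The only cosmetic difference is that the paper notes surjectivity of $D$ directly at the category level (since decolouring is obviously onto $\cUF\la x\ra$) rather than deducing it from the existence of the section $C_a$, but this is not a substantive distinction.
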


\begin{proof}
The map $D$ is surjective at the category level and thus extends into a surjective group morphism for the fraction groups.
Moreover, it has been proven that $C_a:F\to G$ is always injective for any FS group $G$, see \cite[Section 3.3]{Brothier22-FS}.
This implies that we have a short exact sequence as above.
It is obvious that the composition $D\circ C_a$ is the identity giving the split of the exact sequence.
The rest of the proposition follows easily.
\end{proof}

The decomposition
$$[t,s] = [t,C_a(t)] \circ [C_a(t),C_a(s)] \circ [C_a(s),s]$$
permits to appreciate the isomorphism $G\to \ker(D)\rtimes F$ where here $F$ is identified with pairs of trees uniquely coloured by $a$.

We will see in Section \ref{sec:decomposition} that when $\cF$ is constructed from a homogeneously presented monoid $M$, then we can further decompose $\ker(D)$ into an infinite direct sum $\oplus_{\Q_2} K_M$ indexed over the dyadic rationals of the unit interval $[0,1)$.
We may wonder what happens for arbitrary shape-preserving skein presentations and if $\ker(D)$ always decomposes in a nontrivial direct sum of groups. 
We observe below that a nontrivial decomposition always exists.
The proof is diagrammatic.

\begin{proposition}
The map
$$\theta:\ker(D)\times\ker(D)\to \ker(D), (g,h)\mapsto Y_a\circ (g\ot h)\circ Y_a^{-1}$$
is an isomorphism of groups.
\end{proposition}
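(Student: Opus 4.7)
My plan is to check four things in sequence: well-definedness, the homomorphism property, injectivity, and surjectivity of $\theta$.

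Well-definedness follows by applying the decolouring morphism $D$: since $D$ is functorial, $D(Y_a)=Y_x$, and $D(g)=D(h)=\id$ for $g,h\in\ker(D)$, we have $D(\theta(g,h))=Y_x\circ\id_2\circ Y_x^{-1}=\id$. For the homomorphism property, the interchange law $(g\ot h)\circ(g'\ot h')=(gg')\ot(hh')$ combined with inserting the identity $Y_a^{-1}\circ Y_a=\id_2$ between the two tensor factors yields $\theta(gg',hh')=\theta(g,h)\circ\theta(g',h')$.

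For injectivity, if $\theta(g,h)=e$, cancelling $Y_a^{\pm 1}$ in the fraction groupoid gives $g\ot h=\id_2$ in $\Frac(\cF,2)$. Choosing representatives $g=[t_g,s_g]$ and $h=[t_h,s_h]$, the equality $[t_g\ot t_h,s_g\ot s_h]=[I\ot I,I\ot I]$ produces a forest $p$ satisfying $(t_g\ot t_h)\circ p=(s_g\ot s_h)\circ p$. Left-cancellativity of $\cF$ then forces $t_g\ot t_h=s_g\ot s_h$, and the uniqueness of the tensor decomposition of a two-rooted forest (skein relations are pairs of trees localised in a single tensor factor and so cannot mix left and right) gives $t_g=s_g$ and $t_h=s_h$, hence $g=h=e$.

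Surjectivity is the crux. Given $k\in\ker(D)$ with any representative $[t,s]$, the equation $D([t,s])=e$ together with left-cancellativity of $F$ forces $D(t)=D(s)=:u$. If $u=I$ then $k=e=\theta(e,e)$. Otherwise the goal is to find an equivalent representative $[t',s']$ in which both $t'$ and $s'$ have a root caret coloured by $a$. Once achieved, I write $t'=Y_a\circ(T_1\ot T_2)$ and $s'=Y_a\circ(S_1\ot S_2)$; shape-preservation forces $D(T_i)=D(S_i)$, so $g:=[T_1,S_1]$ and $h:=[T_2,S_2]$ lie in $\ker(D)$, and a short groupoid calculation confirms $\theta(g,h)=[t',s']=k$.

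The main obstacle is producing the root-$a$ normalisation above. My approach would be to use Ore's property to align the root caret of $t$ with $Y_a$ by inserting a common right-multiple of $Y_c$ and $Y_a$, where $c$ is the root colour of $t$, and similarly to align $s$, then to merge the two alignments into a single composable forest. This is essentially a colouring generating property at $a$ restricted to the root caret; although the CGP is only proved for $\cF_M$ in the next section, an analogous iterative Ore-based diagrammatic argument should yield the version we need for arbitrary shape-preserving Ore FS categories, matching the ``diagrammatic proof'' alluded to in the preceding discussion.
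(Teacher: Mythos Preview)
Your well-definedness, homomorphism, and injectivity arguments are correct and match the paper's reasoning.

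For surjectivity, however, you are making it harder than necessary, and your specific suggestion does not quite work. Taking a common right-multiple of $Y_c$ and $Y_a$ (where $c$ is the root colour of $t$) is the wrong move: knowing $Y_c\circ g_c=Y_a\circ g_a$ for some forests $g_c,g_a$ tells you nothing about how to grow $t=Y_c\circ(t_1\ot t_2)$ into a $Y_a$-rooted tree unless $(t_1\ot t_2)$ happens to factor through $g_c$, which it need not. No CGP-type argument is needed either.

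The paper's argument is a direct two-step application of Ore's property to $Y_a$ and the \emph{whole} trees. Given $k=[u,v]\in\ker(D)$, apply Ore to the pair $(Y_a,u)$ to obtain forests $f,f'$ with $Y_a\circ f=u\circ f'$; then $k=[Y_a\circ f,\,v\circ f']$. Apply Ore again to the pair $(Y_a,\,v\circ f')$ to obtain $p,p'$ with $Y_a\circ p=v\circ f'\circ p'$; then $k=[Y_a\circ f\circ p',\,Y_a\circ p]$. Writing $f\circ p'=t\ot s$ and $p=t'\ot s'$, the condition $k\in\ker(D)$ forces $D(t\ot s)=D(t'\ot s')$, hence $D(t)=D(t')$ and $D(s)=D(s')$; in particular $t$ and $t'$ have the same number of leaves (the paper flags this as the crucial point), so $g=[t,t']$ and $h=[s,s']$ are well-defined elements of $\ker(D)$ with $\theta(g,h)=k$. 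Your idea of first normalising $t$ and $s$ separately and then ``merging the two alignments'' is morally what this sequential Ore application accomplishes, but the paper's phrasing avoids the detour through root colours and CGP entirely.
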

\begin{proof}
We first explain what the map $\theta$ is using fractions.
Consider $g,h\in\ker(D)$. 
There exists two pairs of trees $(t,t')$ and $(s,s')$ so that $g=[t,t']$ and $h=[s,s']$.
Moreover, note that $t$ and $t'$ (resp.~$s$ and $s'$) have same shape.
Now, $\theta(g,h)$ is the fraction $$[Y_a\circ(t\ot s), Y_a\circ (t'\ot s')]$$ where recall $t\ot s$ is the forest with two roots whose first tree is $t$ and second $s$.
It is obvious that $Y_a\circ(t\ot s)$ and $Y_a\circ (t'\ot s')$ have same shape implying that $\theta$ is valued in $\ker(D)$.
By definition of the calculus of fractions we have that $\theta$ is indeed a group morphism.
If $\theta(g,h)=e$, then $Y_a\circ(t\ot s)=Y_a\circ(t'\ot s')$.
Since $\cF$ is left-cancellative we deduce that $t\ot s=t'\ot s'$ meaning that $t=t'$ and $s=s'$ that is $(g,h)=(e,e).$
Hence, $\theta$ is injective.\\
Let us prove that $\theta$ is surjective which is slightly surprising (indeed if we extend $\theta$ into a map from $G\times G$ to $G$ in the obvious way, then it is no longer surjective).
Consider $k=[u,v]\in\ker(D)$. 
By Ore's property applied to $Y_a$ and $u$ there exist some forests $f,f'$ so that $Y_a\circ f = u\circ f'$.
Hence, $k=[u\circ f', v\circ f']=[Y_a\circ f, v\circ f']$.
By using again Ore's property applied to $Y_a$ and $v\circ f'$ we find $p,p'$ forests so that $Y_a\circ p = v\circ f'\circ p'$.
We deduce that $k=[Y_a\circ f\circ p', Y_a\circ p].$
Now, $f\circ p'$ decomposes as a tensor product of two trees $t\ot s$ and likewise $p=t'\ot s'.$
Since $k\in\ker(D)$ we necessarily have that $D(t\ot s)=D(t'\ot s')$ and thus $D(t)=D(t')$ and $D(s)=D(s')$.
Therefore, $t$ has same number of leaves than $t'$ (which is the crucial part of the argument) and $[t,t']\in\ker(D)$.
We deduce that $k=\theta(g,h)$ where $g=[t,t']$ and $h=[s,s'].$
\end{proof}

By slightly adapting the proof of above we obtain that for any tree $t$ with $n$ leaves we have an isomorphism 
$$\theta_t:\ker(D)^n\to \ker(D), (g_1,\cdots,g_n)\mapsto t\circ (g_1,\cdots,g_n)\circ t^{-1}.$$

\subsection{Extension to the other Thompson's groups}
The whole discussion extends easily to the three other cases: $X=T,V,BV$.
Indeed, observe that the maps $D$ and $C$ extend (functorially) to the $X$-versions as:
$$D^X:\cF^X\onto\cUF\la x\ra^X \text{ and } C_b^X:\cF\la x\ra^X\into \cF^X.$$
The first is surjective while the second is injective.
Observe now that the fraction group of $\cUF\la x\ra^X$ is isomorphic to $X$.
Moreover, note that $\ker(D^X)=\ker(D)$.
We deduce a new split exact sequence
$$1\to \ker(D)\to G^X \to X\to 1.$$
In particular, $G^X$ is isomorphic to $\ker(D)\rtimes X$ for $X=F,T,V,BV$.

\section{Forest-skein groups associated to homogeneously presented monoids}\label{sec:decomposition}
The aim of this section is to provide an explicit description of $\Frac(\cF_M,1)$: the fraction group of a FS category built from a monoid $M$. 
The description will be done using wreath products. 
It will permit to show that various properties satisfied by $\Frac(M)$ passes through the much larger group $\Frac(\cF_M,1)$. Moreover, we will be able to fully classify this specific class of FS groups in terms of $\Frac(M)$.
Note that all FS categories $\cF_M$ are in particular shape-preserving. 

In all this section $M$ is a Ore HP monoid with presentation $\pi=(\sigma,\rho)$ and associated fraction group $\Frac(M)$.

\subsection{Decomposition of $\Frac(M)$}
 
Using the length map we are going to decompose $\Frac(M)$ into a semi-direct product. 
The kernel of the length map at the group level will be a key component in the description of the FS group $\Frac(\cF_M,1)$.
Recall that the length map $\lambda:M\to\N$ extends into a surjective group morphism 
$$\lambda:\Frac(M)\to\Z, [u,v]\mapsto [\lambda(u),\lambda(v)].$$
Here, $[\lambda(u),\lambda(v)]=\lambda(u)-\lambda(v)$ via the usual identification of $\Frac(\N)$ and $\Z$.
Take any letter $a\in\sigma$ and note that 
$$\Z\to \Frac(M), \ 1\mapsto a$$
is a right-inverse of $\lambda$.
We deduce the following decomposition.

\begin{proposition}
Denote by $K_M$ the kernel of $\lambda:\Frac(M)\to\Z$ and fix a letter $a\in\sigma$.
We have a split exact sequence 
$$1\to K_M\to \Frac(M)\to \Z\to 1$$
giving a group isomorphism
$$\Frac(M)\simeq K_M\rtimes \Z$$
where the action $\Z\act K_M$ is given by the formula:
$$n\cdot k:= a^n k a^{-n} \text{ for } k\in K_M \text{ and } n\in\Z.$$
\end{proposition}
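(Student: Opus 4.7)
The proof is essentially a routine application of the splitting lemma, so my plan is primarily to assemble pieces already established in the excerpt and check the section map and the resulting action formula.

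First, from the earlier remark we already have that the length morphism $\lambda:\Frac(M)\to\Z$ is a surjective group morphism whose kernel is $K_M$ by definition. This immediately gives the short exact sequence
\[1\to K_M\to\Frac(M)\xrightarrow{\lambda}\Z\to 1.\]
To split it, I would define $s:\Z\to\Frac(M)$ by $n\mapsto a^n$ (interpreting $a^n$ for $n<0$ in $\Frac(M)$ via formal inverses). This is well-defined and a group morphism because $\Z$ is free on the generator $1$, so there is a unique morphism sending $1\mapsto a$. Since $a\in\sigma$ is a letter, its length is $1$, hence $\lambda(s(1))=\lambda(a)=1$, and by multiplicativity $\lambda\circ s=\id_\Z$. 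This shows the sequence splits.

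From the standard equivalence between split short exact sequences of groups and semidirect products, we conclude $\Frac(M)\simeq K_M\rtimes\Z$ where the action of $\Z$ on $K_M$ is given by conjugation via the section: for $n\in\Z$ and $k\in K_M$,
\[n\cdot k \;=\; s(n)\, k\, s(n)^{-1} \;=\; a^n k a^{-n}.\]
One should check here that $a^n k a^{-n}$ indeed lies in $K_M$, which follows from $\lambda(a^n k a^{-n})=n+\lambda(k)-n=0$ since $K_M=\ker(\lambda)$ is normal in $\Frac(M)$.

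There is no real obstacle in this statement; the only mild subtlety is making sure the map $\Z\to\Frac(M)$, $n\mapsto a^n$, is genuinely a group morphism valued in $\Frac(M)$ (and not merely a monoid map from $\N$), but this is automatic since $\Frac(M)$ is a group and $\Z$ is free on one generator. The resulting isomorphism $\Frac(M)\simeq K_M\rtimes\Z$ will then be used in the next section as the first step in identifying the FS group with a twisted wreath product, with the copy of $\Z$ here being absorbed into a copy of the Thompson group.
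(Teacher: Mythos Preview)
Your proof is correct and follows essentially the same approach as the paper: the paper simply observes that $\Z\to\Frac(M),\ 1\mapsto a$ is a right-inverse of $\lambda$ and then states the proposition as an immediate consequence, which is exactly what you have spelled out in detail.
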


\begin{example}
\begin{enumerate}
\item If $M=\Mon\la a\ra=\N$ is the free monoid in one generator, then $\Frac(M)=\Z$ and $K_M$ is the trivial group.
\item More generally, consider $M=\N^\sigma$ the free abelian monoid over a nonempty set $\sigma$ equipped with its classical presentations 
$$\Mon\la \sigma| ab=ba, a,b\in \sigma\ra.$$
Its fraction group is $\Z^\sigma$ and decomposes as a direct product $K_M\times \Z$ where $a\in \sigma$ is a fixed generator, $\Z$ is generated by $a$, and $K_M=\Z^{\sigma\setminus\{a\}}$.
\item Consider $$M=\Mon\langle a,b| a^2=b^2,ab=ba\rangle.$$
This is a Ore HP monoid with fraction group $G$.
Note that this is an example of a thin but non-Gaussian monoid in the sense of Dehornoy, see \cite{Dehornoy02}.
We have that $K_M=\{e,ab^{-1}\}\simeq \Z/2\Z$.
We obtain that $G\simeq \Z/2\Z\times\Z$ with $\Z/2\Z$ corresponding to $K_M$ and $\Z$ to the cyclic subgroup generated by $a$.
\item
Consider the Thompson monoid with its classical infinite presentation:
$$F^+:=\Mon\la x_n: n\geq 1 | x_q x_j = x_j x_{q+1}, 1\leq j<q\ra.$$
This is a Ore HP Monoid.
Note that $F^+$ is equal to the forest monoid of the monochromatic forest category and $x_n$ corresponds to the elementary forest having a single caret at the $n$-th root.
If we think of $F$ as the group of classes of pairs of binary trees, then we have the following group isomorphism:
$$\Frac(F^+)\to F, x_n\mapsto [t_{n+1} \circ x_n, t_{n+2}]$$
where $t_{n+1}=x_{1,1}x_{2,2}\cdots x_{n,n}$ is the tree with $n+1$ leaves with a long right-branch, i.e.~a right-vine with $n$ carets.
After identifying $\Frac(F^+)$ with $F$ under this isomorphism we find that the kernel $K$ of the length group morphism $F\to \Z$ is equal to the set of all $g\in F$ for which there exists a pair of trees $(t,s)$ so that $g=[t,s]$ and the length between the root and the right-most leaf of $t$ is equal to the length between the root and the right-most leaf of $s$.
If we identify $F$ in the usual way as a subgroup of homeomorphisms of the unit interval, then this means that the slope of $g$ at $1$ is equal to $1.$
Therefore, $$K=\{ g\in F:\ g'(1)=1\}.$$
Now, a section of $F\to \Z$ is given by the group morphism
$$\Z\to F, 1\mapsto x_1.$$
Hence, $F\simeq K\rtimes \Z$ where $\Z\act K$ is given by conjugation by the classical first generator $x_1$ of Thompson's group $F$.
\item 
For each $n\geq 2$ we consider the Artin braid monoid over $n$ strands with its classical presentation
$$B_n^+:=\Mon\la g_1,\cdots,g_n| g_k g_{k+1} g_k = g_{k+1} g_k g_{k+1} \ , \ g_ig_j=g_j g_i\ra$$
where $$1\leq k\leq n-2, 1\leq i,j\leq n-1, \text{ and } |i-j|\geq 2.$$
It is a Ore HP monoid with fraction group $B_n=\Frac(B_n^+)$ the Artin braid group over $n$ strands. 
The kernel $K$ of the length morphism is the subgroup of braids that have equal number of under and over crossings.
Consider the copy of $\Z$ inside $B_n$ that is generated by $g_1.$
We have $B_n\simeq K\rtimes \Z$ where $\Z\act K$ consists in conjugating $K$ by $g_1$, i.e.~taking a braid of $K$ and adding on top of it one overcrossing of the first two strands and one undercrossing on the bottom for the same two first strands.
\end{enumerate}
\end{example}

\subsection{Decomposition of forest-skein groups}\label{sec:decomposition}

We recall some notations before stating the structural theorem describing $\Frac(\cF,1).$

Let $\Q_2=[0,1)\cap \Z[1/2]$ be the dyadic rationals in $[0,1)$ (sometime identified with the dyadic rationals in the unit torus or with $\Z[1/2]/\Z$).
The classical action $F\act [0,1)$ restricts into $F\act \Q_2$.
We may as well identify $\Q_2$ with the finitely supported sequences of the Cantor space $\fC:=\{0,1\}^\N$ via the map $$\fC\onto [0,1], (x_n)_n\mapsto \sum_n \frac{x_n}{2^n}.$$
The usual action $V\act \fC$ (consisting in changing finite prefixes) restricts to an action $V\act \Q_2$. 
Finally, recall that there is a canonical map $BV\onto V$ from the braided Thompson group onto $V$. This maps consists in changing over and under crossings into a single crossing.
This produces an action $BV\act \Q_2$ obtained from the composition $BV\onto V\to \Aut(\Q_2)$.

\begin{definition}
If $A,B$ are groups, $B\act Q$ is an action on a set $Q$, and $\tau:B\times Q\to \Aut(A)$ is a cocycle valued in the automorphism group of $A$, then we write 
$$A\wr_{Q,\tau}B=\oplus_Q A\rtimes  B$$ for the \emph{twisted permutational restricted wreath product}.
Here, $\oplus_Q A$ denotes the set of finitely supported maps $h:Q\to A$ and $B\act \oplus_Q A$ is the generalised twisted shift-action described by the formula:
$$b\cdot h(q):= \tau(b,q)(h(b^{-1}q)), h\in \oplus_Q A, b\in B, q\in Q.$$
The map $\tau:B\times Q\to\Aut(A)$ is called the \emph{twist}. If $\tau$ is trivial, then we say that the wreath product is {\it untwisted}.
We may suppress the subscripts $Q$ and $\tau$ in clear contexts.
\end{definition}

We are now ready to state and prove our structural theorem for FS groups constructed from HP monoids.

\begin{theorem}\label{theo:FCWP}
Consider a Ore HP monoid $M=\Mon\la \sigma|\rho\ra$, a generator $a\in \sigma$, the associated FS category $\cF$ and FS groups $G^X$ with $X=F,T,V,BV$.
Recall that $K_M$ denotes the kernel of the length-map $\Frac(M)\onto \Z$.

There is a group isomorphism
$$\Frac(\cF,1)\simeq K_M\wr_{\Q_2,\tau}X$$
where $K_M\wr_{\Q_2,\tau}X$ is the twisted wreath product described by the formula:
$$(h\cdot k)(q) = a^{-\log_2(h'(h^{-1} q))}\cdot k(h^{-1} q) \cdot a^{\log_2(h'(h^{-1} q))} \text{ for all } h\in F, q\in \Q_2, k\in K_M.$$

In particular, if the generator $a\in\sigma$ is central in $\Frac(M)$, then the twist is trivial.
\end{theorem}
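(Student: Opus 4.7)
The plan is to combine the semidirect product decomposition from Section~\ref{sec:shape} with an explicit identification of $\ker(D)$ as a permutational direct sum indexed by $\Q_2$. Because $\cF_M$ admits a shape-preserving skein presentation (the monoid relations are left-vines of equal length), the proposition at the end of Section~\ref{sec:shape} yields a split short exact sequence $1\to\ker(D)\to G^X\to X\to 1$ with splitting $C_a$ and decomposition $G^X\simeq\ker(D)\rtimes X$, the action being conjugation by $C_a(v)$. Hence it suffices to establish an $X$-equivariant isomorphism $\ker(D)\simeq\oplus_{\Q_2}K_M$ realising the twist $\tau$.

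I would construct this isomorphism via a direct-limit procedure in the spirit of the Jones-actions framework of \cite{Brothier22}. First, there is a natural embedding $\iota:K_M\into\ker(D)$ defined by $[u,v]\mapsto[\ti u,\ti v]$ (valid because $|u|=|v|$ implies $\ti u,\ti v$ have the same shape). Composed iteratively with the self-similar isomorphism $\theta:\ker(D)^2\simeq\ker(D)$ from Section~\ref{sec:shape}, this associates to every all-$a$ monochromatic tree $T$ with $n$ leaves a homomorphism $K_M^n\to\ker(D)$ that ``places $\iota(g_j)$ at the $j$-th leaf of $T$''. The essential compatibility is that refining $T$ at leaf $\ell$ by an $a$-caret corresponds, at the level of tuples, to the cloning map
$$(g_1,\ldots,g_n)\longmapsto(g_1,\ldots,g_{\ell-1},\al(g_\ell),e,g_{\ell+1},\ldots,g_n),\quad \al:=\ad(a^{-1});$$
this follows from the equality $[u a,v a]=[u,v]$ in $\Frac(M)$ combined with the left-vine structure of the image of $\iota$. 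Setting the depth-normalised value $\ti g_\ell:=\al^{-d_\ell}(g_\ell)=a^{d_\ell}g_\ell a^{-d_\ell}$ then yields an invariant under cloning at the same dyadic position (since $\al^{-(d+1)}(\al(g))=\al^{-d}(g)$ while the new right-child receives value $e$). The assignment $q_{\ell_j}\mapsto\ti g_j$ therefore defines a finitely supported map $\Q_2\to K_M$ depending only on the class in $\ker(D)$. Surjectivity is clear because any finite subset of $\Q_2$ lies in the leaves of a sufficiently refined all-$a$ tree, and injectivity follows from the left-cancellativity of $\cF$ together with the fact that skein moves $\ti u\sim\ti v$ with $u=v\in M$ only replace a tuple component by a $K_M$-equivalent element.

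Finally I would verify that conjugation by $C_a(v)$ translates to the twisted wreath action. For $v\in F$, the underlying all-$a$ tree of $v$ permutes leaves of a common refinement according to the canonical action $v:\Q_2\to\Q_2$, sending a leaf at position $v^{-1}q$ of depth $d_1$ to one at position $q$ of depth $d_2=d_1-\log_2 v'(v^{-1}q)$; the underlying $K_M$-value $g$ is carried along unchanged. The depth-stabilisation thus transforms as
$$\ti g(q)=a^{d_2}g a^{-d_2}=a^{-m}(a^{d_1}g a^{-d_1})a^m=a^{-m}\,\ti g(v^{-1}q)\,a^m,\qquad m:=\log_2 v'(v^{-1}q),$$
exactly the formula of the theorem. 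The cases $X=T,V$ are identical (permutations of leaves with the same slope formula) and $X=BV$ follows via the canonical quotient $BV\onto V$. If $a$ is central in $\Frac(M)$ then every $\ad(a^{-m})$ is trivial on $K_M$ and the wreath product is untwisted. The main obstacle will be establishing well-definedness on the direct limit under \emph{all} possible refinements (not just $a$-carets) and skein moves simultaneously; the homogeneity of the presentation of $M$ together with the colouring generating property established earlier is precisely what reduces general refinements to the cloning picture through the interplay with skein moves.
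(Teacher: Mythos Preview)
Your approach differs genuinely from the paper's. The paper defines the isomorphism in the forward direction $k:\ker(D)\to\oplus_{\Q_2}K_M$ by an explicit formula: for each leaf $\ell$ of a representative $(x,y)$ it reads the colour-words $m_\ell,n_\ell$ along the maximal left-geodesic ending at $\ell$ in $x$ and in $y$, and sets $k_{[x,y]}(S(\ell))=a^{R_\ell}m_\ell n_\ell^{-1}a^{-R_\ell}$, where $R_\ell$ is the depth of the origin of that geodesic. This is defined on every element of $\ker(D)$ from the start, and the twist is obtained by a direct two-case computation comparing $R_\ell$ before and after conjugation by $C_a(h)$. You instead build the inverse map as a direct limit of embeddings $\Phi_T:K_M^n\to\ker(D)$ indexed by all-$a$ trees $T$---precisely the cloning-system/Jones-action viewpoint underlying Remark~\ref{rem:twist}---and the twist then drops out almost for free from the depth bookkeeping. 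The two constructions give the same isomorphism (your cloning identity $\iota(g)=Y_a\circ(\iota(\al(g))\otimes e)\circ Y_a^{-1}$ matches the paper's $k$ on elementary inputs), and your packaging makes the link to \cite{Brothier22} explicit.

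There is, however, a real step you flag but do not carry out: the surjectivity of $\Phi$ onto $\ker(D)$. You need every $[x,y]\in\ker(D)$ to admit a representation $\theta_T(\iota(g_1),\dots,\iota(g_n))$ with $T$ an all-$a$ tree and $g_j\in K_M$. Your sentence ``surjectivity is clear because any finite subset of $\Q_2$ lies in the leaves\dots'' addresses surjectivity onto $\oplus_{\Q_2}K_M$, which is the wrong direction here; and the CGP does not obviously supply what you need, since it produces representatives with all-$a$ \emph{right branch}, not an all-$a$ skeleton with left-vines attached. One clean way to close the gap is an induction on the number of leaves of $x$: writing $x=Y_c\circ(x_1\otimes x_2)$ one has
\[
[x,C_a(x)]=\iota([c,a])\cdot\theta_{Y_a}\bigl([x_1,C_a(x_1)],[x_2,C_a(x_2)]\bigr),
\]
and the image of $\Phi$ is a subgroup closed under $\theta_{Y_a}$; since $\ker(D)$ is generated by the elements $[x,C_a(x)]$ this finishes it. The paper's forward formula avoids the issue entirely by being defined on arbitrary $[x,y]$ without needing any such normal form.
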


\begin{proof}
We treat the $F$-case. The other cases easily follow as we will briefly explain at the end of the proof.
We start by explaining how to associate a dyadic rational to the leaf of a tree.
Recall that the vertex set $\Ver(t_\infty)$ of the infinite rooted regular binary tree $t_\infty$ is in bijection with the set $SDI$ of standard dyadic intervals of $[0,1]$ via a map $\nu\mapsto I_\nu$.
The root is sent to $[0,1]$, and if $\nu_0,\nu_1$ are the left and right immediate children of a vertex $\nu$, then $I_{\nu_0}$ and $I_{\nu_1}$ are the first and second half of $I_\nu$.
Now, consider the map $$S:\Ver(t_\infty)\to\Q_2, \nu\mapsto \min(I_\nu)$$
that sends a vertex $\nu$ of $t_\infty$ to the first point of the sdi associated to $I_\nu$. 
In particular, note that $S(\nu_0)=S(\nu)$ and $S(\nu_1)$ is the midpoint of $I_\nu$ for all vertex $\nu$.
The map $S$ is surjective and two vertices are sent to the same dyadic rational if and only if the geodesic path between them is only made of left-edges.

We will now construct an isomorphism from $\ker(D)$ to $\oplus_{\Q_2}K_M$.
Consider $g=[x,y]\in \ker(D)$ where we fix a representative $(x,y)$.
Hence, $x,y$ are trees with vertices coloured by elements of the generator set $\sigma$ of the monoid $M$.
Let $\ell$ be a leaf of $x$ and consider the longest geodesic path in $x$ made of only left-edges and ending at $\ell$.
We call it the {\it maximal left-geodesic} of $\ell$, denoted $mlg(x,\ell)$ or simply $mlg(\ell)$, and say that the first vertex of this path is its {\it origin}.
Now, let us read the decoration of this path which gives us a word $m_\ell$ in the alphabet $\sigma$. 
The order is such that the right-most letter of $m_\ell$ corresponds to the colouring of the closest vertex to the leaf $\ell$ in $x$.
Do the same for the leaf of $y$ corresponding to $\ell$ obtaining a word $n_\ell$.
The leaf $\ell$ corresponds to a unique vertex of $t_\infty$ and thus to a dyadic rational $S(\ell)$.

Define now $R_\ell$ to be the number of edges between the root of the tree $t$ and the origin of the maximal left-geodesic of $\ell$ inside $x$. 
In other words, 
$$R_\ell=d_x(\ell,root)-|mlg(x,\ell)|$$ 
where $d_x(\ell,root)$ is the tree-distance between $\ell$ and the root of $x$ and where $|mlg(x,\ell)|$ is the length of $mlg(x,\ell)$.

Define the map
$$k_{x,y}: \Q_2\to \Frac(M),\ S(\ell)\mapsto a^{R_\ell} \cdot m_\ell\cdot n_\ell^{-1} \cdot a^{-R_\ell}$$
supported on the finite set 
$$\{S(\ell):\ \ell\in \Leaf(x)\}.$$

Here is an example with
\[\includegraphics{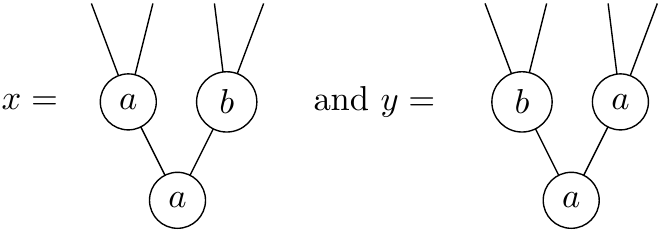}\]

The lists of words $m_\ell$ and $n_\ell$ and the list of values $R_\ell$ (where leaves are ordered from left to right) are:
$$(aa,e,b,e), (ab,e,a,e) \text{ and } (0,2,1,2).$$

We deduce that $k_{x,y}$ is the map supported in $\{0,1/4,1/2,3/4\}$ taking the following values:
$$k_{x,y}(0)=aa (ab)^{-1}, k_{x,y}(1/4)=e, k_{x,y}(1/2)=a^1\cdot ba^{-1}\cdot a^{-1}=aba^{-2}, \text{ and } k_{x,y}(3/4)=e.$$

By assumption, $D(g)=e$ meaning that $D(x)=D(y)$ and thus the underlying uncoloured tree of $x$ is equal to the one of $y$.
This implies that $m_\ell$ and $n_\ell$ have the same length and thus 
$$m_\ell \circ n_\ell^{-1}\in\ker(\Frac(M)\onto\Z)=K_M.$$
Since $K_M$ is closed under conjugation by $a$ we deduce that 
$k_{x,y}\in \oplus_{\Q_2}K_M$.
It is not hard to see that $k_{x,y}$ only depends on $g$ and note on the choice of the representative $(x,y).$
For instance, see that if we add a caret of colour $b$ to $x$ and $y$ at the leaf $\ell$, then we obtain two new leaves $\ell_0,\ell_1$ that are immediate left and right children of $\ell$.
Moreover, $S(\ell_0)=S(\ell)$, $R_{\ell_0}=R_\ell$ and
$$m_{\ell_0} n_{\ell_0}^{-1} = (m_\ell \cdot b ) (n_\ell \cdot b)^{-1} = m_\ell n_\ell^{-1}.$$ 
Further, $m_{\ell_1}=n_{\ell_1}=e$ because the mlg of $\ell_1$ is trivial.
Therefore, if $x',y'$ are the trees obtained by adding the $b$-caret we obtain that $k_{x,y}=k_{x',y'}.$
We now write $k_g$ instead of $k_{x,y}$.
We have defined a map
$$k:\ker(D)\to\oplus_{\Q_2}K_M, g\mapsto k_g.$$

{\bf Claim: The map $k$ is a group isomorphism.}

We have shown that $k$ is well-defined.
Consider $g=[x,y]$ and $h=[w,z]$ in $\ker(D).$
Using Ore's property we may assume that $y=w$ up to growing our tree-representatives.
Note that $D(x)$ is equal to $D(y)$ and $D(z)$ since $D(g)=D(h)=e$.
For each leaf $\ell$ of $x$ we write $m^x_\ell$ the element of $M$ obtained by reading colours on $x$ as explained above and similarly define $m^y_\ell$ and $m^z_\ell$.
Similarly, we define $R^x_\ell$ and note that $R^x_\ell=R^y_\ell=R^z_\ell$ since $R^x_\ell$ only depends on the shape of $x$.
We have that $k_g$ and $k_h$ are supported on $\{S(\ell):\ \ell\in\Leaf(t)\}$.
Moreover, note that 
\begin{align*}
k_g(S(\ell)) \circ k_h(S(\ell)) & = [a^{R^x_\ell} \cdot m^x_\ell (m^y_\ell)^{-1} a^{-R^x_\ell}] \circ [ a^{R_\ell^y} \cdot m_\ell^y (m_\ell^z)^{-1}\cdot  a^{-R_\ell^y}] \\
& = a^{R_\ell^x} \cdot m^x_\ell \circ (m^z_\ell)^{-1} \cdot a^{-R_\ell^x} = k_{g\circ h}(S(\ell))
\end{align*}
since $g\circ h = [x,z].$
This proves that $k$ is a group morphism.

Consider $r\in\Q_2, p\in K_M$ and the element $f_{r,p}:\Q_2\to K_M$ supported at $r$ taking the value $p$.
We are going to construct $[x,y]\in \ker(D)$ satisfying $k_{[x,y]}=f_{r,p}.$

Define the map $R:\Q_2\to \N$ as follows. First, identify $\Q_2$ with all the finitely supported binary sequence in $0,1$ using the map 
$$(q_1,q_2,\cdots)\mapsto \sum_{c=1}^\infty \frac{q_c}{2^j}.$$
Given a finitely supported sequence $q=(q_1,q_2,\cdots )$ we write $R(q)$ for the the largest $c$ so that $q_c=1$.
Note that if $\ell$ is the leaf of a tree $t$ and that we identify $\ell$ with a binary sequence $q_\ell$ of $0,1$ in the usual way (with $0$ standing for left-edges and $1$ for right-edges), then $R(q_\ell)=R^t_\ell$ where $R^t_\ell$ is the natural number previously used to define the map $k$.

We consider $R(r)$ and the element $$\hat p:= a^{-R(r)}\cdot p \cdot a^{R(r)}.$$ 
We have that $\hat p$ belongs to $K_M$.
Therefore, there exists a pair of elements $(m,n)$ in the monoid $M$ such that $\hat p = m\circ n^{-1}$ and such that $m$ and $n$ have same word-length equal to a certain $L.$
Since $S:\Ver(t_\infty)\to \Q_2$ is surjective there exists $\nu$ so that $S(\nu)=r$.
Up to taking a left-child of $\nu$ we may assume that the maximal left-geodesic (mlg) of $\nu$ is longer than $L$.
Consider now an uncoloured tree $t$ so that $\nu$ is one of its leaf.
Colour the maximal left-geodesic of $\nu$ inside $t$ by the word $m \cdot a^k$ where $k=L-|mlg(t,\nu)|\geq 0$.
Colour all other vertices of $t$ with $a$. We obtain a coloured tree $x$.
Similarly colour $t$ into a coloured tree $y$ so that the mlg of $\nu$ inside $t$ is coloured by $n\cdot a^k$ and all other vertices by $a$.
We obtain that if $g=[x,y]$, then 
$$k_g(S(\nu)) = k_{x,y}(S(\nu)) = a^{R(r)}\cdot m a^k (n a^k)^{-1} a^{-R(r)} = a^{R(r)}\cdot m n^{-1} a^{-R(r)}=p$$
and $k_g$ takes trivial values at any other dyadic rationals.
Therefore, $k_g=f_{r,p}$ and since $\{f_{r,p}:\ r\in\Q_2, p\in K_M\}$ generates the group $\oplus_{\Q_2}K_M$ we have proven that $k$ is surjective.

It remains to show that $k$ is injective.
Assume that $g=[x,y]\in \ker(D)$ is in the kernel of $k$.
In particular, $k_g(S(\ell))=e$ for all leaf of $x$, i.e.~$m^x_\ell=m^y_\ell$.
This implies that $x$ and $y$ have the same vertex-colouring (modulo the FS relations of $\cF$).
Since $x$ and $y$ have same shape we deduce that $x=y$ and thus $g=e$.
This proves the claim.

We now analyse the action $F\act\oplus_{\Q_2}K_M$ obtained by pulling back the action $F\act\ker(D)$ via the isomorphism $k:\ker(D)\to \oplus_{\Q_2}K_M$.
Recall that we have fixed a colour $a\in\sigma$ and we have considered the embedding 
$$C_a:F\to \Frac(\cF,1)$$
where $C_a$ is the colouring map.
Consider $g=[x,y]\in \ker(D)$ and $h\in C_a(F)$ so that $h=[C_a(t),C_a(s)]$ where $t,s$ are two monochromatic trees.
We want to compare $k_g$ with $k_{hgh^{-1}}$.
The Ore property assures the existence of two forests $u,v$ satisfying that 
$$[x,y]=[C_a(s)\circ u, C_a(s)\circ v].$$
Observe now that 
$$hgh^{-1} = [C_a(t)\circ u, C_a(t)\circ v].$$

{\bf Notations.} 
Let $\ell$ be the $i$-th leaf of $C_a(s)\circ u$.
Note that $u$ is a forest that is stacked on top of the tree $C_a(s)$.
Hence, we can express $u$ as a list of trees $(u_1,\cdots,u_d)$ where $d$ is the number of leaves of $C_a(s)$.
Let $1\leq j\leq d$ be the natural number satisfying that the leaf $\ell$ belongs to $u_j$.
We write $r$ for the $j$-th leaf of $C_a(s)$ and $\ti r$ for the $j$-th leaf of $C_a(t)$.

We have that 
$$k_g(S(\ell))=a^{R_\ell}\cdot m_\ell\cdot n_\ell^{-1}\cdot a^{-R_\ell}$$ as previously explained. 
We put $$m_i:=m_\ell,n_i:=n_\ell,R_i:=R_\ell$$ to express that we consider the $i$-th leaves of the tree $C_a(s)\circ u$.
Consider now $\ti\ell$ the $i$-th leaf of $C_a(t)\circ u$.
Observe that $$S(\ti\ell)=h\cdot S(\ell).$$
Let us compute $k_{hgh^{-1}}(S(\ti\ell))$ in terms of $k_g(S(\ell))$ and $h$.
Using similar notations than above we have that 
$$k_{hgh^{-1}}(S(\ti\ell))=a^{\ti R_i}\cdot \ti m_i\cdot \ti n_i^{-1}a^{-\ti R_i}.$$

{\bf Case 1.}
Assume that the mlg of the $i$-th leaf $\ell$ of $C_a(s)\circ u$ is strictly contained inside $u$. By this we mean that this mlg does not pass through a leaf of $C_a(s)$.
In that case, $m_i$ only depends on $u$ and thus $m_i=\ti m_i$.
Similarly, $n_i=\ti n_i$ since $C_a(s)\circ u$ and $C_a(s)\circ v$ have same shape.
We deduce that $$k_{hgh^{-1}}(h\cdot S(\ell)) = a^{\ti R_i - R_i}\cdot k_g(S(\ell)) \cdot a^{-\ti R_i + R_i}.$$
Note that $R_i=d_s(j,root) + Q$ where $d_s(j,root)$ is the distance between the $j$-th leaf of $s$ and its root and where $Q$ is the distance between the origin of the mlg of $\ell$ and the root $r$ of $u$.
We have that $\ti R_i = d_t(j,root) + Q$.
Therefore, $R_i-\ti R_i = d_s(j,root) - d_t(j,root).$
We now analyse this quantity using the slope of $h$ at the point $S(\ell)$.
Indeed, by definition of the action of $F$ on the real unit interval $[0,1]$ we have that $h$ sends the interval $I_{s,j}$ to $I_{t,j}$ in the unique order-preserving affine way where $I_{s,j}$ stands for the standard dyadic interval associated to the $j$-th leaf of the tree $s$.
The Lebesgue measure of $I_{s,j}$ is equal to $2^{-d_s(j,root))}$.
Therefore, the slope of $h$ at the point $S(\ell)$ is equal to 
$$h'(S(\ell))=2^{d_s(j,root)-d_t(j,root)}=2^{R_i-\ti R_i}.$$
We deduce that 
$$k_{hgh^{-1}}(h\cdot S(\ell)) = a^{-\log_2(h'(S(\ell)))} \cdot k_g(S(\ell)) \cdot a^{\log_2(h'(S(\ell)))}.$$

{\bf Case 2.}
Assume now that the mlg of the $i$-th leaf $\ell$ of $C_a(s)\circ u$ is not strictly contained inside $u$.
This path goes through a root $r$ of the forest $u$ and stops at a vertex $\nu$ that is either equal to $r$ or is an internal vertex of the tree $t$.
As above, we say that $r$ is the $j$-th leaf of $s$ (identified with the $j$-th root of $u$).
Note that $S(\ell)=S(r)$.
Moreover, $R_i= d_{s}(\nu,root)$ and $\ti R_i = d_t(\ti\nu,root)$ using obvious notations.
Let us compute the word $\ti m_i$.
Since $C_a(s)$ only contains $a$-coloured vertices we deduce that 
$$m_i = a^{d_s(j,\nu)}\cdot m_i(u)$$ where 
$m_i(u)$ is the word obtained from reading the colours of vertices in the mlg of the $i$-th leaf of $u$ inside $u$.
Similarly, $$n_i=a^{d_s(j,\nu)} \cdot m_i(v) , \ \ti m_i=a^{d_t(j,\ti \nu)}\cdot m_i(u) , \text{ and } \ti n_i = a^{d_t(j,\ti \nu)} \cdot m_i(v).$$
All together we deduce that 
\begin{align*}
k_{hgh^{-1}} (S(\ti\ell)) &= a^{\ti R_i}\cdot \ti m_i\cdot \ti n_i^{-1}\cdot a^{-\ti R_i }\\
& = a^{d_t(\ti\nu,root)} \cdot \ti m_i\cdot \ti n_i^{-1} \cdot a^{-d_t(\ti\nu,root)}\\
& = a^{d_t(\ti\nu,root)} \cdot a^{d_t(j,\ti \nu)}\cdot m_i(u) \cdot m_i(v)^{-1}\cdot a^{-d_t(j,\ti \nu)}\cdot a^{-d_t(\ti\nu,root)}\\
& = a^{d_t(j,root)} \cdot m_i(u) \cdot m_i(v)^{-1} \cdot \cdot a^{-d_t(j,root)}\\
& = a^{d_t(j,root)} \cdot a^{-d_s(j,\nu)} \cdot m_i\cdot n_i^{-1} \cdot a^{d_s(j,\nu)}\cdot a^{-d_t(j,root)}\\
& = a^{d_t(j,root)} \cdot a^{-d_s(j,\nu)} \cdot a^{-d_s(\nu,root)} \cdot k_g(S(\ell)) \cdot a^{d_s(\nu,root)} \cdot a^{d_s(j,\nu)}\cdot a^{-d_t(j,root)}\\
& = a^{d_t(j,root)-d_s(j,root)} \cdot k_g(S(\ell)) \cdot a^{d_s(j,root)-d_t(j,root)}\\
& = a^{-\log_2(h'(S(\ell)))}\cdot k_g(S(\ell)) \cdot a^{\log_2(h'(S(\ell)))}.
\end{align*}

In conclusion, we have that $$\supp(k_{hgh^{-1}})=h\cdot \supp(k_g)=\{S(\ti\ell):\ \ti\ell \text{ a leaf of } C_a(t)\circ u\}.$$
If $q_i$ is the dyadic rational corresponding to the $i$-th leaf of $C_a(t)\circ u$, then we have the formula:
$$k_{hgh^{-1}}(q_i) = a^{-\log_2(h'(h^{-1}q_i))}\cdot k_g(h^{-1}q_i)\cdot a^{\log_2(h'(h^{-1}q_i))}.$$

This proves the theorem in the $F$-case.
We leave the $T,V$, and $BV$ cases to the readers. 
The $T$ and $V$-cases are rather identical. The only difference being that given a fixed $i$-th leaf of $C_a(s)\circ u$ we consider the $\sigma(i)$-th leaf (rather than the $i$-th leaf) of $C_a(t)\circ u$ where $\sigma$ is a (possibly nontrivial) permutation.
The remaining $BV$-case follows easily from the $V$-case using that the action $BV\act \Q_2$ is the composition of the quotient map $BV\onto V$ and the action $V\act \Q_2.$
\end{proof}

\begin{definition}\label{def:untwisted}
Consider a Ore HP monoid $M=\Mon\la \sigma|\rho\ra$, its fraction group $\Frac(M)$, and its associated FS groups $G,G^T,G^V,G^{BV}$.
If there exists a generator $a\in \sigma$ of $M$ that is central in the group $\Frac(M)$, then we say that we are in the untwisted case and say that the FS groups $G^X$ are untwisted for $X=F,T,V,BV.$
This is equivalent to say that the wreath products $K_M\wr_{\Q_2,\tau}X$ constructed in the proof of Theorem \ref{theo:FCWP} has a trivial twist.
\end{definition}

\begin{remark}\label{rem:twist}
In \cite{Brothier22,Brothier19WP} we have considered pairs $(\Gamma,\alpha)$ where $\Gamma$ is a group and $\alpha$ is an automorphism of $\Gamma$.
From $(\Gamma,\alpha)$ we have built a category $\mathcal C(\Gamma,\alpha)$ consisting of monochromatic binary forests with leaves decorated by elements of $\Gamma$. 
The composition of forests with tuples of elements of $\Gamma$ is described by the following single formula: 
$$g\circ Y= Y\circ (\alpha(g),e)$$ 
where $g\in \Gamma$ and where $Y$ is the (unique) monochromatic tree with two leaves.
This latter category has a calculus of fractions and provides the fraction groups $H,H^T,H^V,H^{BV}$.
We have shown in \cite{Brothier22} that $H^V$ is isomorphic to the twisted (permutational restricted) wreath product 
$\oplus_{\Q_2}\Ga\rtimes V$
with the twist expressed by the formula:
$$(hkh^{-1})(q) = \alpha^{\log_2(h'(h^{-1}q)}(k(h^{-1} q)) ,\ h\in V, k\in\oplus_{\Q_2}\Gamma, q\in \Q_2.$$
We get identical descriptions for the $F$ and $T$-cases by restrictions. 
For the $BV$-case we change the formula in the obvious way using the quotient map $BV\onto V$.

We note that if $(\Gamma,\alpha)=(K_M,\ad(a^{-1}))$, then the group $G^X$ of this present article is isomorphic to the groups $H^X$ for $X=F,T,V,BV$.
Using our previous results from \cite{Brothier22,Brothier19WP} we then immediately deduce the statements of this present article on the Haagerup property (for the $F,T,V$-cases) and on the classification of the groups $G^V$.
\end{remark}

\section{Consequences of the structural theorem of forest-skein groups}\label{sec:corollaries}

We have proven that $\Frac(\cF,1)$ is isomorphic to the twisted permutational restricted wreath product $K_M\wr_{\Q_2,\tau}F$ associated to $F\act \Q_2$ and $K_M$.
In particular, $\Frac(\cF,1)$ only depends on $K_M$ and on the twist $\tau$ (this latter being deduced from a section of $\Frac(M)\onto\Z$).
These groups previously appeared in three independent contexts.
Tanushevski constructed them using categories, so did Witzel and Zaremsky using pure cloning systems, and finally the author using (monoidal and covariant) functors from the free monochromatic FS category $\cUF\la x\ra$ into the category of groups \cite{Tanushevski16, Tanushevski17, Witzel-Zaremsky18, Brothier22}. 
The interested reader can consult the appendix of \cite{Brothier21} for a comparison of these three approaches.

\subsection{Classification up to isomorphisms}

We previously completely classified a large class of twisted wreath products of the form $\Ga\wr_{\Q_2}V$ but when the largest Thompson group $V$ is acting. 
This classification permits us to distinguish $V$-versions of FS groups using the kernel $K_M$.
Here we investigate the $F$ and $T$-case. We provide two independent and elementary proofs which extend the $V$-case result. Each of the two proofs is based on a trick that consists in finding a certain characteristic subgroup that clearly remembers the group $\Ga.$ 
The proofs work for all possible twists.
Note that we do not treat the $BV$-case as we explained in the introduction.

\begin{theorem}\label{theo:classifWP}
Consider $\Q_2=\Z[1/2]\cap [0,1)$ and let $X$ be any of the three Thompson groups $F,T,$ or $V$.
Consider the usual action $X\act \Q_2$ and for each group $\Ga$ the associated (possibly twisted) wreath product $\Ga\wr_{\Q_2}X:=\oplus_{\Q_2}\Ga\rtimes X$.

If $\Ga,\ti\Ga$ are groups satisfying that $\Ga\wr_{\Q_2}X$ and $\ti\Ga\wr_{\Q_2}X$ are isomorphic, then $\Ga$ and $\ti\Ga$ are isomorphic.
\end{theorem}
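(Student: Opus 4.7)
The plan is to locate a characteristic subgroup of $G := \Gamma \wr_{\Q_2} X$ whose isomorphism type determines $\Gamma$, so that any abstract isomorphism $G \cong \tilde G$ must identify $\Gamma$ with $\tilde\Gamma$. This reduces to two subproblems: (a) identifying the base $N := \oplus_{\Q_2}\Gamma$ as a characteristic subgroup of $G$, and (b) extracting $\Gamma$ from the pair $(N, X \curvearrowright N)$, where the action is by conjugation coming from $G/N$.

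For (a) with $X = T$ or $X = V$ (both non-abelian simple), I would characterize $N$ as the unique normal subgroup $M \lhd G$ such that $G/M$ is non-abelian simple. If $N \subseteq M$, then $G/M$ is a quotient of $G/N = X$, and simplicity of $X$ forces $M = N$. If $N \not\subseteq M$, then $MN/N$ is a nontrivial normal subgroup of $X$, so $MN = G$, and hence $G/M \cong N/(M \cap N)$. This quotient inherits from $N = \oplus_{\Q_2}\Gamma$ a family of $X$-conjugate, pairwise commuting, nontrivial subgroups $\{\overline{\Gamma_q}\}_{q \in \Q_2}$ that together generate it. In a non-abelian simple group, such a configuration is impossible: each $\overline{\Gamma_q}$ is normal (being centralized by every $\overline{\Gamma_{q'}}$ with $q' \ne q$ and by itself), so simplicity forces either all $\overline{\Gamma_q}$ trivial (whence $M \cap N = N$, a contradiction) or the quotient abelian (excluded). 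This is the first trick.

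For (a) with $X = F$, the above fails directly since $F$ is not simple, but $[F, F]$ is. Here I would consider the characteristic subgroup $H := [G, G] \cdot N \lhd G$, equal to the kernel of the canonical composite $G \twoheadrightarrow G^{\mathrm{ab}} \twoheadrightarrow F^{\mathrm{ab}} = \Z^2$. One has $H/N \cong [F, F]$ simple non-abelian, and $H$ still contains $N$ with its commuting-summand structure. Applying the previous argument inside $H$ (using that $[F, F]$ still acts transitively on $\Q_2 \cap (0,1)$ and handling the fixed coordinate $0$ separately) identifies $N$ as the unique normal subgroup of $H$ with non-abelian simple quotient; since $H$ is characteristic in $G$, so is $N$. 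This is the second trick.

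For (b), which I expect to be the main obstacle, once $N$ is characteristic the conjugation of $G$ on $N$ descends to a canonical action of $X = G/N$ on $N$. I would recover $\Gamma$ as the centralizer $C_N(\mathrm{Stab}_X(q_0))$ for any point $q_0 \in \Q_2$. The crucial fact is that for each $X = F, T, V$ the orbits of $\mathrm{Stab}_X(q_0)$ on $\Q_2 \setminus \{q_0\}$ are infinite, which combined with the finite-support condition on $N$ collapses the centralizer to the single coordinate $\Gamma_{q_0} \cong \Gamma$. The subtlety is that $\mathrm{Stab}_X(q_0)$ must be pinned down up to $X$-conjugacy purely from the abstract group $X$ (e.g.\ through intrinsic maximality or self-normalizer criteria); since the point-stabilizers form a single conjugacy class, the isomorphism type of the centralizer is then a well-defined invariant of $G$, yielding $\Gamma \cong \tilde\Gamma$.
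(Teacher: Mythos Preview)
Your part (a) for $X=T,V$ is correct and pleasantly different from the paper: characterising $N=\oplus_{\Q_2}\Ga$ as the unique normal subgroup with non-abelian simple quotient works, and is shorter than the paper's ``decomposability property'' trick for $T$ (and than the cited earlier work for $V$).

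There are two genuine gaps, however.

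\textbf{Part (a) for $X=F$.} You assert that $H:=[G,G]\cdot N$ is characteristic because it is ``the kernel of the canonical composite $G\twoheadrightarrow G^{\mathrm{ab}}\twoheadrightarrow F^{\mathrm{ab}}$.'' But that second arrow is not canonical: it requires already knowing $N$. Concretely, in the untwisted case with $\Ga=\Z$ one computes $G^{\mathrm{ab}}\cong\Z^4$, and there is no preferred $\Z^2$ quotient. Even granting $H$ characteristic, your uniqueness argument inside $H$ breaks at the fixed coordinate $0$: if all $\overline{\Ga_q}$ for $q>0$ are trivial, the quotient $H/M$ is a quotient of $\Ga_0\cong\Ga$, which may well be non-abelian simple. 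The paper sidesteps all of this with a completely different trick: since $F$ fixes $0\in\Q_2$, one gets $G=\Ga_0\times(\Ga\wr_U F)$, and $\Ga_0$ is shown to be characteristic directly as the first term of a \emph{maximal decomposition pair}. This extracts $\Ga$ without ever needing $N$ characteristic or any version of step (b).

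\textbf{Part (b).} The sentence ``the conjugation of $G$ on $N$ descends to a canonical action of $X=G/N$ on $N$'' is false when $\Ga$ (hence $N$) is non-abelian: one only gets a map $G/N\to\mathrm{Out}(N)$. If instead you take the full preimage $P\leq G$ of $\mathrm{Stab}_X(q_0)$ and compute $C_N(P)$, then $N\subseteq P$ forces $C_N(P)\subseteq Z(N)$, so you recover at best $Z(\Ga)$, not $\Ga$. If you use a section $s:X\hookrightarrow G$ and compute $C_N(s(\mathrm{Stab}_X(q_0)))$, you do get $\Ga_{q_0}$, but the section is not canonical and you have not shown the answer is section-independent. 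The paper, for $X=T$, handles exactly this issue by invoking Brin's description of $\Aut(T)$ (via Rubin's theorem): every automorphism of $T$ is spatially implemented, which forces the restriction $\theta|_N$ to respect supports and hence to be ``diagonal,'' yielding coordinate-wise isomorphisms $\Ga\cong\ti\Ga$. Your hoped-for intrinsic characterisation of point-stabilisers would need to accomplish the same thing, and you have not supplied it.
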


\begin{proof}
Fix a group $\Ga.$
The case $X=V$ has already been proven in \cite{Brothier22}.
We are going to prove the theorem for $X=F$ and $X=T$ separately. 

We start by considering $X=F$.
Observe that $F\act \Q_2$ has two orbits: the singleton $\{0\}$ and its complement $U:=\Q_2\setminus\{0\}=\Z[1/2]\cap (0,1).$
We deduce that the group $\Ga\wr_{\Q_2}F$ decomposes into a direct product $\Ga_0 \times (\Ga\wr_U F)$ where $\Ga_0$ denotes the copy of $\Ga$ associated to the point $0\in\Q_2.$
We are going to show that $\Ga_0$ is a characteristic subgroup of $\Ga\wr_{\Q_2}F$, i.e.~every automorphism of $\Ga\wr_{\Q_2}F$ restricts into an automorphism of $\Ga_0$. 
This will imply the proposition.

For a group $G$ we say that $(A,B)$ is a \textit{decomposition pair} if $G=A\times B$.
We say that $(A,B)$ is a \textit{maximal decomposition pair} if given any decomposition pair $(A',B')$ we have that $A'$ or $B'$ is a subgroup of $A$. (Note that we do care about the order of $A$ and $B$ in this definition.)

{\bf Claim:} If $(A,B)$ is a maximal decomposition pair of $G$, then $A$ and $B$ are characteristic subgroups of $G$.

Indeed, consider a maximal decomposition pair $(A,B)$ and an automorphism $\theta\in\Aut(G).$
We have that $(\theta(A),\theta(B))$ is a decomposition pair of $G$ and likewise $(\theta^{-1}(A),\theta^{-1}(B))$ is a decomposition pair.
We then have four cases being all combinations of $\theta(Y),\theta^{-1}(Z)\subset A$ with $Y,Z\in\{A,B\}$.
Assume $Z=A$. We deduce that $\theta(Y)\subset A\subset \theta(A)$.
If $Y=A$, then $\theta(A)=A$ and we are done.
If $Y=B$, then $\theta(B)\subset\theta(A)$ implying that $B$ is trivial (since $A\cap B=\{e\}$) and thus $A=G$ and once again $A,B$ are characteristic subgroups (for trivial reasons).
The remaining cases with $Z=B$ can be treated similarly.

Set $G=\Ga\wr_{\Q_2}F$ and consider the pair $(\Ga_0,\Ga\wr_{U}F)$.
It is now sufficient to show that $(\Ga_0,\Ga\wr_{U}F)$ is a maximal decomposition pair of $G$.
We already observed that it is a decomposition pair so it remains to check it is maximal.
Consider another decomposition pair $(A,B)$ and the quotient map $q:\Ga\wr_{\Q_2}F\to F.$
We have that $q(A),q(B)$ are two subgroups of $F$ which generates $F$ and that mutually commute, i.e.~$q(a)q(b)=q(b)q(a)$ for all $a\in A, b\in B$.
Note that $q(A)\cap q(B)$ is a subgroup of $Z(F)$ the center of $F$. 
It is well-known that $Z(F)$ is trivial implying that $q(A)\cap q(B)$ is trivial.
We deduce that $(q(A),q(B))$ is a decomposition pair of $F$. 
However, it is easy to prove (and also well-known) that $F$ does not admit any non-trivial decomposition into a direct product of groups. 
We deduce that $q(A)$ or $q(B)$ is trivial. 
Up to swapping $A$ with $B$ we obtain that $q(A)$ is trivial and $q(B)=F$.

By definition of $q$ we deduce that $A\subset \oplus_{\Q_2}\Ga$. Moreover, note that for any $v\in F$ there exists $h_v\in \oplus_{\Q_2}\Ga$ satisfying $h_v v\in B$.
Consider $g\in A$ and its support $\supp(g)$ as a map from $\Q_2$ to $\Ga.$
Since $g$ commutes with all elements of $B$ we have that 
$$\supp(g) = \supp(\ad(h_v v)(g)):=\supp( h_v v g v^{-1} h_v^{-1}) = v\cdot \supp(g).$$
Therefore, $\supp(g)$ is stabilised by any element of $F$. 
Since $\supp(g)$ is necessarily finite and $\{0\}$ is the only finite $F$-orbit of $\Q_2$ we deduce that $\supp(g)\subset \{0\}$ for all $g\in A$.
This is equivalent to have $A\subset \Ga_0$. 
Therefore, $(\Ga_0,\Ga\wr_{U}F)$ is a maximal decomposition pair of $G$ and $\Ga_0$ is a characteristic subgroup of $G$.
Since $\Ga_0\simeq \Ga$ we easily deduce the statement of the proposition.

Consider now the group $X=T$ and the wreath product $G=K\rtimes T=\oplus_{\Q_2}\Ga\rtimes X$.
Let us prove that $K$ is a characteristic subgroup of $G$.
For any subgroup $H$ of $G$ we write $NC(H)$ for the \textit{normal closure} of $H$ inside $G$, i.e.~the smallest normal subgroup of $G$ containing $H$.
Say that $H\subset G$ has the decomposability property if there exists subgroups $A,B\subset G$ satisfying that 
$$H=A\times B \text{ and } NC(A)=NC(B)=H.$$
It is easy to see that $K\subset G$ has the decomposability property.
Let us show that any normal subgroup $H\lhd G$ with the decomposability property is contained in $K$. 
This will imply that $K$ is a characteristic subgroup (as being the largest normal subgroup of $G$ satisfying the decomposability property).
Assume $H=A\times B$, is a normal subgroup of $G$ and $NC(A)=NC(B)=H$.
Moreover, assume that $H\not\subset K$.
If $A\subset K$, then $NC(A)\subset K$ since $K$ is a normal subgroup implying $H\subset K$ and thus contradicting our assumption.
Therefore, $A,B$ are both not contained in $K$.
Consider the quotient map $q:G\to T$ and note that $q(H)\lhd T$ is a non-trivial normal subgroup since $H\not\subset K$ and $H\lhd G$ is normal.
Since $T$ is simple we deduce that $q(H)=T.$
Moreover, $q(A),q(B)$ are two subgroups of $T$ that are non-trivial, mutually commute, and generate $T$.
If $q(A)\cap q(B)$ is trivial, then $T=q(A)\times q(B)$ which is absurd since $T$ is simple.
Therefore, $q(A)\cap q(B)$ is non-trivial and moreover it is in the center of $T$ which again is absurd.
We have reached a contradiction and thus $H\subset K$ implying that $K$ is a characteristic subgroup.

Consider now an isomorphism $\theta:G\to \ti G$ where $G=\Ga\wr_{\Q_2} T, \ti G=\ti\Ga\wr_{\Q_2} T$ and $\Ga,\ti\Ga$ are (possibly twisted) wreath products.
From above, $\theta$ restricts into an isomorphism $\kappa:K\to\ti K$ where $K=\oplus_{\Q_2}\Ga, \ti K=\oplus_{\Q_2} \ti \Ga.$
We deduce that 
$$\theta(k v) = \kappa(k) \cdot c_v \cdot \phi_v \text{ for all } k\in K, v\in T$$
where $T\ni v\mapsto c_v\in K$ is a cocycle and $v\mapsto \phi_v$ an automorphism of $T$.
Using a theorem of Rubin, Brin showed that $\Aut(T)$ was isomorphic (in the obvious way) with the normaliser of $T$ inside the homeomorphism group of the unit circle $S_1$ \cite{Rubin89, Brin96}.
In fact, Brin proved the striking result that $\Aut(T)\simeq T\rtimes \Z/2\Z$ where $T$ is identified with its group of inner automorphisms and  where $\Z/2\Z$ is generated by an involution of the unit circle $S_1$ that necessarily normalises $T$.
In particular, there exists a homeomorphism $\varphi$ of $S_1$ that implements the automorphism $\phi$:
$$\phi_v=\varphi v\varphi^{-1} \text{ for all } v\in T.$$
From there it is not hard to conclude that necessarily $\varphi(\Q_2)=\Q_2$ and 
$$\supp(\theta(k))=\varphi(\supp(k)) \text{ for all } k\in K.$$
Therefore, by composing $\theta|_K$ with coordinate projections we obtain that for any $x\in\Q_2$ there exists $\kappa_x:\Ga\to \ti\Ga$ satisfying 
$$\kappa(k)(\varphi(x)) = \kappa_x(k(x)) \text{ for all } k\in K.$$
Since $\theta|_K$ is an isomorphism we necessarily have that $\kappa_x$ is an isomorphism for all $x\in \Q_2$ and in particular $\Ga$ and $\ti\Ga$ are isomorphic.
\end{proof}

From Theorems \ref{theo:FCWP} and  \ref{theo:classifWP} we deduce Corollary \ref{cor:B}. 
This permits to construct a large family of pairwise non-isomorphic FS groups using Ore HP monoids.

\begin{proof}[Proof of Corollary \ref{cor:B}]
Consider two FS groups $G,\ti G$ built from two Ore HP monoids $M,\ti M$, respectively.
If $G\simeq \ti G$, then $K_M\simeq K_{\ti M}$, by Theorems \ref{theo:FCWP} and \ref{theo:classifWP}.
The same holds for the $T$ and $V$-cases.

Assume now that we are in the untwisted case. Assume that $K_M\simeq K_{\ti M}$.
Then of course the {\it untwisted} wreath products $K_M\wr_{\Q_2} F$ and $K_{\ti M}\wr_{\Q_2}F$ are isomorphic.
Since these latter groups are isomorphic to $G$ and $\ti G$, respectively, we deduce that $G\simeq \ti G$.
A similar arguments provide the other cases.
\end{proof}

\subsection{Haagerup property}\label{sec:Haagerup}
A discrete group $G$ has the Haagerup property if there exists a net (a sequence when the group is countable) of positive definite maps $f_i:G\to\C$ that vanishes at infinity and converges pointwise to $1$. It is a weakening of amenabiity (take the same definition but replace ``vanishes at infinity'' by ``finitely supported''). 
It is equivalent to Gromov's a-T-menability: there exists a proper affine isometric action on a Hilbert space. 
Moreover, a group with this property satisfies the Baum-Connes conjecture (with coefficients) by a deep result of Higson and Kasparov \cite{Higson-Kasparov01}.
We refer the reader to the following book \cite{CCJJV01} and the recent survey of Valette for more details \cite{Valette18}.

Using the novel Jones' technology for constructing unitary representations the author proved that if $\Ga$ is any discrete group with the Haagerup property, then a twisted wreath product $\Ga\wr_{\Q_2}V$ has the Haagerup property (as a discrete group) for a large class of twists \cite{Brothier19WP}.
This generalises the previous result of Farley showing that $V$ has the Haagerup property \cite{Farley03} and extends a previous proof of Jones and the author \cite{Brothier-Jones19}. 
Moreover, this is the only analytic (not geometric) proof of such a statement for wreath products and the Haagerup property. 
A related statement using actions on walls is due to Cornulier, Stalder, and Valette where they obtain rather different examples \cite{Cornulier-Stalder-Valette12}.

\begin{corollary}
Let $M$ be a Ore HP monoid with associated FS category $\cF$ and FS groups $G^X$ with $X=F,T,V$.
We have that $G^X$ has the Haagerup property if and only if $\Frac(M)$ does.
\end{corollary}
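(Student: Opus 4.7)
The plan is to pass through the wreath product decomposition from Theorem \ref{theo:FCWP} and to reduce the statement to two classical closure properties of the Haagerup property, combined with the main result of \cite{Brothier19WP}.

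For the forward implication, suppose that $G^X$ has the Haagerup property. Via Theorem \ref{theo:FCWP} we identify $G^X\simeq K_M\wr_{\Q_2,\tau}X=\oplus_{\Q_2}K_M\rtimes X$, so $K_M$ embeds in $G^X$ as any single coordinate of the base group $\oplus_{\Q_2}K_M$. Since the Haagerup property is hereditary, $K_M$ inherits it. The splitting $\Frac(M)\simeq K_M\rtimes\Z$ obtained in the preceding section (where $\Z$ is the cyclic subgroup generated by the chosen letter $a\in\sigma$) then writes $\Frac(M)$ as an extension of the amenable group $\Z$ by $K_M$. A classical closure result asserts that if $N\lhd G$ is Haagerup with amenable quotient $G/N$, then $G$ is Haagerup (see \cite{CCJJV01}). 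Applying this to $1\to K_M\to\Frac(M)\to\Z\to 1$ gives the desired conclusion.

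For the converse, assume that $\Frac(M)$ has the Haagerup property; then so does its subgroup $K_M$. Theorem \ref{theo:FCWP} combined with Remark \ref{rem:twist} identifies $G^V$ with the twisted wreath product $H^V$ associated to the pair $(\Ga,\al)=(K_M,\ad(a^{-1}))$ treated in \cite{Brothier22,Brothier19WP}. The main theorem of \cite{Brothier19WP} asserts that such twisted wreath products over the action $V\act\Q_2$ retain the Haagerup property as soon as the decoration group does. This yields the $V$-case, and the $F$ and $T$-cases follow at once from the chain of subgroup inclusions $G^F\hookrightarrow G^T\hookrightarrow G^V$ together with heredity.

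The only genuinely non-trivial ingredient is the preservation result of \cite{Brothier19WP}, proved analytically using the Jones technology of \cite{Brothier-Jones19} to construct positive definite maps with suitable decay. The identification of the cocycle $\tau(v,q)=\ad(a^{-1})^{\log_2(v'(v^{-1}q))}$ produced by Theorem \ref{theo:FCWP} with the twist considered in \cite{Brothier19WP}, spelled out in Remark \ref{rem:twist}, is precisely what allows us to invoke that result as a black box. With this in hand the corollary is a two-line argument in each direction; I do not expect any further obstacle.
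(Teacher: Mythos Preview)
Your proof is correct and follows essentially the same route as the paper: reduce to the $V$-case via Remark \ref{rem:twist} and invoke \cite{Brothier19WP} for the equivalence between the Haagerup property of $K_M$ and that of the twisted wreath product, pass between $K_M$ and $\Frac(M)$ using the split extension by $\Z$, and handle $F,T$ by the inclusions $K_M\subset G\subset G^T\subset G^V$ together with heredity. The only cosmetic difference is that the paper cites the full ``if and only if'' from \cite{Brothier19WP} for both directions at once, whereas you handle the forward implication by embedding $K_M$ directly into $G^X$; the content is the same.
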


\begin{proof}
Consider $M, G^V$ as above.
By Remark \ref{rem:twist} we have that $G^V\simeq H^V$ where $H^V$ is the group constructed from the pair $(K_M,\ad(a^{-1}))$ appearing in \cite{Brothier19WP}.
This latter group $H^V$ has been proven to have the Haagerup property if and only if $K_M$ has the Haagerup property in \cite{Brothier19WP}.
Observe that $\Frac(M)$ is an extension of $K_M$ by an abelian group. Hence, $\Frac(M)$ has the Haagerup property if and only if $K_M$ has this property.
We deduce that $G^V$ has the Haagerup property if and only if $K_M$ has this property.
The $F$ and $T$-case follow by noting that we have the tower of inclusions $$K_M\subset G\subset G^T\subset G^V$$ and by using that the Haagerup property is closed under taking subgroups.
\end{proof}

\subsection{Homological and topological finiteness properties}\label{sec:finiteness}

{\bf Definitions.}
Recall that a group $G$ is of type $FP_n$ for a given $n\geq 1$ if there exists a projective resolution of the $\Z[G]$-module $\Z$ (where $G$ acts trivially on it) such that the $n$ first modules are finitely generated. It is of type $FP_\infty$ if it is of type $FP_n$ for any $n\geq 1$.
This is a homological finiteness property.

We say that $G$ is of type $F_n$ if there exists a classifying space whose $n$-skeleton is finite. Similarly, $G$ is of type $F_\infty$ if it is of type $F_n$ for any $n\geq 1$.
This is a topological finiteness property.

By observing that the chain complex of a universal cover of a classifying space provides a free resolution of the $\Z[G]$-module $\Z$ we deduce that $F_n$ implies $FP_n$ for any $n\geq 1$.
Moreover, note that $F_1$ is equivalent to $FP_1$ which is itself equivalent to be finitely generated.
Condition $F_2$ is equivalent to be finitely presented. 
Now, if $G$ is finitely presented, then $FP_n$ is equivalent to $F_n$ for all $n$.
However, there exist groups of type $FP_2$ that are not finitely presented.
We refer the reader to the two excellent books of Brown and Geoghegan for details \cite{Brown82,Geoghegan-book}.

{\bf Spine and a previous theorem.}
It is a difficult task to decide if a group satisfies any of these properties. 
However, we proved that a large family of FS groups are of type $F_\infty$ \cite{Brothier22-FS}.
This is done using a theorem due to Thumann by considering the {\it spine} of the FS category \cite{Thumann17}.
If $\cF$ is a forest category with colour set $S$, then define $\Sp(\cF)_1$ to be the set of carets $\{Y_a:\ a\in S\}.$
Define $\Sp(\cF)_2$ to be the union $\cup_{a,b} \mcm(Y_a,Y_b)$ of the {\it minimal common right-multiples} of $Y_a$ and $Y_b$ so that $a\neq b$ are colours. 
Inductively construct $\Sp(\cF)_n$ by taking the union of the $\mcm(x,y)$ with distinct $x,y\in \Sp(\cF)_{n-1}.$

The {\it spine} of $\cF$ is
$$\Sp(\cF):=\bigcup_{n\geq 1} \Sp(\cF)_n.$$
If the spine of a Ore forest category is finite, then the associated FS groups $G^X$ with $X=F,T,V,BV$ are all of type $F_\infty$.
For instance, any FS presentation of the form $\la a,b| C_a(t)=C_b(s)\ra$ with $t,s$ monochromatic trees with same number of leaves provides a Ore FS category whose fraction group is of type $F_\infty$.
Although, in general we do not know which finiteness properties satisfy a generic FS group.

{\bf Specialisation of a previous result.}
We can define the spine for a HP monoid $M$ as well. Indeed, in the definition of above replace $\Sp(\cF)_1$ by the set of letters and then proceed as before for defining $\Sp(\cF)_n$.
In the specific situation of $\cF=\cF_M$ being a FS category constructed from $M$ we have that $\Sp(\cF)$ is in bijection with $\Sp(M)$. This permits to easily check if the spine of $\cF_M$ is finite or not.
Hence, we have the following corollary of our previous theorem on finiteness property and our Theorem \ref{theo:FCWP}.

\begin{corollary}
If $M$ is a Ore HP monoid with finite spine, then $G^X$ is of type $F_\infty$ for $X=F,T,V,BV$.
\end{corollary}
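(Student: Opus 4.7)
My plan is to derive the corollary as an essentially formal consequence of the earlier $F_\infty$-theorem of \cite{Brothier22-FS} (built on Thumann's criterion \cite{Thumann17}), which asserts that a Ore FS category with finite spine produces FS groups of type $F_\infty$ in all four flavours $F,T,V,BV$. Thus it suffices to prove that the spine $\Sp(\cF_M)$ of the FS category attached to $M$ is finite whenever the spine $\Sp(M)$ of the monoid is finite; indeed I would show something stronger, namely that there is a natural bijection $\Sp(\cF_M)\simeq \Sp(M)$.

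The key structural observation is that under the construction $u\mapsto \ti u$ sending a word to the associated left-vine, the monoid $M$ embeds into the tree-monoid of $\cF_M$, and every caret $Y_a$ of $\cF_M$ is precisely the left-vine $\ti a$ of the corresponding letter $a\in\sigma$. I would then argue that this correspondence is compatible with minimal common right-multiples: first, if $u,v\in M$ are words, then any common right-multiple $uu'=vv'$ in $M$ gives rise to the common right-multiple $\ti u\circ \ti{u'}=\ti v\circ \ti{v'}$ of vines in $\cF_M$; conversely, using the fact that the skein presentation of $\cF_M$ consists only of vine-relations (and the colouring generating property established earlier for $\cF_M$), any common right-multiple of two left-vines can be taken to be itself a left-vine, and hence comes from a common right-multiple in $M$. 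Minimality on one side translates to minimality on the other by cancellation, yielding a bijection $\mcm(\ti u,\ti v)\simeq \mcm(u,v)$. Iterating this through the inductive definition $\Sp(\cF_M)_n=\bigcup \mcm(x,y)$ for distinct $x,y\in\Sp(\cF_M)_{n-1}$, and starting from the matching base cases $\Sp(\cF_M)_1=\{Y_a:a\in\sigma\}\leftrightarrow \sigma=\Sp(M)_1$, one obtains $\Sp(\cF_M)_n\simeq\Sp(M)_n$ for every $n$, hence the bijection of spines in the limit.

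Once finiteness of $\Sp(\cF_M)$ is established, the earlier theorem directly gives $G^X$ is of type $F_\infty$ for $X=F,T,V,BV$, finishing the proof. Theorem \ref{theo:FCWP} is not strictly needed for $X\in\{F,T,V,BV\}$ here, but it is invoked implicitly to identify $G^X$ with $\Frac(\cF_M,1)^X$ and to make sense of the four flavours uniformly; in particular for the $BV$-case where the wreath product description ensures consistency between the monoid and diagrammatic descriptions.

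The main obstacle I expect is the second half of the mcm-comparison: showing that any minimal common right-multiple of two left-vines in $\cF_M$ is again (equivalent to) a left-vine, so that it actually corresponds to an element of $\Sp(M)$. This uses that the skein relations are vine-to-vine (by construction of $\cF_M$) together with left-cancellativity of $\cF_M$, which was established in the framework section under the Ore hypothesis. Once this geometric reduction is in place, matching minimality on the two sides is a routine cancellation argument, and the induction on spine levels proceeds without further difficulty.
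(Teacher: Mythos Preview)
Your approach matches the paper's: the corollary is not given a detailed proof there, but is deduced from the assertion (in the paragraph immediately preceding it) that $\Sp(\cF_M)$ is in bijection with $\Sp(M)$, together with the $F_\infty$-theorem of \cite{Brothier22-FS} built on Thumann's criterion. Your sketch of that bijection is correct in outline; two minor corrections: write $\widetilde{uu'}=\tilde u\circ(\tilde{u'}\otimes I^{\otimes|u|})$ rather than $\tilde u\circ\tilde{u'}$ (the latter is not a composable pair in $\cF_M$), and Theorem~\ref{theo:FCWP} plays no logical role here---the $BV$-case is covered directly by the spine criterion applied to $\cF_M^{BV}$, not via any wreath-product identification.
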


This corollary is obtained using a difficult topological proof based on the structure of a classifying space of the FS group $G^X$. 
It does have the advantage to work equally well in the twisted case (and thus proves that certain twisted wreath products are of type $F_\infty$ by Theorem \ref{theo:FCWP}).
However, it does not cover a number of cases of groups for which we believe are of type $F_\infty$. Additionally, it does not permit to consider other properties like $F_n$ or $FP_n$ with finite $n$.
We will see that other powerful techniques are available for the class of {\it wreath products}.

{\bf Result of Bartholdi, Cornulier, and Kochloukova.}
In our specific situation of FS groups obtained from HP monoids we can completely determine these properties in terms of the monoid.
Indeed, Bartholdi, Cornulier, and Kochloukova have (almost) completely settle the question for {\it untwisted} wreath products with the following result.
Say that a group action $W\act Q$ is $(P_m)$ if $Q$ is a nonempty set and for all $1\leq j\leq m$ we have that the diagonal action $W\act Q^j$ has finitely many orbits and if the stabiliser of a point is a subgroup of type $FP_{m-j}$.
Say that it is $(P_\infty)$ if it is $(P_m)$ for every $m\geq 1$.

\begin{theorem}\label{theo:BCK}\cite{Bartholdi-Cornulier-Kochloukova15}
Consider a group $K$, a group action $W\act Q$, and a natural number $m\geq 1$.
If $K$ is of type $FP_m$, $W$ of type $FP_m$, and $W\act Q$ is $(P_m)$, then the \emph{untwisted} wreath product $K\wr_Q W$ is of type $FP_m$.

Moreover, when $K$ has infinite abelianisation, then the converse is also true.
\end{theorem}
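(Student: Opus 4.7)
The plan is to prove the homological statement via the Lyndon--Hochschild--Serre (LHS) spectral sequence associated to the split extension
$$1 \to N \to K\wr_Q W \to W \to 1, \quad N := \oplus_Q K,$$
by an induction on $m$. The base case $m=1$ is immediate since $K\wr_Q W$ is finitely generated as soon as $K,W$ are finitely generated and $W\act Q$ has finitely many orbits, which is part of the $(P_1)$ hypothesis.

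For the inductive step, I would first analyse $H_q(N;\Z)$ as a $\Z[W]$-module for $q\leq m$. Using that homology turns direct sums of groups into derived tensor products, an iterated K\"unneth argument identifies each $H_q(N;\Z)$ with a direct sum (indexed by unordered tuples in $Q$) of tensor products of the groups $H_{q_1}(K;\Z)\otimes\cdots\otimes H_{q_j}(K;\Z)$ with $q_1+\cdots+q_j=q$. The $W$-action permutes these tensor slots according to the diagonal action $W\act Q^j$, so the corresponding $\Z[W]$-module decomposes as a finite sum of induced modules $\textnormal{Ind}_{W_\alpha}^W(M_\alpha)$, one for each $W$-orbit on the unordered-tuple set (finitely many by the $(P_j)$ assumption with $j\leq m$), where $W_\alpha$ is the stabiliser of a representative tuple and $M_\alpha$ is a finitely generated $\Z[W_\alpha]$-module built from $H_*(K;\Z)$ in degrees strictly below $q$ (using that $K$ is $FP_{q-1}$ if $q\geq 1$, or $M_\alpha = \Z$ if $q=0$).

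The crucial input is then Shapiro's lemma: $\textnormal{Ind}_{W_\alpha}^W(M_\alpha)$ is of type $FP_{m-j}$ as a $\Z[W]$-module precisely when $M_\alpha$ is of type $FP_{m-j}$ as a $\Z[W_\alpha]$-module, and the latter follows from $W_\alpha$ being $FP_{m-j}$ (the $(P_m)$ hypothesis) together with the inductive hypothesis applied to smaller homological degrees of $K$. Plugging this back into the $E_2$-page
$$E^2_{p,q} = H_p\bigl(W;\, H_q(N;\Z)\bigr),$$
the standard fact that a group of type $FP_m$ acting on $\Z[W]$-modules of type $FP_{m-q}$ produces finitely generated homology in degrees $p\leq m-q$ yields that each $E^2_{p,q}$ with $p+q\leq m$ is finitely generated abelian, and hence $K\wr_Q W$ is $FP_m$ by the standard criterion.

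The main obstacle is the K\"unneth bookkeeping: controlling Tor-terms for an infinite direct sum of groups and keeping track of how $W$ permutes the K\"unneth summands so that the finite-orbit hypothesis $(P_m)$ can be applied uniformly. Once this combinatorial identification is in place, Shapiro's lemma and the spectral sequence do the rest mechanically. The partial converse under the hypothesis that $K$ has infinite abelianisation would follow by reading the spectral sequence in reverse: if $K\wr_Q W$ is $FP_m$ then the $E_\infty$-page is controlled, and the non-vanishing of $H_1(K;\Z)$ forces each stabiliser $W_\alpha$ and the group $W$ itself to satisfy the corresponding finiteness property.
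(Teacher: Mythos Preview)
The paper does not give its own proof of this statement: it is quoted verbatim as a result of Bartholdi--Cornulier--Kochloukova and then applied as a black box to the action $X\curvearrowright\Q_2$. So there is no in-paper argument to compare against. Your outline does match the architecture of the original BCK proof: analyse $H_q(N;\Z)$ for $N=\oplus_Q K$ via an iterated K\"unneth decomposition, identify the $W$-module structure with a finite sum of modules induced from stabilisers of tuples in $Q^j$, and feed Shapiro's lemma together with the $(P_m)$ hypothesis into the LHS spectral sequence.

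There is, however, a real gap in your last inference. You deduce that $E^2_{p,q}=H_p(W;H_q(N;\Z))$ is a finitely generated abelian group for $p+q\le m$ and then write ``hence $K\wr_Q W$ is $FP_m$ by the standard criterion.'' Finitely generated integral homology in degrees $\le m$ is \emph{not} a criterion for $FP_m$; there are groups with $H_i(-;\Z)$ finitely generated in every degree that are not even $FP_2$. What BCK actually establish, and what you need, is the module-level statement: if $H_q(N;\Z)$ is of type $FP_{m-q}$ as a $\Z[W]$-module for every $0\le q\le m$, then the extension is $FP_m$. Proving that implication requires running the spectral sequence with arbitrary coefficient modules (equivalently, testing against products $\prod_I\Z[G]$ via the Bieri--Brown criterion), not just with $\Z$. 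You have the correct hypothesis in hand after the Shapiro step, but you invoke the wrong conclusion from it.

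Your sketch of the converse is also too thin. The assumption that $K$ has infinite abelianisation is used by BCK to exhibit, for each $j\le m$, a nonzero summand of $H_j(N;\Z)$ of the form $\mathrm{Ind}_{W_\alpha}^W\Z$ (built from $H_1(K;\Z)^{\otimes j}$), so that failure of $FP_{m-j}$ for a stabiliser $W_\alpha$ propagates to failure of $FP_m$ for the wreath product. This is the delicate direction of the theorem, and ``reading the spectral sequence in reverse'' does not by itself explain where the infinite-abelianisation hypothesis enters or why it is needed.
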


They deduce the topological analogues: take the statement of above and replace $FP_m$ by $F_m$ except for the condition on the stabilisers.
It is remarkable to have an ``if and only if'' statement (under the mild condition of having infinite abelianisation of $K$). Unsurprisingly, the most difficult part of their result resides in proving the converse.

Now, if $M$ is a Ore HP monoid, then it produces four FS groups $G^X$ with $X=F,T,V,BV$ using the FS category $\cF_M$ associated to $M$.
We have proven that $G^X\simeq K_M\wr_{\Q_2} X$ (where the wreath product may be twisted) where $X\act \Q_2$ is the usual action on the dyadic rationals and where $K_M$ is the kernel of the length map associated to the fixed presentation of $M$.
Recall that we have a (split) short exact sequence 
$$1\to K_M\to \Frac(M)\to \Z\to 1.$$
Since $\Z$ is a group of type $F_\infty$ (it even has a finite classifying space with two cells) we have that $K_M$ and $\Frac(M)$ have same finiteness properties.
In order to apply Theorem \ref{theo:BCK} it is then sufficient to prove that $X\act \Q_2$ is $(P_\infty)$ and to assume that $G^X$ is untwisted.

Bartholdi, Cornulier, and Kochloukova considered the exact same action $F\act \Q_2$ and observed it was $(P_\infty)$, see \cite[Section 6]{Bartholdi-Cornulier-Kochloukova15}.
Indeed, for any $m\geq 1$ the stabiliser of a point for $F\act\Q_2^m$ is isomorphic to a direct sum of $F$ and has finitely many orbits. 
Since $F$ is of type $F_\infty$ so does a finite direct sum of it.
If we replace $F$ by $T$, then again the stabiliser of a point for $T\act \Q_2^m$ is a finite direct sum of $F$.

However, for $V$ and $BV$ it is not that immediate to conclude.
Indeed, we have that $\Q_2^m$ has certainly finitely many orbits for the action of $V$ or $BV$ but we do not have a quick argument showing that the point stabilisers are of type $F_\infty$.
One can follow the topological proof showing that $V$ and $BV$ are of type $F_\infty$ and adjust it to deduce that the point stabilisers are of type $F_\infty$ \cite{Brown87,Bux-Fluch-Marschler-Witzel-Zaremsky16}. It is not hard but rather tedious. Especially for $BV$ where the proof is quite long and technical.
A cumbersome but quick way to check the veracity of these statement (i.e~$V\act \Q_2$ and $BV\act \Q_2$ are ($P_\infty$)) is as follows.
Take 
$$M=\Mon\la a,b| ab=ba\ra$$ 
which is a Ore monoid. We have that $\Frac(M)\simeq \Z\oplus \Z$ and $K_M\simeq \Z.$ In particular, $K_M$ has infinite abelianisation. 
Moreover, since $M$ is abelian, we have that $G^X$ is untwisted: it is isomorphic to the untwisted wreath product $\Z\wr_{\Q_2} X$.
We are then in the assumption of the theorem of Bartholdi, Cornulier, and Kochloukova.
Therefore, if $G^V:=\Z\wr_{\Q_2} V$ and $G^{BV}:=\Z\wr_{\Q_2} BV$ were of type $F_\infty$, then we would be able to conclude that the actions $V\act \Q_2$ and $BV\act\Q_2$ must necessarily be ($P_\infty$).
Now, $G^X$ for $X=V,BV$ are FS groups obtained from the FS category
$$\cF:=\FC\la a,b| a_{1,1}b_{1,2}=b_{1,1}a_{1,2}\ra.$$
The FS presentation has two colours and a single skein relation. Hence, its spine has only three elements (being $a_{1,1},b_{1,1}$ and $a_{1,1}b_{1,2}$). 
We can then conclude using our theorem (which uses Thumann's theorem on operad groups) that $G^X$ is of type $F_\infty$ for $X=V$ and $BV$. We then deduce that $V\act \Q_2$ and $BV\act \Q_2$ are $(P_\infty)$.
We obtain the following.

\begin{theorem}
Consider a Ore HP monoid $M$, its associated FS category $\cF$, and the FS groups $G^X$ with $X=F,T,V,BV$.
Assume that $G^X$ is untwisted.
Fix $m$ which is either a natural number or $\infty$. 
If $\Frac(M)$ is of type $FP_m$ or $F_m$, then so is $G^X$. 
Moreover, we have the converse when $K_M$ has infinite abelianisation.
\end{theorem}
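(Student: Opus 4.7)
The plan is to reduce the statement to the Bartholdi--Cornulier--Kochloukova theorem (Theorem \ref{theo:BCK}) via our structural decomposition. First, by Theorem \ref{theo:FCWP} and the untwisted hypothesis we have an isomorphism $G^X\simeq K_M\wr_{\Q_2}X$ where the wreath product is untwisted. Second, the split exact sequence $1\to K_M\to \Frac(M)\to\Z\to 1$ realises $\Frac(M)$ as an HNN-extension of $K_M$ along the identity twisted by the conjugation-by-$a$ automorphism; since the stable letter pair $(\Z,\Z)$ enjoys the strongest finiteness properties, a standard Bass--Serre / HNN argument shows that $K_M$ and $\Frac(M)$ have the same $FP_m$ and $F_m$ type. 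Hence it suffices to prove that if $K_M$ is of type $FP_m$ (respectively $F_m$), then so is $K_M\wr_{\Q_2}X$, with the converse when $K_M$ has infinite abelianisation.

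With $K_M$ of type $FP_m$ and $X\in\{F,T,V,BV\}$ of type $F_\infty$ (hence of type $FP_m$), Theorem \ref{theo:BCK} applies provided the diagonal action $X\act \Q_2$ satisfies property $(P_m)$ for every $m$, i.e.~for each $1\leq j\leq m$ the action $X\act \Q_2^j$ has finitely many orbits and point-stabilisers of type $FP_{m-j}$ (and analogously for $F_m$). For $X=F$ or $T$ this is straightforward: partitioning by orderings of the coordinates shows finiteness of the orbit set on $\Q_2^j$, and point-stabilisers are isomorphic to finite direct products of copies of $F$, which is of type $F_\infty$.

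The main obstacle is verifying $(P_\infty)$ for $X=V$ and $X=BV$, where point-stabilisers have a more intricate structure; a direct topological analysis would be technical, particularly for $BV$. The plan is to circumvent this by a bootstrap using our earlier spine-based result. I would apply the forest-skein machinery to the test Ore HP monoid $M=\Mon\la a,b\mid ab=ba\ra$. This monoid is abelian, so $G^X$ is untwisted, and we have $\Frac(M)\simeq\Z^2$ and $K_M\simeq\Z$, the latter with infinite abelianisation. Moreover, the associated FS category has a skein presentation with finite spine (three elements: $Y_a$, $Y_b$, and the unique common right-multiple $a_{1,1}b_{1,2}=b_{1,1}a_{1,2}$), so by Thumann's theorem applied through our previous work the groups $G^X$ are of type $F_\infty$ for all four versions $X$. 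Since $K_M=\Z$ has infinite abelianisation, the converse direction of Theorem \ref{theo:BCK} then forces the action $X\act\Q_2$ to be $(P_\infty)$ for $X=V$ and $X=BV$.

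Feeding this verified $(P_\infty)$ property back into the forward direction of Theorem \ref{theo:BCK} for an arbitrary Ore HP monoid $M$ yields the forward implication of the statement, and the converse when $K_M$ has infinite abelianisation is an immediate application of the converse direction of Theorem \ref{theo:BCK}, using again that $K_M$ and $\Frac(M)$ share the same finiteness properties.
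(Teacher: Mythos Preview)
Your proposal is correct and follows essentially the same route as the paper: reduce to Bartholdi--Cornulier--Kochloukova via the wreath-product decomposition, transfer finiteness between $\Frac(M)$ and $K_M$ through the split extension by $\Z$, handle $X=F,T$ directly via stabilisers being products of $F$, and bootstrap the $(P_\infty)$ property for $X=V,BV$ using the test monoid $\Mon\la a,b\mid ab=ba\ra$ together with the finite-spine/Thumann argument and the converse of Theorem~\ref{theo:BCK}. The only minor difference is that you invoke Bass--Serre/HNN machinery to equate the finiteness types of $K_M$ and $\Frac(M)$, whereas the paper simply appeals to $\Z$ being of type $F_\infty$; your justification is if anything more explicit.
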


{\bf Comparison of techniques.}
As mentioned above we previously proved that if $\cF$ has a finite spine, then the associated fraction groups $G^X$ with $X=F,T,V,BV$ are all of type $F_\infty$.
This theorem permits to prove that many FS groups are of type $F_\infty$ regardless of the existence or not of a decomposition in wreath products. 
The proof is entirely topological and relies on the study of the classifying space of $G$ deduced from the nerve of the category $\cF$.
In an article in preparation, Ryan Seelig and the author will demonstrate that this theorem using the spine covers a large family of virtually simple groups that are far from being wreath products \cite{Brothier-Seelig23}.

The result on wreath product has a homological proof. The one direction that interests us (if $M$ has type $FP_m$, then so does $G$) is rather short when compared with the aforementioned topological proofs. 
Moreover, it works for any sufficiently nice action $W\act X$ while here we only consider the very specific action of the Thompson groups on the dyadic rationals. 
However, this approach seems difficult to adapt to other classes of FS groups. 
It is not even clear to us how to adapt the homological proof to {\it twisted} wreath products such as the one appearing in this present article.
Although, we suggest new groups when the homological proof may be adapted at the end of this section.

{\bf A brief sketch of a topological proof.}
It is possible to prove topologically the following statement: if $\Gamma$ is a group of type $F_m$, then so is the possibly twisted wreath product $\Gamma\wr_{\Q_2}X$ for $X=F,T,V$.
Using our structural theorem this implies that if a Ore HP monoid $M$ is of type $F_m$, then the associated FS group 
$G$ (resp.~its $T,V$-versions) is of type $F_m$.
We briefly explain the idea of the proof. It adapts an unpublished strategy due to Tanushevski for similar (but non-isomorphic) groups that has been outlined by Witzel and Zaremsky in \cite[Section 6]{Witzel-Zaremsky18}.
Note that this method works equally in the twisted case.

{\it Another category.}
Fix a pair $(\Gamma,\alpha)$ where $\Gamma$ is a group and $\alpha$ an automorphism of $\Gamma$.
The pair plays the role of $(K_M,\ad(a^{-1}))$ where $M$ is a Ore HP monoid and $a$ a generator of $M$. 
We introduce the category $\cG$ of monochromatic forests but with leaves decorated by elements of $\Gamma$. 
Elements of $\cG$ are classes $[f,g]$ where $f$ is a forest and $g=(g_\ell)_\ell$ a map from the leaves $\ell$ of $f$ to $\Gamma$.
The equivalence relation on pairs $(f,g)$ is defined as follows.
Consider $\ti f$ which is the forest $f$ to which we add a caret say at the $k$-th leaf.
Now, decorate the $k$-th leaf of $\ti f$ by $\alpha(g_k)$ and the $(k+1)$-th by $e$ (the neutral element). Keep the other decorations untouched giving a new tuple $\ti g$.
We declare that $(f,g)$ and $(\ti f,\ti g)$ are in the same class and consider the equivalence relation generated by it. 
We compose as usual unlabelled forests by vertical stacking and for group elements of $\Gamma$ we consider the pointwise multiplication so that $[f,g] \circ [f, h]:=[f,gh]$.
This forms a Ore category $\cG$ with fraction groupoid $\Frac(\cG)$ equal to all $f\circ g\circ q^{-1}$ with $f,q$ forests with same number of leaves, say $n$, and $g\in \Gamma^n$ a decoration of the leaves.
The key observation is that the fraction group $\Frac(\cG,1)$ is isomorphic to the FS group $\Frac(\cF_M,1)$ when $(\Gamma,\alpha)=(K_M,\ad(a^{-1}))$.

{\it A classifying space.}
Let $E\subset \Frac(\cG)$ be the subset of the fraction groupoid of all $t\circ g\circ q^{-1}$ where $t$ is a {\it tree}, $g$ a tuple of group elements of $\Gamma$ and $q$ is a forest.
Consider now three actions on $E$:
$$\Frac(\cG,1)\act E \curvearrowleft \cUF$$
and $$E\curvearrowleft \cH.$$
All three actions are the restriction of the composition of elements of the groupoid $\Frac(\cG)$ to certain subsets.
The fraction group $\Frac(\cG,1)$ corresponds to all $t\circ g\circ s^{-1}$ where both $t$ and $s$ are trees.
The symbol $\cUF$ stands for forests with trivial decoration of the leaves and $\cH$ for the exact opposite: elements $g=I^{\ot n}\circ g\circ I^{\ot n}$ where $g\in \Gamma^n$ and where $I^{\ot n}$ is the trivial forest with $n$ roots and $n$ leaves.
Hence, $\cUF$ corresponds to the category of monochromatic trees and $\cH$ to the groupoid with object the natural number and isotropy groups being direct products of $\Gamma$.
The right action of $\cUF$ provides a partial order on $E$: $x\leq x\circ f$ for $x\in E$ and $f\in\cUF$.
This partial order is invariant under the action of $\Frac(\cG,1)$.
Finally, the right-action of $\cH$ obviously commutes with the left-action of $\Frac(\cG,1)$ and is compatible in a certain sense with the right-action of $\cUF$.
This permits to push the partial order of $E$ on the quotient $E/\cH$.
By taking the geometric realisation of the simplicial complex associated to the partially ordered set $E/\cH$ we obtain a $\Frac(\cG,1)$-space whose orbit space is a classifying space.

{\it Brown's criterion.}
By studying this specific classifying space and following a classical scheme based on Brown's criterion and Bestvina-Brady discrete Morse theory it is possible to conclude \cite{Brown87,Bestvina-Brady97}. We refer the reader to the expository article of Zaremsky where he explains very well this last step when applied to the simpler case of Thompson's group $F$ \cite{Zaremsky21}.

We deduce the following result.

\begin{theorem}\label{theo:F-twist}
Let $M$ be a Ore HP monoid with fraction group $\Frac(M)$ and associated FS groups $G^X$ with $X=F,T,V$.
Let $m$ be a natural number or $\infty$.
If $\Frac(M)$ is of type $F_m$, then so is $G^X$.
\end{theorem}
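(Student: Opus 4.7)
The plan is to follow the topological strategy sketched in the excerpt (adapted from Tanushevski and outlined by Witzel--Zaremsky) and reduce the problem to a Brown-type finiteness criterion. By Theorem \ref{theo:FCWP}, the FS group $G^X$ is isomorphic to the twisted wreath product $K_M \wr_{\Q_2,\tau} X$ with $\tau(v,q) = \ad(a^{-1})^{\log_2(v'(v^{-1}q))}$. Since $X = F,T,V$ is itself of type $F_\infty$ and the split exact sequence $1\to K_M\to\Frac(M)\to\Z\to 1$ implies that $K_M$ and $\Frac(M)$ share the same finiteness properties, it suffices to show that if $\Gamma := K_M$ is of type $F_m$ then the twisted wreath product $\Gamma \wr_{\Q_2,\tau} X$ is of type $F_m$ as well.

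First I would introduce the auxiliary category $\cG = \cG(\Gamma,\alpha)$ with $\alpha = \ad(a^{-1})$ described in the excerpt: elements are equivalence classes $[f,g]$ where $f$ is a monochromatic forest and $g:\Leaf(f)\to\Gamma$, with the expansion relation at the $k$-th leaf $(f_k)_{1,n} \circ [(g_k,e)] = [g_k]$ and the $\alpha$-twist $[f,g] \circ [Y,(h,h')] = [f\circ Y, (g_1,\dots,\alpha(g_k)h, h', \dots, g_n)]$ baked in. Composition is vertical stacking combined with pointwise multiplication, and inheritance of the Ore property from $\cUF\la x\ra$ gives a fraction groupoid $\Frac(\cG)$. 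A direct computation using the formula for the twist in Theorem \ref{theo:FCWP} yields a canonical isomorphism $\Frac(\cG,1) \simeq \Gamma\wr_{\Q_2,\tau}F$, and the $T,V$ analogues are obtained by stacking cyclic permutations or all permutations on top of forests.

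Next I would build a classifying space as suggested. Set $E \subset \Frac(\cG)$ to be the subset of fractions $t\circ g\circ q^{-1}$ with $t$ a tree and $q$ a forest, and consider the three commuting structures on $E$: the left action of $G := \Frac(\cG,1)$, the right action of the monoid of undecorated forests $\cUF$ (giving a $G$-invariant partial order $x \leq x\circ f$), and the right action of the groupoid $\cH$ of pure leaf-decorations. The quotient poset $E/\cH$ inherits the partial order, and taking the geometric realization $|E/\cH|$ of its order complex produces a contractible $G$-CW complex whose isotropy groups are describable: the stabilizer of a cell represented by $[t,g,q]/\cH$ is a finite direct product of copies of $\Gamma$ indexed by leaves of $q$, together with the relevant subgroup of $X$ stabilizing the underlying labelled forest. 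Contractibility follows as usual by exhibiting a $\Gamma$-contracting homotopy using common right-multiples via Ore's property.

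Finally I would apply Brown's criterion together with Bestvina--Brady discrete Morse theory, using the height function $h([t,g,q]) = |\Leaf(q)|$. The verification splits into three points: (i) the filtration $|E/\cH|_{\leq n}$ is $G$-cocompact with finitely many $G$-orbits of cells in each dimension; (ii) cell stabilizers are of type $F_m$, which follows from $\Gamma$ being $F_m$, from $X$ and its relevant parabolic subgroups being $F_\infty$, and from the closure of $F_m$ under finite direct products and extensions by $F_\infty$ groups; and (iii) the descending links become $(m-1)$-connected as $n\to\infty$. The main obstacle is (iii): the descending link at a cell of height $n$ is a ``matching complex''-type object decorated by $\Gamma$, and one must show the decoration by $\Gamma$ does not destroy the high connectivity already known for the Stein--Farley descending links of $F,T,V$ \cite{Bux-Fluch-Marschler-Witzel-Zaremsky16}. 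Because the $\cH$-quotient collapses the $\Gamma$-factors at all leaves of the complementary forest, the descending link is in fact $G$-equivariantly homotopy equivalent to the undecorated one, so the classical connectivity estimates transfer; Brown's criterion then yields $F_m$ for $G^X$ with $X = F,T,V$.
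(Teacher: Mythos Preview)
Your proposal follows essentially the same approach as the paper: the paper does not give a formal proof of this theorem but rather the preceding ``brief sketch of a topological proof,'' which introduces the decorated-forest category $\cG$, the poset $E/\cH$, and then defers to Brown's criterion with Bestvina--Brady Morse theory exactly as you do. Your write-up expands the sketch in a few places (the height function, the form of the descending links), but the strategy, the auxiliary category, the three commuting actions, and the final reduction are all the same as in the paper's outline.
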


As far as we know, the unpublished theorem of Tanushevski proves the following.
The group $C(\cC,\Ga)\rtimes V$ is of type $F_m$ when $\Ga$ is (where $C(\cC,\Ga)$ stands for the group of continuous function from the Cantor space $\cC$ to $\Ga$). The original statement is quite different but we previously observed that his groups decompose in this specific form, see \cite{Brothier21}.
Above, we have briefly explained the argument not for $C(\cC,\Ga)\rtimes V$ but for the twisted wreath product $\Ga\wr_{\Q_2} F$.
It should be noted that these two classes of groups do not intersect as proved in \cite[Section 2.5]{Brothier21}.
It is not hard to go from the $F$-case to the $T$ and $V$-cases. 
It sounds reasonable to expect that the topological proof of Tanushevski can be adapted in the $BV$-case. Although, this would be highly technical and would demand its own detailed proof.
For this reason we left this case out of the statement of the theorem.

At the moment we don't know how to adapt the powerful homological techniques of Bartholdi, Cornulier, and Kochloukova to other FS groups.
Although, a first place to start would be to consider groups of the the form $C(Y,\Ga)\rtimes W$ constructed from a group $\Ga$ and an action by homeomorphisms $W\act Y$ on a topological space $Y$.

\subsection{Orderability}\label{sec:orderability}
In this short section we observe that many FS groups are orderable and bi-orderable. 
Thompson group $F$ is defined as a piecewise affine group of homeomorphism of the unit interval. In particular, it acts in an order-preserving way on the real line (which is equivalent to be orderable for countable groups). 
Most Thompson-like groups are built in a dynamical way by acting nicely on the unit interval or a similar structure. 
Our groups are rather different and are built {\it diagrammatically} rather than {\it dynamically}. We note here that not all of them will be orderable and thus will be very far from acting nicely on the real line.

{\bf Orderability.}
A left-ordered group is a pair $(\Ga,\leq)$ where $\Ga$ is a group and $\leq$ is a total order on $\Ga$ that is left-invariant: $h\leq k$ implies $gh\leq gk$ for all $h,k,g\in \Ga$. We say that $\leq$ is a left-order and $\Ga$ is left-orderable if it admits a left-order.
Similarly we define right-orders and say that $\leq$ is a bi-order if it left and right invariant.
A positive cone of $\Ga$ is a submonoid $P\subset \Ga$ satisfying that $P\cup P^{-1}=\Ga, P\cap P^{-1}=\{e\}$.
A left-order $\leq$ defines a positive cone $\{g\in\Ga:\ e\leq g\}$ and conversely a positive cone $P$ defines a left-order: $g\leq_P h$ if and only if $g^{-1}h\in P$.
For bi-orderability we further assume that $P$ is closed under conjugation.

Observe that bi-orderable implies left-orderable which itself implies torsion-free.

{\bf Permanence properties.}
By restriction of the order we have that a subgroup of a left-orderable group is left-orderable.
If $$1\to \Xi\to \Ga\to \La\to 1$$ is a split exact sequence, then $\Ga$ is left (resp.~bi-orderable) if and only if $\Xi$ and $\La$ are.
Indeed, one implication is obtained by taking the union of the positive cones of $\Xi$ and $\La$ giving a positive cone of $\Ga$. For the reverse implication observe that both $\Xi$ and $\La$ are subgroups of $\Ga$ since the exact sequence splits.

{\bf Orderable FS groups.}
Fix a Ore HP monoid $M$ and write $G^X$ for the associated $X$-FS group with $X=F,T,V,BV$.
We investigate when $G^X$ is left or bi-orderable.
Note that $T,V$ are not torsion-free and thus not left-orderable.
Moreover, since $X$ embeds in any $X$-forest group we know that $G^T$ and $G^V$ are never left-orderable. 
Hence, it only makes sense to consider $G$ and $G^{BV}.$

Thompson group $F$ is bi-orderable: a positive cone is given by all $g\in F$ having first nontrivial slope larger than $1$. (This order is obtained by pulling back the order of $[0,1]$ on which $F$ acts faithfully and in an order-preserving way.)
Moreover, the braided Thompson group $BV$ is left-orderable but not bi-orderable as proved first by Burillo-Gonzales-Meneses and later by Ishida using Witzel-Zaremsky's cloning systems \cite{Burillo-Gonzalez-Meneses08, Ishida18}.

Consider now our monoid $M$.
We have a split exact sequence
$$1\to K_M\to \Frac(M)\to \Z\to 1.$$
Since $\Z$ is clearly bi-orderable we have that $\Frac(M)$ is left or bi-orderable if and only if $K_M$ is.
Similarly, if $\cF$ is the FS category associated to $M$ and $G^X$ the associated FS group with $X=F,T,V,BV$, then we have a split exact sequence:
$$1\to \oplus_{\Q_2} K_M\to G^X\to X\to 1.$$
We deduce the following result.

\begin{theorem}
Let $M$ be a Ore HP monoid with fraction group $\Frac(M)$ and write $G, G^{BV}$ for the associated forest groups.
We have that:
\begin{enumerate}
\item $G$ is left or bi-orderable if and only if $\Frac(M)$ is;
\item $G^{BV}$ is left-orderable if and only if $\Frac(M)$ is.
\end{enumerate}
\end{theorem}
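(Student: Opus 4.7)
Both equivalences follow from the structural decomposition of Theorem \ref{theo:FCWP} combined with the standard permanence properties of left- and bi-orderability that have already been recorded just above the statement. The two split short exact sequences I will shuttle orderability along are
$$1\to K_M\to \Frac(M)\to \Z\to 1 \quad\text{and}\quad 1\to \oplus_{\Q_2}K_M\to G^X\to X\to 1,$$
for $X\in\{F,BV\}$, where the latter may be twisted according to Theorem \ref{theo:FCWP}. Recall also that $\Z$ and $F$ are bi-orderable, whereas $BV$ is only left-orderable by Burillo--Gonz\'alez-Meneses and Ishida.

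For the ``only if'' directions I would observe that $K_M$ embeds in $G^X$ as the coordinate copy at $0\in\Q_2$, so the relevant kind of orderability of $G^X$ restricts to $K_M$; then the first sequence together with bi-orderability of $\Z$ transfers it to $\Frac(M)$. For the ``if'' directions I would start from a left- or bi-order on $\Frac(M)$, restrict it to $K_M$, and propagate it to a lexicographic order on $\oplus_{\Q_2}K_M$ via the natural linear order on $\Q_2\subset[0,1)$: a nonzero $h$ is positive iff $h(\min\supp h)>0$. This is left-invariant, and also right-invariant provided the given order on $K_M$ is. I would then extend it to $G^X$ through the split sequence.

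The only substantive check, and what I expect to be the main obstacle, is the compatibility of the twisted $F$-action with the lex bi-order in the bi-ordered case of part (1). It splits into two observations. First, $F\act \Q_2$ is by increasing bijections, so the support and its minimum are transformed equivariantly. Second, by Theorem \ref{theo:FCWP} the fibrewise twist is conjugation by a power of the generator $a\in\Frac(M)$; since inner automorphisms of a bi-ordered group preserve its bi-order, and the bi-order used on $K_M$ is the restriction of a bi-order on $\Frac(M)$, the twist acts by order-preserving automorphisms on each fibre. Together these yield an $F$-invariant bi-order on $\oplus_{\Q_2}K_M$, which extends to a bi-order on $G$. Part (2) requires no such compatibility check, since left-orderability is preserved by arbitrary semidirect products of left-orderable groups, and $BV$ is left-orderable.
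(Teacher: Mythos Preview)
Your proposal is correct and follows the same route as the paper: pass orderability back and forth through the two split short exact sequences $1\to K_M\to\Frac(M)\to\Z\to1$ and $1\to\oplus_{\Q_2}K_M\to G^X\to X\to1$, using that $\Z$ and $F$ are bi-orderable and $BV$ is left-orderable. Your explicit construction of the lexicographic order on $\oplus_{\Q_2}K_M$ and your verification that the twisted $F$-action preserves it (via the order-preserving action on $\Q_2$ and the fact that the fibrewise twist is inner in $\Frac(M)$) actually supply a detail the paper's permanence-property statement leaves implicit, since bi-orderability does not in general pass through arbitrary split extensions without such a compatibility check.
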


\section{The canonical action of a forest-skein group}\label{sec:canonical}

\subsection{Definition of the space and the action}\label{sec:def-Q}

We recall a construction appearing in \cite[Section 6]{Brothier22-FS}.
Given a FS category $\cF$ we consider the set of all pairs $(t,\ell)$ where $t$ is a tree of $\cF$ and $\ell$ a distinguished leaf of $t$.
We mod out this space by the equivalence relation $\sim$ generated by
$$(t,\ell)\sim (t\circ f, \ell^f)$$
where $f$ is a forest of $\cF$ (composable with $t$) and $\ell^f$ is the first leaf of the tree of $f$ that is attached to $\ell$.
Hence, if $f$ has a single caret at the $\ell$-th root, then $\ell^f$ is the left-leaf of this caret.
We obtain a space $Q=Q_\cF$ with classes denoted $[t,f]$.

Assume now that $\cF$ is a Ore category and denote by $G^F,G^T,G^V$ the associated FS groups.
There is a unique total order $\leq$ on $Q$ satisfying that $[t,k]\leq [t,\ell]$ if $k\leq \ell$. 
We have an action $\al:G^V\act Q$ satisfying that $G^F$ acts in an order-preserving way. 
Moreover, the restriction of $\al$ to $G^T$ is conjugated to the homogeneous action $G^T\act G^T/G^F$.
The action is described by the formula:
$$\al([s \circ \pi,t]) [t,\ell]:= [s, \pi(\ell)]$$
where $x,y,t$ are trees, $\ell$ a leaf of $t$ and $\pi$ a permutation of the leaves of $t$.

The $G^V$-space $Q$ is important for studying FS groups. It is a replacement of the usual action of the Thompson groups on the dyadic rationals. 
It was used to obtain a topological finiteness permanence property \cite{Brothier22-FS}.
Moreover, in a work in progress of Ryan Seelig and the author we use $Q$ for proving that certain FS groups are simple \cite{Brothier-Seelig23}.
This later uses in a crucial way that $G\act Q$ is faithful for certain $G$ (which implies that $G$ is left-orderable).
When first introduced, the author naively believed that the action $G\act Q$ was {\it always} faithful regardless of which FS group $G$ is chosen. 
We will see that this is far from being the case.

\subsection{A non-faithful action}

Consider a Ore forest-skein category $\cF$ with a shape-preserving skein relation and fix a colour $a$ of $\cF$.
Following Section \ref{sec:def-Q}, we obtain a group isomorphism
$$\theta:G^V\to K\rtimes V$$
such that $K$ is the subgroup of all $[t,t']\in G$ satisfying that $t$ and $t'$ have same shape and where the action $V\act K$ is given by
$$v\cdot k:= C_a(v) k C_a(v)^{-1}$$
where $C_a:V\to G^V$ is the map consisting in colouring all interior vertices by $a$.

We now describe $G^T\act Q$ by describing $Q$ via the homogeneous space $G^T/G^F$.
Observe that $\theta(G^T)=K\rtimes T$ and $\theta(G)=K\rtimes F$.
We deduce that the action $G^T\act G^T/G^F$ is conjugated to the action 
$$K\rtimes T\act T/F,\ kg \cdot xF:= gxF.$$ 
Hence, $T$ acts on $T/F$ in the obvious homogeneous way and $K$ acts trivially.

Here is another way to understand why $K$ acts trivially. 
Describe $Q$ as the quotient space of pairs $[t,\ell]$ as above.
Consider any tree $t$ of $\cF$ and a leaf $\ell$ of it.
Now, we consider $C_a(t)$ the tree with same shape than $t$ but with all vertex coloured by $a$.
Since $\cF$ satisfies Ore's property there exists some forests $f,h$ so that $t\circ f = C_a(t)\circ h$.
At the level of $Q$ we deduce that 
$$(t,\ell) \sim (t\circ f, \ell^f) = (C_a(t)\circ h, \ell^f).$$
Observe that $\ell^f = \ell^h$ since $t$ and $C_a(t)$ have same shape.
This implies that 
$$(C_a(t)\circ h, \ell^f) = (C_a(t)\circ h, \ell^h) \sim (C_a(t),\ell).$$
Therefore, $$[t,\ell]=[C_a(t),\ell].$$ 
This means that $[t,\ell]$ only remembers the shape of $t$ and not its colouring.
Hence, $Q$ is in bijection with the set of classes $[s,\ell]$ with $s$ a {\it monochromatic} tree and $\ell$ a leaf of $s$.
This later is in bijection with $T/F$ itself in bijection with the dyadic rationals $\Q_2:=\Z[1/2]\cap [0,1)$. 
We deduce a $G^V$-equivariant bijection 
$$\theta:Q\to \Q_2$$
for the obvious action $$G^V\onto V\to \Aut(\Q_2)$$ with second arrow being the classical action of $V\act \Q_2$.

In particular we obtain the following statement.

\begin{proposition}
Consider a Ore FS category $\cF$ with a shape-preserving skein presentation. 
Denote by $G^V$ its associated $V$-group and by $\alpha:G^V\act Q$ the action described in Section \ref{sec:def-Q}.
If $D:G^V\to V$ is the decolouring map, then $\ker(\al)=\ker(D).$
\end{proposition}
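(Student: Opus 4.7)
The proposition is essentially a direct consequence of the identifications established immediately above its statement, combined with the classical fact that Thompson's group $V$ acts faithfully on $\Q_2$. Here is the plan.

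First, I would invoke the $G^V$-equivariant bijection $\theta : Q \to \Q_2$ constructed in the discussion preceding the proposition. By that construction, the action $\alpha : G^V \act Q$ is conjugated via $\theta$ to the composite action
$$G^V \xrightarrow{\ D\ } V \longrightarrow \Aut(\Q_2),$$
where the second arrow is the classical action of $V$ on the dyadic rationals of the unit torus. In particular, $\alpha$ factors through $D$.

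From this factorisation the inclusion $\ker(D) \subseteq \ker(\alpha)$ is immediate: any $g \in G^V$ with $D(g) = e$ necessarily acts trivially on $\Q_2$ and therefore, via $\theta^{-1}$, trivially on $Q$.

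For the reverse inclusion, I would argue as follows. Suppose $g \in \ker(\alpha)$. Transporting through $\theta$, the element $D(g) \in V$ fixes every point of $\Q_2$. It is a standard and elementary fact that $V$ acts faithfully on $\Q_2$ (indeed, this action is already transitive and faithful on the finite subset $\Q_2$, as can be seen directly from Brown's tree-pair description of $V$). Hence $D(g) = e$, which gives $\ker(\alpha) \subseteq \ker(D)$ and concludes the proof.

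There is no real obstacle here: once the preceding paragraphs have identified $Q$ with $\Q_2$ and the action with the decolouring composed with the standard $V$-action, the proposition reduces entirely to faithfulness of $V \act \Q_2$. The only minor point worth being careful about is that the bijection $\theta$ is genuinely $G^V$-equivariant (and not merely $V$-equivariant after quotienting), but this is exactly what the computation $[t,\ell] = [C_a(t),\ell]$ preceding the proposition establishes.
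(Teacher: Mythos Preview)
Your proposal is correct and follows exactly the approach of the paper: the proposition is stated there as an immediate consequence (``In particular we obtain the following statement'') of the preceding discussion, which establishes the $G^V$-equivariant bijection $\theta:Q\to\Q_2$ intertwining $\alpha$ with $G^V\xrightarrow{D}V\to\Aut(\Q_2)$, so that $\ker(\alpha)=\ker(D)$ reduces to the faithfulness of $V\act\Q_2$. One small slip: $\Q_2$ is not a ``finite subset''; you presumably meant that $V$ acts faithfully on this countable dense set, which is standard.
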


Note that if $M$ is a Ore HP monoid and $\cF$ the associated FS category, then $\cF$ is in particular shape-preserving. Hence, the associated group $G^V$ satisfies the assumption of above.
Moreover, we obtain that $G^V\simeq \ker(D)\rtimes V$ where $\ker(D)\simeq\oplus_{\Q_2}K_M$.
Therefore, the normal subgroup $\oplus_{\Q_2}K_M$ always acts trivially on the space $Q$ and is equal to the kernel of the action $\alpha$.

\section{Examples}\label{sec:examples}
In this section we construct various examples of FS groups using monoids. 
This produces a bank of examples and counterexamples for a number of properties. 
In all these section $M$ denotes a HP monoid, $\cF_M$ the associated FS category, and $G^X$ (when they exist) the associated  FS groups where $X=F,T,V,BV$. 
As usual we write $G$ for $G^F$ and $K_M$ for the kernel of the length map $\Frac(M)\to \Z$ so that $G^X\simeq K_M\wr_{\Q_2}X.$

\subsection{Properties shared by all forest groups}
We remind the reader that  any FS group $G^X$ contains a copy of $X$ for $X=F,T,V,BV$, see \cite[Section 3.3]{Brothier22-FS}.
This implies for instance that $G^X$ is not elementary amenable, has exponential growth, and infinite geometric dimension (every classifying space is infinite dimensional).
Moreover, any FS group has trivial first $\ell^2$-Betti number, see \cite[Section 5]{Brothier22-FS}.

\subsection{Properties shared by all shape-preserving forest groups}
As mentioned earlier if $G^X$ is the FS group of a shape-preserving FS category, then $X$ is a quotient of $G^X$ for $X=F,T,V,BV$.
Note that all three groups $F,T,V$ have the Haagerup property and thus don't have Kazhdan property (T).
Since this latter property is closed under taking quotient we deduce that $BV$ does not have Kazhdan property (T) and none of the $G^X$ with $X=F,T,V,BV$.

\subsection{Torsion and center}
Thompson group $F$ is torsion-free, is ICC (i.e~has infinite conjugacy classes), and has a trivial centre. 
Consider $$M=\Mon\la a,b| aa=bb, ab=ba\ra$$ which is a Ore HP monoid. 
We have that $\Frac(M)\simeq \Z\oplus\Z/2\Z$ and $K_M\simeq \Z/2\Z.$
Therefore, $G^X\simeq \Z/2\Z\wr_{\Q_2} X.$
In particular, $G$ has torsion and has a nontrivial center isomorphic to $\Z/2\Z$ (and in particular is not ICC).
Similar construction provides infinite center: take for instance $\ti M:=\Mon\la a,b| ab=ba\ra$ noting that $K_{\ti M}\simeq \Z$ and thus giving that $Z(G_{\ti M})\simeq \Z.$

\subsection{Containment of a free group and amenability}
A famous result of Brin and Squier proves that the free group $\mathbb F_2$ in two generators does not embed inside $F$ \cite{Brin-Squier85}.
Consider the braid monoid over $n\geq 2$ strands
$$B_n^+:=\Mon\langle a_1,\cdots,a_{n-1} | a_j a_{j+1} a_j = a_{j+1} a_j a_{j+1},\ a_k a_m = a_m a_k, 1\leq j\leq n-2, |k-m|\geq 2\rangle.$$
This is a Ore HP monoid with fraction group $B_n$: the Artin braid group. 
For $n\geq 3$ note that $B_n$ contains a copy of $\mathbb F_2$ and so does the kernel of the length map $K_n$.
This implies that the associated FS group $G_n$ (the $F$-version) contains a copy of $K_n$ and thus of $\mathbb F_2$.
In particular, there exist $F$-versions of FS groups that are not amenable.

\subsection{Generators}
We have proven that if $\cF$ is a Ore FS category with a finite FS presentation, then $G$ is a finitely presented group. We even gave a bound on the number of generators and relations of $G$ in terms of the number of colours and skein relations.
Here is an easy example showing that the converse does not hold.
Consider the Thompson monoid with its infinite presentation:
$$F^+:=\la x_1,x_2,\cdots| x_qx_j=x_j x_{q+1}, 1\leq j<q\ra.$$
It is of course a Ore HP monoid with fraction group the Thompson group $F$.
The associated forest category $\cF_{F^+}$ has infinitely many colours and skein relations.
However, the FS group $G$ is finitely presented and in fact of type $F_\infty$ since $F$ is (via Theorem \ref{lettercor} item 3).

\subsection{Spine}
Consider again $$M=\la a,b| aa=bb, ab=ba\ra$$ with associated $\cF_M$ and $G^X$.
Note that $M$ is abelian implying that $G^X$ is untwisted.
Using the result of Bartholdi, Cornulier, and Kochloukova we deduce that $G^X\simeq\Z/2\Z\wr_{\Q_2} X$ is of type $F_\infty$ since $\Z/2\Z$ is \cite{Bartholdi-Cornulier-Kochloukova15}.
However, note that the spine of $M$ is infinite and thus so is the spine of $\cF_M$ preventing us to use our topological result of \cite[Section 7]{Brothier22-FS}.
It is rather surprising that such an easy group to apprehend is resisting to our previous theorem on general FS groups.

The monoid $M$ is an example of a {\it thin} monoid in the sense of Dehornoy \cite{Dehornoy02}.
It is not a Garside monoid but still satisfies nice properties concerning existence of common multiples.
Dehornoy provided two additional examples of thin monoids (that were Ore and HP) that happened to have {\it infinite} spines.
They are:
$$\Mon\langle a,b,c | a^2=b^2=c^2, ab=bc=ca, ac=ba=cb\rangle$$
and
$$\Mon\langle a,b,c| ac=ca=b^2, ab=bc, cb=ba\rangle.$$
It would be interesting to determine which properties their fraction groups and associated FS groups satisfy.

\subsection{Haagerup property}
All Thompson's groups $F,T,V$ are known to satisfy the Haagerup property (i.e.~Gromov's a-T-menability) by Farley \cite{Farley03}.
Moreover, a number of Thompson-like groups share this property like Guba-Sapir's diagram groups over semigroup presentations (and their annular and $V$-versions called braided) \cite{Guba-Sapir97} as proved by Farley \cite{Farley05}.
The same is true for the class of finite similarity structure groups of Hughes \cite{Hughes09}.
Note that these two classes of groups have a large intersection, see \cite{Farley-Hughes17}.

In general, this kind of result is proven by constructing a rather natural CAT(0) cubical complex on which the group acts properly by isometries. 
The complex usually comes from the universal cover of a classifying space of the group.
We could not find any similar structure to the nerve of a FS category (which is a universal cover of a classifying space) or any complex derived from it.
We now seriously doubt that such a CAT(0) cubical complex exists for each FS group.
Indeed, this would imply that if $M$ is a Ore HP monoid, then its fraction group $\Frac(M)$ has the Haagerup property. 
In particular, this would imply that all braid groups have the Haagerup property which is still unknown today.

Although, we still don't know any example of Ore HP monoid $M$ for which $\Frac(M)$ has been proven to not have the Haagerup property nor any FS groups.

\subsection{Finiteness properties}
The three Thompson groups $F,T,V$ and the braided version $BV$ are of type $F_\infty$.
Many generalisations of these groups share this property.
We have shown that in many cases $\Frac(M)$ and $G^X$ share the same finiteness properties.
However, we still don't know if all finitely presented FS groups are automatically of type $F_\infty$.
In particular, we don't have any example of a Ore HP monoid $M$ with $\Frac(M)$ finitely presented not of type $F_\infty$.

\subsection{Orderability}
As previously observed all $G^T$ and $G^V$ are not left-orderable since they have some torsion.
Now, we can observe that there exist some FS groups that are bi-orderable (take $G=F$ or $G=\Frac(\cF_M,1)$ with $M=\N\times\N$ and usual presentation), left-orderable but not bi-orderable (take $M$ equal to a braid group over $n\geq 3$ strands with its usual presentation), or not left-orderable by taking $M=\Mon\la a,b| ab=ba,aa=bb\ra$ since the associated FS group has torsion.
Taking the same monoids we obtain some $BV$-FS groups that are left-orderable and other that are not.

\subsection{Derived subgroup and simplicity}
The Thompson groups and related groups are famous because they produce simple groups.
Indeed, the derived subgroup $\Delta(F)$ of $F$ is simple and $T,V$ are both simple.
Ryan Seelig and the authors are studying a large class of FS groups whose derived groups are simple (and in particular the $T$ and $V$-versions are virtually simple) \cite{Brothier-Seelig23}.
A shape-preserving Ore FS category produces groups of the form $G^X\simeq \ker(D)\rtimes X$ where $D$ is the decolouring map.
Hence, it is far from being simple or having simple derived subgroups.
If $\cF$ is constructed from a monoid, then $G^X$ is a wreath product with $\ker(D)\simeq \oplus_{\Q_2}K_M$.
Using this wreath product decomposition it is now easy to construct FS groups for which each subgroup of the derived series is not simple.

\subsection{Iteration}
We have proven that if $M^{(1)}$ is a Ore HP monoid, then the group $G^{(2)}:=\Frac(\cF^{(2)},1)$ is isomorphic to the fraction group of the FS monoid $\Frac(\cF^{(2)}_\infty)$ where $\cF^{(2)}$ is the FS category associated to $M^{(1)}$.
We have observed that $\cF^{(2)}_\infty$ is itself a Ore HP monoid.
(Recall that if $\cF$ is an FS category, then $\cF_\infty$ is the associated FS {\it monoid} made of forests with infinitely many roots but having only finitely many nontrivial trees, see \cite[Section 1.3.2]{Brothier22-FS} for details.)
Hence, we can reiterate the process by considering $\cF^{(3)}$: the FS category associated to the Ore HP monoid $M^{(2)}:=\cF^{(2)}_\infty$.
We obtain a chain of groups $G^{(n)}$.
They can be expressed by iterated wreath products:
$$G^{(n+1)}\simeq K^{(n)}\wr_{\Q_2}F$$
where $K^{(n)}$ is the kernel of the length morphism $G^{(n)}\onto\Z.$
Although, these groups can also be described using the group presentation innerated from a skein presentation.
For instance, consider $M^{(1)}:=F^+$ the Thompson monoid with usual infinite presentation
$$\Mon\la x_n, n\geq 1| x_qx_j=x_jx_{q+1}, 1\leq j<q\ra.$$
We obtain that $G^{(2)}$ has a presentation with generators $x_{i,j}$ with two indices $i,j\geq 1$ satisfying the following Thompson-like relations:
$$x_{q,n} x_{j,n} = x_{j,n} x_{q+1,n} \text{ for all } n\geq 1 \text{ and all } 1\leq j<q$$
and 
$$x_{k,q} x_{r,j} = x_{r,j} x_{k,q+1} \text{ for all } k,r\geq 1 \text{ and all } 1\leq j<q.$$
Similarly, $G^{(n)}$ admits a generator set of $x_{i_1,\cdots,i_n}$ with $i_1,\cdots,i_n\geq 1$ satisfying similar Thompson-like relations.


\newcommand{\etalchar}[1]{$^{#1}$}

\end{document}